
\documentclass[11pt]{amsart}

\usepackage{amsmath}
\usepackage{amstext}
\usepackage{amssymb}
\usepackage{amsthm}
\usepackage{enumerate}
\usepackage{bbm} 




\addtolength{\textwidth}{3.5cm} \addtolength{\oddsidemargin}{-1.75cm}
\addtolength{\evensidemargin}{-1.75cm}

\setlength{\topmargin}{0.0in}
\setlength{\textheight}{8.5in}

\setlength{\evensidemargin}{0.0in}
\setlength{\oddsidemargin}{0.0in}
\setlength{\textwidth}{6.5in}



\makeatletter
\def\imod#1{\allowbreak\mkern10mu({\operator@font mod}\,\,#1)}
\makeatother


\newtheorem{theorem}{Theorem}[section]

\newtheorem{lemma}[theorem]{Lemma}
\newtheorem{claim}[theorem]{Claim}

\newtheorem*{clm}{Claim}

\theoremstyle{definition}
\newtheorem{definition}[theorem]{Definition}

\newtheorem{corollary}[theorem]{Corollary}
\theoremstyle{remark}
\newtheorem{remark}[theorem]{Remark}

\theoremstyle{remark}

\numberwithin{equation}{section}


\hyphenation{na-tu-ral}

\hyphenation{e-qui-va-lence}


    \DeclareMathOperator{\dom}{dom}

        \DeclareMathOperator{\WO}{WO}


    \newcommand{\forces}{\Vdash}

    \newcommand{\card}[1]{\vert #1\vert}
    \newcommand{\seq}[1]{\langle #1 \rangle }


\def\Q{{\mathbb Q}}

\def\BbM{{\mathbb M}}


\begin{document}

\title[Definable Towers]{Definable Towers}



\author{Vera Fischer}

\address{Kurt G\"odel Research Center, University of Vienna, W\"ahringer Strasse 25, 1090 Vienna, Austria}
\email{vfischer@logic.univie.ac.at}

\author{Jonathan Schilhan}

\address{Kurt G\"odel Research Center, University of Vienna, W\"ahringer Strasse 25, 1090 Vienna, Austria}
\email{jonathan.schilhan@univie.ac.at}

\thanks{\emph{Acknowledgments:} The authors would like to thank the Austrian Science Fund, FWF, for generous support through START-Project Y1012-N35.}



\keywords{towers, preservation of towers, definability, descriptive set theory, the constructible universe, forcing, mad families, independent families}


\begin{abstract} 
We study the definability of maximal towers and of inextendible linearly ordered towers (ilt's), a notion that is more general than that of a maximal tower. We show that there is, in the constructible universe, a $\Pi^1_1$ definable maximal tower that is indestructible by any proper Suslin poset. We prove that the existence of a $\Sigma^1_2$ ilt implies that $\omega_1 = \omega_1^L$. Moreover we show that analogous results hold for other combinatorial families of reals. We prove that there is no ilt in Solovay's model. And finally we show that the existence of a $\Sigma^1_2$ ilt is equivalent to that of a $\Pi^1_1$ maximal tower.  
\end{abstract}

\maketitle

\section{Introduction}

The definability of various combinatorial families of reals has become a very active research area in set theory over the last decades. Among these families we find maximal almost disjoint families (\cite{BK1},\cite{FFK1},\cite{ADM1},\cite{RS1},\cite{AT1},\cite{AT2}), maximal cofinitary groups (\cite{FST1},\cite{HS1},\cite{BAK1}), maximal eventually different families (\cite{FS1},\cite{DS1}), maximal families of orthogonal measures (\cite{FT1}) or maximal independent families (\cite{BFK1}), just to name a few. One of the cornerstones of the theory has been laid by A. Miller in his seminal paper \cite{AM1} in which he shows, among other things, how to construct coanalytic families of various such kinds in the constructible universe. We continue along these lines by studying the definablity of \emph{maximal towers} and \emph{inextendible linearly ordered towers} (abbreviated as \emph{ilt}). 

A tower will be, as usual, a set $X \subseteq [\omega]^\omega$ which is well ordered with respect to reverse almost inclusion, i.e. the relation $x \leq y$ given by $\exists n \in \omega (y \setminus n \subseteq x )$. A tower is maximal if it has no pseudointersection. In the definition of a linearly ordered tower we drop the requirement that the order is well-founded. An inextendible linearly ordered tower is one that has no top-extension, i.e. has no pseudointersection.

The questions that we will ask and answer for towers are inspired to a great extent by those that appeared in relation to mad families. Recall that two sets $x, y \in [\omega]^\omega$ are called almost disjoint whenever $x \cap y$ is finite. An almost disjoint family is a subset of $[\omega]^\omega$ all of whose elements are pairwise almost disjoint. A maximal almost disjoint family (mad family) is an infinite almost disjoint family that cannot be properly extended to a larger one. For mad families, the story begins with Mathias' influential work \cite{ADM1} in which he showed that mad families cannot be analytic. 

In Section 2 we will show that neither maximal towers nor ilt's can be analytic (Theorem~\ref{thm:nosigma11towers} and Theorem~\ref{thm:noanalyticmlt}). On the other hand we prove in Section 3, as a main result, that $\Pi^1_1$ maximal towers do exist in $L$ (Theorem~\ref{thm:coanalytictower}), using the technique developped by Miller in \cite{AM1}.

Another topic that has been studied extensively for mad families is the existence of $\Pi^1_1$ examples in various forcing extensions. For instance it has been shown in \cite{BK1} that there is a $\Pi^1_1$ mad family in a model obtained by adding Hechler reals. In Section 4 we will outshadow all these questions for towers by showing that in $L$ there is a $\Pi^1_1$ maximal tower that is indestructible by any proper Suslin partial order (Theorem~\ref{thm:indestructibletower}). 

Section 5 deals with the value of $\omega_1$ in models where ilt's can have simple definitions. As a main result we show that the existence of a $\Sigma^1_2(x)$ ilt implies that $\omega_1 = \omega_1^{L[x]}$ (Theorem~\ref{thm:omega1}). The same has been shown for mad families in \cite{AT1}. Using similar ideas we show that this holds analogously for maximal independent families, Hamel bases and ultrafilters (Theorem~\ref{thm:independent},~\ref{thm:hamel} and ~\ref{thm:ultrafilter}). In \cite{BK1} Brendle and Khomskii ask whether there is some notion of transcendence over $L$ that is equivalent to the non-existence of a $\Pi^1_1$ mad family. The same question can be asked for other families and our observations contribute to this question by giving a sufficient condition of this kind.   

In Section 6 we show that there is no ilt in Solovay's model (Theorem~\ref{thm:solovay}). For mad families this was a long standing open question first asked by Mathias in \cite{ADM1} and solved by T\"ornquist in \cite{AT1}.

In Section 7 we show that the existence of a $\Sigma^1_2$ ilt is equivalent to that of a $\Pi^1_1$ ilt which is equivalent to that of a $\Pi^1_1$ tower (Theorem~\ref{thm:bigequivalence}). This theorem fits into a series of results stating that we can canonicaly construct $\Pi^1_1$ objects from given $\Sigma^1_2$ ones. For mad families this was shown in \cite{AT2}. For maximal independent families see \cite{BFK1} and for maximal eventually different families see \cite{FS1}.  

We will always stress the difference between lightface ($\Pi^1_1, \Sigma^1_1, \Sigma^1_2$) and boldface ($\mathbf{\Pi}^1_1,\mathbf{\Sigma}^1_1, \mathbf{\Pi}^1_2$) definitions as well as definitions relative to a fixed real parameter ($\Pi^1_1(x), \Sigma^1_1(x), \Sigma^1_2(x)$) to stay as general as possible.

\section{Towers and Definability}

\begin{definition}\label{max_tower}
 A tower is a set $X \subseteq [\omega]^\omega$ which is well ordered with respect to the relation defined by $x \leq y$ iff $y \subseteq^* x$. It is called maximal if it cannot be further extended, i.e. it has no pseudointersection.
\end{definition}

\begin{theorem}
\label{thm:nosigma11towers}
A tower contains no (uncountable) perfect set, i.e. is thin. In particular there is no $\mathbf{\Sigma}^1_1$ maximal tower. 
\end{theorem}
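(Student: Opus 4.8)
The plan is to obtain the second (``in particular'') assertion from the first together with two classical facts, and to establish thinness by a rank argument via the Kunen--Martin theorem.

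I would first reduce the non-existence of a $\mathbf{\Sigma}^1_1$ maximal tower to thinness. Note that a maximal tower is necessarily uncountable: if $X=\{x_n:n<\omega\}$ were a countable tower, then choosing an $\omega$-sequence cofinal in the (countable) well-order $(X,\le)$ and diagonalising produces an infinite pseudointersection, so $X$ could be extended and would not be maximal. Since every uncountable $\mathbf{\Sigma}^1_1$ set has the perfect set property, a hypothetical $\mathbf{\Sigma}^1_1$ maximal tower would be uncountable and hence contain a perfect set, contradicting thinness. So everything reduces to showing that no tower contains a perfect set.

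For the latter, suppose toward a contradiction that a tower $X$ contains a perfect set $P$. As $\le$ well orders $X$ it is in particular antisymmetric, so distinct members of $P$ are $\subseteq^*$-comparable and $\ne^*$; thus $R=\{(x,y)\in P\times P: y\subsetneq^* x\}$ is the strict part of a linear order on $P$ coinciding with the restriction of the tower order. Being a suborder of the well-order $(X,\le)$, the relation $R$ is well founded, and its rank equals the order type of $(P,\le)$; since $P$ is perfect and hence uncountable, this order type is an ordinal $\ge\omega_1$, so $R$ has rank $\ge\omega_1$. On the other hand $R$ is Borel, because $P$ is closed and almost inclusion $\subseteq^*$ is $\mathbf{\Sigma}^0_2$; in particular $R$ is $\mathbf{\Sigma}^1_1$. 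This contradicts the Kunen--Martin theorem, by which every well-founded $\mathbf{\Sigma}^1_1$ relation on a Polish space has countable rank.

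The conceptual heart of the argument is the realisation that the tower order restricts to an analytic well-founded relation of uncountable rank on any perfect subset, after which Kunen--Martin does the work; the two reduction facts are routine. If one wished to avoid invoking Kunen--Martin, the main obstacle would be to construct, inside $P$, an infinite $\subseteq^*$-increasing (equivalently, $\le$-descending) sequence by a fusion through the perfect tree of $P$: a convergent sequence in $P$ yields, via Ramsey's theorem, an infinite $\le$-monotone sequence, but forcing its direction to be \emph{descending} --- rather than merely ascending, which a well-order permits --- is exactly the delicate point, and is precisely what the boundedness content of Kunen--Martin circumvents.
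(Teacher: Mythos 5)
Your proof is correct, but the key step is carried by a different classical theorem than the one the paper uses. The paper also passes to a perfect set $P$ inside the tower and considers the relation $R$ induced by $\subseteq^*$ on $P$, but it then concludes directly from the fact (Kechris, Theorem 8.48) that a perfect Polish space admits no well-ordering with the Baire property as a subset of $P\times P$: since $R$ is Borel, it has the Baire property, contradiction. You instead pass to the strict part of the order, observe it is a Borel (hence $\mathbf{\Sigma}^1_1$) well-founded relation of rank $\ge\omega_1$ on $P$, and invoke the Kunen--Martin boundedness theorem. Both routes are standard and both reductions of the ``in particular'' clause (maximal towers are uncountable; uncountable analytic sets contain perfect sets) coincide with the paper's. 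The Baire-property argument has the advantage of needing only that the order has the Baire property, so it applies verbatim in contexts where all sets are assumed to have the Baire property; your Kunen--Martin argument is arguably stronger in a different direction, since applied to an arbitrary $\mathbf{\Sigma}^1_1$ subset $A$ of a tower (not just a perfect one) it shows directly that $A$ has countable order type, hence is countable, which would let you bypass the perfect set property altogether. Your closing remark correctly identifies why a naive fusion argument for a descending sequence is delicate; that difficulty is exactly what either classical theorem absorbs.
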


\begin{proof}
 Assume $X \subseteq [\omega]^{\omega}$ is a tower and $P \subseteq X$ is a perfect set. The set $R = \{ (x,y) : x,y \in P \wedge y \subseteq^* x \}$ is Borel. $P$ is an uncountable Polish space and $R$ is Borel as a subset of $P \times P$ . But $R$ is a well order of $P$, which contradicts $R$ having the Baire property by \cite[Theorem 8.48]{AK}. A maximal tower must be uncountable and an uncountable analytic set has a perfect subset by the Perfect Set Theorem. Thus there is no analytic maximal tower.  
\end{proof}

\begin{theorem}\label{thm:towersubsetofL}
 Every ${\Sigma}^1_2(x)$ tower is a subset of $L[x]$ and thus of size at most $\omega_1^{L[x]}$.
\end{theorem}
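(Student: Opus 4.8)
The plan is to derive this from the Mansfield--Solovay theorem, which asserts that every thin $\Sigma^1_2(x)$ set of reals is contained in $L[x]$; equivalently, a $\Sigma^1_2(x)$ set that has an element outside $L[x]$ must contain a perfect subset.

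First I would observe that this reduces the entire statement to the single fact that a $\Sigma^1_2(x)$ tower is thin, i.e.\ contains no perfect set. But this is precisely the content of Theorem~\ref{thm:nosigma11towers}, which shows that \emph{every} tower, regardless of its definability, is thin, via the observation that a Borel well-order on a perfect set would violate the Baire property. So no further work on the tower structure is needed: the well-ordering by $\subseteq^*$ enters only through that earlier Baire-category argument, and here I use it purely as a black box asserting thinness of $X$.

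Given thinness, I would apply the Mansfield--Solovay theorem directly to the thin $\Sigma^1_2(x)$ set $X$ to conclude $X \subseteq L[x]$. For the cardinality bound I would then invoke $L[x] \models \mathrm{CH}$: the canonical $\Sigma^1_2(x)$ well-ordering of the reals of $L[x]$ has order type $\omega_1^{L[x]}$, so there is (already in $L[x]$, hence in $V$) a surjection from $\omega_1^{L[x]}$ onto $\mathbb{R} \cap L[x] \supseteq X$, whence $|X| \le \omega_1^{L[x]}$.

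I do not anticipate a genuine obstacle; the argument is essentially a single application of a standard perfect-set/absoluteness theorem once thinness is in hand. The only point that deserves a line of care is the final cardinality computation, where one must remember that $\omega_1^{L[x]}$ is being used as an ordinal bound on $|X|$ (possibly countable from the point of view of $V$), not as a claim that $X$ has size $\aleph_1$.
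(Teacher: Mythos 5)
Your proposal is correct and follows exactly the paper's own argument: thinness of any tower (from Theorem~\ref{thm:nosigma11towers}) plus the Mansfield--Solovay theorem gives $X \subseteq L[x]$, and the cardinality bound follows from $|\mathbb{R} \cap L[x]| \le \omega_1^{L[x]}$. The extra care you give to the final cardinality computation is a detail the paper leaves implicit, but it is the same proof.
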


\begin{proof}
 If $X$ is a ${\Sigma}^1_2(x)$ tower then it contains no perfect set and is thus a subset of $L[x]$ by the Mansfield-Solovay Theorem \cite[Theorem 21.1]{AM}. 
\end{proof}

\begin{corollary}
 The existence of a $\Sigma^1_2(x)$ maximal tower implies that $\omega_1 = \omega_1^{L[x]}$. 
\end{corollary}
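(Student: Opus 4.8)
The plan is to read off the corollary directly from Theorem~\ref{thm:towersubsetofL} together with the elementary observation that a maximal tower is always uncountable. Write $X$ for the given $\Sigma^1_2(x)$ maximal tower.

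First I would verify that $\card{X} \geq \omega_1$. The key point is that a maximal tower can have neither a $\leq$-maximum nor a cofinal subset of order type $\omega$. Indeed, if $m$ were a $\leq$-maximum of $X$ (so $m \subseteq^* z$ for every $z \in X$), then any infinite $y \subseteq m$ would be a pseudointersection of $X$, contradicting maximality. And if $\{x_n : n \in \omega\}$ were cofinal in $X$ with $x_{n+1} \subseteq^* x_n$, then a standard diagonalization --- picking a strictly increasing sequence $k_n \in x_0 \cap \cdots \cap x_n$ --- produces an infinite pseudointersection $\{k_n : n \in \omega\}$, again contradicting maximality. Hence the well-order $X$ has uncountable cofinality, so $\card{X} \geq \omega_1$.

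Next I would invoke Theorem~\ref{thm:towersubsetofL}: since $X$ is $\Sigma^1_2(x)$, it is a subset of $L[x]$ of size at most $\omega_1^{L[x]}$, so $\card{X} \leq \omega_1^{L[x]}$. Combining the two bounds with the trivial inequality $\omega_1^{L[x]} \leq \omega_1$ (which holds because $L[x] \subseteq V$) yields
\[
\omega_1 \leq \card{X} \leq \omega_1^{L[x]} \leq \omega_1,
\]
forcing $\omega_1 = \omega_1^{L[x]}$.

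There is essentially no hard step here; the corollary is an immediate consequence of the preceding theorem. The only thing requiring a (routine) argument is the uncountability of maximal towers, and even that reduces to the familiar diagonalization showing that a countable $\subseteq^*$-decreasing sequence has a pseudointersection.
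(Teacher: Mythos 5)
Your proof is correct and follows exactly the route the paper intends: the corollary is stated without proof as an immediate consequence of Theorem~\ref{thm:towersubsetofL} together with the fact (noted in the proof of Theorem~\ref{thm:nosigma11towers}) that a maximal tower must be uncountable. Your filling-in of the uncountability argument via the standard diagonalization is the right routine detail.
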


All of the proofs above rely mostly on the fact that towers exhibit a well ordered structure and the maximality is inessential. Thus it is natural to ask for a more general version of a tower which is not trivially ruled out by an analytic definition. We call a set $X \subseteq [\omega]^{\omega}$ an inextendible linearly ordered tower (abbreviated as ilt) if it is linearly ordered with respect to $\subseteq^*$ and has no pseudointersection. We call $Y \subseteq X$ cofinal in $X$ if $\forall x \in X \exists y \in Y (y \subseteq^* x)$.

\begin{theorem}
\label{thm:noanalyticmlt}
 There is no $\mathbf{\Sigma}^1_1$ definable inextendible linearly ordered tower.
\end{theorem}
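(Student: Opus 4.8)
The plan is to adapt to inextendible linearly ordered towers (ilt's) the by-now standard machinery used for maximal almost disjoint families: approximate the purported ilt inside a countable transitive model, force a pseudointersection over that model, and transport the resulting contradiction back by absoluteness. Throughout I fix a $\mathbf\Sigma^1_1$ ilt $X$ with code $c$, so that $x\in X$ is $\Sigma^1_1(c)$, and I record that both $x\subseteq^* y$ and its negation are $\mathbf\Sigma^0_2$, hence Borel --- a fact I will use to ``continuously read'' the order during the forcing.

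The first, elementary, ingredient is that every countable $\subseteq^*$-chain $C$ has a pseudointersection: if $C$ has a $\subseteq^*$-least element it is its own pseudointersection, and otherwise one enumerates a $\subseteq^*$-decreasing cofinal $\omega$-sequence and diagonalizes. In particular an ilt is uncountable and has no countable cofinal subset (in the sense defined before the statement), since a countable cofinal subset is itself a chain whose pseudointersection would be one for the whole tower. This already explains why a direct diagonalization cannot work and a forcing/absoluteness argument is needed: a counterexample must have uncountable downward cofinality, so no countable object captures it outright.

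Next I would note that ``$X$ is an ilt'' is $\Pi^1_2(c)$: linear orderedness is $\Pi^1_1(c)$, while ``no pseudointersection'' reads $\forall z\,\exists x\,(x\in X\wedge z\not\subseteq^* x)$, which is $\Pi^1_2(c)$. By Löwenheim--Skolem plus Mostowski collapse I fix a countable transitive $M\ni c$ with $M\models\mathrm{ZF}^-+$``$X$ is an ilt'', arranged so that $M\models\mathrm{CH}$ (for instance by taking $M\prec L_\kappa[c]$, using that the $\Pi^1_2(c)$ statement reflects to $L[c]$ by Shoenfield absoluteness). Inside $M$ the chain $X^M$ then extends to a Ramsey ultrafilter $U$, and I force over $M$ with Mathias forcing $\mathbb{M}_U$ relative to $U$, obtaining an $M$-generic $g$ with $g\subseteq^* a$ for every $a\in U$, in particular for every $a\in X^M$. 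The crucial point is to show that $g$ is a pseudointersection not only of $X^M$ but of $X^{M[g]}$, the chain recomputed in the extension. Then $M[g]\models$``$X$ has a pseudointersection'', a $\Sigma^1_2(c)$ statement, which by Shoenfield absoluteness between $M$ and its own generic extension reflects back to $M$ and contradicts $M\models$``$X$ is an ilt''.

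The step I expect to be the main obstacle is exactly this last verification. A priori the forcing can adjoin new members of $X$ lying $\subseteq^*$-below $g$; one checks any such new member must itself be a pseudointersection of $X^M$, so the danger is real. Ruling it out is where the Prikry (pure decision) property of $\mathbb{M}_U$ together with the Borel-ness of $\in X$ and of $\subseteq^*$ become indispensable: given any $\mathbb{M}_U$-name $\tau$ and a condition forcing $\tau\in X$, a pure extension can decide enough of $\tau$ to force $g\subseteq^*\tau$, so that the relevant sets of conditions are dense and no new lower element survives. This density argument, resting on the Ramsey-theoretic structure of Mathias forcing and the bounded complexity of the defining relations, is the combinatorial core of the proof, precisely mirroring the analogous theorem for mad families.
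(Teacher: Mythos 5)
Your outer logic (reflect to a countable transitive model $M$ with $M\models$ ``$X$ is an ilt'', force a pseudointersection of $X^M$ by Mathias forcing relative to a Ramsey ultrafilter $U\supseteq X^M$, and pull the $\Sigma^1_2$ statement ``$X$ has a pseudointersection'' back to $M$ by Shoenfield) is coherent, but the step you yourself single out as ``the combinatorial core'' is a genuine gap rather than a routine density check. The danger is exactly the one you name: $M[g]$ may contain new elements of $X$ lying $\subseteq^*$-below every member of $X^M$, and to close the argument you must show that every such element almost contains $g$. Pure decision for $\mathbb{M}_U$ decides \emph{sentences} on pure extensions; it does not let you ``decide enough of $\tau$'' to conclude $g\subseteq^*\tau$, since $\tau$ is in general a genuinely new real whose values are not settled on any pure extension --- think of a name for a coinfinite subset of $g$ itself, which nothing you have said excludes from $X^{M[g]}$. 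What is needed at this point is a localized Ramsey-theoretic statement about the analytic set $X$ relative to $U$, in the spirit of Mathias's happy-families machinery for mad families; you assert the analogy but supply neither the statement nor a proof, and it is not clear the mad-family version transfers. A related defect is that an arbitrary Ramsey ultrafilter extending $X^M$ has no reason to ``anticipate'' where the new elements of $X$ will land; compare the paper's Solovay-model argument in Section 6, where a Mathias-type forcing does succeed, but only because the filter is manufactured from the names and conditions themselves via a product-forcing lemma exploiting linearity --- that is precisely the ingredient your $U$ lacks.

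The paper's own proof is far more elementary and bypasses all of this. Writing $X=p[T]$ for a tree $T$ on $2\times\omega$, one first prunes $T$ so that every node projects to a cofinal subset of $X$ (each discarded node has a lower bound in $X$, and inextendibility plus linearity give a single $x\in X$ strictly below all of these countably many bounds). One then regards $T$ as a Cohen-like forcing whose generic branch is forced into $p[T]$; absoluteness of linearity makes the generic comparable with each ground-model $y\in X$, cofinality of the projections rules out the generic lying above $y$, a counting argument yields one condition $(s,t)$ and one $n$ working for a cofinal set of $y$'s, and a ground-model branch through $T_{(s,t)}$ together with some $y$ strictly below it gives the contradiction. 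No elementary submodels, Ramsey ultrafilters, or Shoenfield absoluteness are required. You should either supply the missing Ramsey-theoretic lemma for your route or switch to the tree argument.
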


\begin{proof}
 Assume $X = p[T]$ is an ilt where $T$ is a tree on $2 \times \omega$.

 \begin{claim}
  There is $T' \subseteq T$ so that for every $(s,t) \in T'$, $p[T'_{(s,t)}]$ is cofinal in $X$.
 \end{claim}

 \begin{proof}
  Let $T' = \{ (s,t) : p[T_{(s,t)}] \text{ is cofinal in } X \}$. For every $(u,v) \in T \setminus T'$, we let $x_{u,v} \in X$ be such that $\forall y \in p[T_{(u,v)}] (x_{u,v} \subseteq^* y)$. The collection $\{ x_{u,v} : (u,v) \in T \setminus T' \}$ is countable and therefore there is $x \in X$ so that $x \subsetneq^* x_{u,v}$ for every $(u,v) \in T \setminus T'$. Now let $(s,t) \in T'$ be arbitrary and $x' \in X$ such that $x' \subseteq^* x$. As $p[T_{(s,t)}]$ is cofinal in $X$, there is $y \in p[T_{(s,t)}]$ so that $y \subseteq^* x'$. Say $(y,z) \in [T_{(s,t)}]$. For every $n \in \omega$, $(y \restriction n, z \restriction n) \in T'$ because else we get a contradiction to $y \subseteq^* x$. Thus $y \in p[T'_{(s,t)}]$.
 \end{proof}

 By the claim we can wlog assume that for every $(s,t) \in T$, $p[T_{(s,t)}]$ is cofinal in $X$. Now consider $T$ as a forcing notion (which is equivalent to Cohen forcing). The generic real will be a new element of $p[T]$ together with a witness. Let $\dot{c}$ be a name for the generic real. Notice that the statement that $p[T]$ is linearly ordered by $\subseteq^*$ is absolute. Thus for every $y \in X$ there is a condition $(s,t) \in T$ and $n \in \omega$ so that either $$ (s,t) \forces \dot{c} \subseteq y \setminus n $$
 or $$ (s,t) \forces y \subseteq \dot{c} \setminus n .$$

 The second option is impossible, because $p[T_{(s,t)}]$ is cofinal in $X$.
 We can thus find $(s,t)$, $n \in \omega$ and $Y \subseteq X$ cofinal in $X$, so that for every $y \in Y$, $(s,t) \forces \dot{c} \subseteq y \setminus n$. Let $(x,z) \in [T_{(s,t)}]$ be arbitrary. As $Y$ is cofinal in $X$, there is $y \in Y$ so that $y \subsetneq^* x$. But this clearly contradicts $(s,t) \forces \dot{c} \subseteq y \setminus n$.
\end{proof}

\begin{corollary}
 Every $\mathbf{\Sigma}^1_2$ inextendible linearly ordered tower has a cofinal subset of size $\omega_1$.
\end{corollary}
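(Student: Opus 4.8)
The plan is to represent the $\mathbf{\Sigma}^1_2$ ilt $X$ as an increasing union of $\omega_1$ analytic sets and then harvest a cofinal family of size $\omega_1$ from the pieces. Since $X$ is $\mathbf{\Sigma}^1_2$ it is $\omega_1$-Suslin, so $X = p[T]$ for a tree $T$ on $2 \times \omega_1$. Setting $T_\alpha = T \cap (2^{<\omega} \times \alpha^{<\omega})$ for $\alpha < \omega_1$, each $X_\alpha := p[T_\alpha]$ is $\mathbf{\Sigma}^1_1$ (the projection of a tree on $2 \times \alpha$ with $\alpha$ countable), the sequence $\langle X_\alpha : \alpha < \omega_1 \rangle$ is $\subseteq$-increasing, and $X = \bigcup_{\alpha < \omega_1} X_\alpha$ because every branch of $T$ uses only countably many ordinals.

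First I would observe that no $X_\alpha$ is cofinal in $X$. Indeed, if some $X_\alpha$ were cofinal, then $X_\alpha$ would itself be an ilt: it is linearly ordered by $\subseteq^*$ as a subset of $X$, and any pseudointersection $p$ of $X_\alpha$ would already be a pseudointersection of $X$ (given $x \in X$, cofinality yields $y \in X_\alpha$ with $y \subseteq^* x$, whence $p \subseteq^* y \subseteq^* x$), contradicting the inextendibility of $X$. Thus $X_\alpha$ would be a $\mathbf{\Sigma}^1_1$ ilt, contradicting Theorem~\ref{thm:noanalyticmlt}.

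Consequently, for each $\alpha < \omega_1$ I can choose a witness $x_\alpha \in X$ to the non-cofinality of $X_\alpha$, i.e.\ an element with $y \not\subseteq^* x_\alpha$ for all $y \in X_\alpha$; since $X$ is linearly ordered this means $x_\alpha \subsetneq^* y$ for every $y \in X_\alpha$, so $x_\alpha$ strictly dominates $X_\alpha$ in the tower order. I then claim $C = \{ x_\alpha : \alpha < \omega_1 \}$ is cofinal in $X$: given $z \in X$, pick $\alpha$ with $z \in X_\alpha$; then $x_\alpha \subseteq^* z$, which is exactly the cofinality requirement. Clearly $\card{C} \le \omega_1$.

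Finally I would rule out $C$ being countable, which forces $\card{C} = \omega_1$. The point is that an ilt has no countable cofinal subset: a countable cofinal $Y \subseteq X$ contains a $\subseteq^*$-decreasing $\omega$-sequence cofinal in $Y$ (the order on $X$ has no top, as a $\le$-maximum would be a pseudointersection), and the classical diagonalization produces a pseudointersection $p$ of that sequence; by cofinality $p$ is then a pseudointersection of $X$, contradicting inextendibility. I expect the only care to be needed in setting up the $\omega_1$-Suslin decomposition and in the repeated transfer of the ``no pseudointersection'' property along cofinal subsets; the combinatorial core --- extracting the upper bounds $x_\alpha$ and checking their cofinality --- is then immediate.
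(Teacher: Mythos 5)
Your proof is correct and follows essentially the same route as the paper: decompose the $\mathbf{\Sigma}^1_2$ set $X$ into an $\omega_1$-union of analytic pieces (the paper quotes the Borel decomposition from Moschovakis where you derive it from the $\omega_1$-Suslin tree, but this is the same device), apply Theorem~\ref{thm:noanalyticmlt} to conclude that no piece is cofinal and hence each piece has a lower bound in $X$, and observe that these lower bounds form a cofinal family. Your additional verifications --- that a cofinal analytic piece would itself be an analytic ilt, and that an ilt admits no countable cofinal subset, so the family has size exactly $\omega_1$ --- are exactly the details the paper leaves implicit.
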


\begin{proof}
 Assume $X$ is $\mathbf{\Sigma}^1_2$. Then it is the union of $\omega_1$ many Borel sets (see e.g. \cite{YM1}). By Theorem~\ref{thm:noanalyticmlt} each of these Borel sets has a lower bound in $X$.
\end{proof}

Note that the above results can be applied similarly to inextendible linearly ordered subsets of $(\omega^\omega, \leq^*)$. 

\section{A ${\Pi}^1_1$ definable maximal tower in ${L}$}

In this section we will show how to construct in ${L}$ a maximal tower with a ${\Pi}^1_1$ definition. For this we apply the coding technique that has been developed by A. Miller in \cite{AM1} in order to show the existence of various nicely definable combinatorial objects in ${L}$.

Let $O$ be the set of odd and $E$ the set of even natural numbers.

\begin{lemma}
 Suppose $z \in 2^\omega$, $y \in [\omega]^\omega$ and $\seq{x_\alpha : \alpha < \gamma}$ is a tower, where $\gamma < \omega_1$, so that $\forall \alpha < \gamma ( \card{x_\alpha \cap O} = \omega \wedge \card{x_\alpha \cap E} = \omega)$. Then there is $x \in [\omega]^\omega$ so that $\forall \alpha < \gamma (x \subseteq^* x_\alpha)$, $\card{x \cap O} = \omega$, $\card{x \cap E} = \omega$, $z \leq_T x$ and $\card{y \cap \omega \setminus x} = \omega$.
\end{lemma}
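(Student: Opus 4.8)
The plan is to construct $x$ one element at a time in a recursion of length $\omega$, building a pseudointersection of the tower while layering the coding of $z$ and the two side conditions on top of the standard diagonalization. First I would reduce to a manageable cofinal sequence. If $\gamma$ is $0$ or a successor, the tower has a $\subseteq^*$-least set (or none at all), and the construction below applies verbatim with the constant sequence $w_n$ equal to that set (resp.\ to $\omega$); so assume $\gamma$ is a countable limit, fix $\alpha_n \nearrow \gamma$ cofinal, and put $w_n = x_{\alpha_n}$, so that $w_{n+1} \subseteq^* w_n$ and any $x$ with $x \subseteq^* w_n$ for all $n$ satisfies $\forall \alpha < \gamma (x \subseteq^* x_\alpha)$. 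Replacing $w_n$ by $\bigcap_{k \le n} w_k$, I may assume the sequence is genuinely nested, $w_0 \supseteq w_1 \supseteq \cdots$, with each $w_n$ still infinite (since $w_n \setminus w_k$ is finite for $k \le n$) and, by hypothesis, still meeting both $O$ and $E$ infinitely.

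Now build $x = \{a_0 < a_1 < a_2 < \cdots\}$ by choosing the $a_k$ in increasing order in consecutive stages, where at stage $k$ the element $a_k$ is required to lie in $w_k$; by nesting this yields $a_k \in w_n$ for all $n \le k$, which forces $x \subseteq^* w_n$ for every $n$ and hence clause (1). The remaining freedom in choosing $a_k$ inside the infinite set $w_k$ is used for the other demands. For the coding I would let the even-indexed elements carry $z$: choose $a_{2n}$ to be an even member of $w_{2n}$ if $z(n) = 0$ and an odd member if $z(n) = 1$. Given $x$ as an oracle one computes $a_{2n}$ as the $(2n)$-th element of $x$ and reads off $z(n)$ from its parity, so $z \le_T x$, giving clause (3). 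To secure clause (2) independently of $z$, I would use the odd-indexed elements as a parity reservoir, taking $a_{2n+1}$ odd when $n$ is even and even when $n$ is odd; this puts infinitely many odd and infinitely many even numbers into $x$ no matter what $z$ is. Finally, for clause (4), fix the increasing enumeration of $y$ and at stage $n$ reserve one further element of $y$ above the current maximum, permanently forbidding it from $x$; since at every stage only finitely many points have been reserved and $w_k$ contains infinitely many integers of each parity above any bound, each required $a_k$ can always avoid all reserved points, so the reserved elements form an infinite subset of $y \setminus x$.

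All of these choices are simultaneously realizable because at stage $k$ one must pick $a_k$ of a prescribed parity inside $w_k$, above $a_{k-1}$ and avoiding finitely many reserved values, and $w_k$ contains infinitely many integers of each parity; this is exactly where the hypothesis $\card{x_\alpha \cap O} = \card{x_\alpha \cap E} = \omega$ enters, and I expect the compatibility of the parity-based coding with the pseudointersection requirement to be the only real point to check, everything else being routine bookkeeping. The final verification is then immediate: $x$ is infinite and almost contained in every $w_n$, hence in every $x_\alpha$; it meets $O$ and $E$ infinitely; $z$ is computable from $x$; and $y \setminus x$ is infinite.
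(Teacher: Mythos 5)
Your proof is correct and follows essentially the same approach as the paper: a standard diagonalization to obtain a pseudointersection, with $z$ coded into the parities of its elements using the hypothesis that every $x_\alpha$ meets $O$ and $E$ infinitely. The only organizational difference is that you interleave coding positions with a parity reservoir in a single recursion, which lets you avoid the paper's separate treatment of eventually constant $z$.
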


\begin{proof}
 It is a standard diagonalization to find $x$ so that $\forall \alpha < \gamma (x \subseteq^* x_\alpha)$, $\card{x \cap O} = \omega$, $\card{x \cap E} = \omega$ and $\card{y \cap \omega \setminus x} = \omega$. We assume that $z$ is not eventually constant, else there is nothing to do. Now given $x$ find $\seq{n_i}_{i \in \omega}$ increasing in $x$ so that $n_i \in O$ iff $z(i) = 0$. Let $x' = \{ n_i : i < \omega \}$. Then $x'$ works.
\end{proof}

\begin{theorem}
\label{thm:coanalytictower}
 Assume ${V} = {L}$. Then there is a ${\Pi}^1_1$ definable maximal tower.
\end{theorem}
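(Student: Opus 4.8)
The plan is to build the tower $X=\{x_\alpha:\alpha<\omega_1\}$ by a transfinite recursion in $L$ of length $\omega_1$, running along the canonical $\Sigma_1$ well order $<_L$, and using the preceding Lemma at each step to achieve three things at once: (i) keep the sequence $\subseteq^*$-decreasing and infinite on both $O$ and $E$, (ii) diagonalize against pseudointersections so that $X$ becomes maximal, and (iii) code a verifying witness into $x_\alpha$ so that the end product admits a $\Pi^1_1$ definition. First I would fix a bookkeeping scheme and, at stage $\alpha$, let $y_\alpha$ be the $<_L$-least real in $[\omega]^\omega$ not yet handled. Next I would choose a countable ordinal $\gamma_\alpha$ large enough that $L_{\gamma_\alpha}\models\mathrm{ZF}^-+V=L$ and $\langle x_\beta:\beta<\alpha\rangle,\alpha\in L_{\gamma_\alpha}$, and let $z_\alpha\in 2^\omega$ code the structure $L_{\gamma_\alpha}$ together with the datum that $x_\alpha$ is to be its $\alpha$-th tower element. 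Applying the Lemma with $z=z_\alpha$, $y=y_\alpha$ and the tower $\langle x_\beta:\beta<\alpha\rangle$ (the hypotheses hold by the maintained invariant (i)) yields $x_\alpha$ with $x_\alpha\subseteq^* x_\beta$ for all $\beta<\alpha$, with $x_\alpha$ meeting $O$ and $E$ infinitely, with $z_\alpha\leq_T x_\alpha$, and with $y_\alpha\not\subseteq^* x_\alpha$. Since every real is some $y_\alpha$, none is a pseudointersection, so $X$ is a maximal tower.

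The crucial feature is that the coding makes $z_\alpha$ \emph{recursively} decodable from $x_\alpha$: by the construction in the Lemma the elements of $x_\alpha$, listed increasingly, spell out $z_\alpha$ through their parities (the $i$-th element lies in $O$ iff $z_\alpha(i)=0$). Writing $z(x)$ for the real read off from $x\in[\omega]^\omega$ in this canonical recursive way, I would define
\[ x\in X \iff x\in[\omega]^\omega \ \wedge\ z(x)\ \text{codes a well founded model } M\models\mathrm{ZF}^-+V=L \text{ with } x\in M \text{ and } M\models\text{``}x\text{ is an element of the canonical tower''.} \]
This is $\Pi^1_1$: the clause asserting that $z(x)$ codes a well founded (extensional) structure is $\Pi^1_1$, since well-foundedness of a coded relation is $\Pi^1_1$; the satisfaction clause ``$M\models\varphi$'' is arithmetic in $z(x)$ once the coded structure is fixed; and $z(x)$ is recursive in $x$. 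Closure of $\Pi^1_1$ under the remaining arithmetical operations gives the bound.

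It remains to check that this formula defines exactly $X$. For completeness, if $x=x_\alpha$ then $z(x)=z_\alpha$ genuinely codes the well founded model $L_{\gamma_\alpha}$, which contains $x$ and satisfies ``$x$ is the $\alpha$-th tower element'', so the right-hand side holds. For soundness, suppose the right-hand side holds for some $x$; by well-foundedness and extensionality the coded model collapses to some $L_\gamma$ with $L_\gamma\models\mathrm{ZF}^-+V=L$ and $L_\gamma\models$``$x=x_\alpha$''. Because the recursion defining the tower is $\Sigma_1$ and uses only the restrictions of $<_L$ and of the bookkeeping below $\gamma$, it is absolute between $L_\gamma$ and $L$; hence $\langle x_\beta:\beta\le\alpha\rangle$ as computed inside $L_\gamma$ is an initial segment of the true construction, and $x=x_\alpha\in X$. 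The main obstacle is exactly this soundness/absoluteness step: one must rule out that a \emph{merely} well founded certificate be ``faked'' by some $x\notin X$, which is why $\gamma_\alpha$ is taken large enough that $L_{\gamma_\alpha}$ correctly computes the entire construction up to stage $\alpha$ (including the $<_L$-least witnesses $y_\beta$), so that $L_\gamma\models$``$x=x_\alpha$'' transfers upward to $V=L$. The diagonalization ensuring maximality and the bookkeeping are then routine.
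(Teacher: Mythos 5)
Your overall architecture is the right one and matches the paper's: a length-$\omega_1$ recursion along $<_L$, the preceding Lemma used at each stage to diagonalize against the $<_L$-least unhandled real $y_\alpha$ while coding a certificate $z_\alpha$ into the parities of $x_\alpha$, and a $\Pi^1_1$ definition via well-founded models of a fragment of ZFC that verify the construction. But the specific mechanism you propose for the $\Pi^1_1$ definition has a genuine circularity. You choose $z_\alpha$ to code the structure $L_{\gamma_\alpha}$ \emph{before} $x_\alpha$ is constructed, and then obtain $x_\alpha$ from the Lemma with $z_\alpha\leq_T x_\alpha$; consequently $x_\alpha\notin L_{\gamma_\alpha}$ (indeed $z_\alpha\notin L_{\gamma_\alpha}$ already, since the $<_L$-least code for $L_{\gamma_\alpha}$ only appears after level $\gamma_\alpha$, which is exactly why one demands that $L_{\gamma_\alpha}$ projects to $\omega$). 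Your completeness check asserts that the model coded by $z(x_\alpha)$ ``contains $x$ and satisfies `$x$ is the $\alpha$-th tower element'\,''; this is false, because the certifying model must contain $x_\alpha$ and hence must be a level strictly above $\gamma_\alpha$, whereas the only model your formula has access to is the one coded by $z(x)$, which is $L_{\gamma_\alpha}$. You cannot repair this by letting $z_\alpha$ code a model containing $x_\alpha$, since $z_\alpha$ is an input to the construction of $x_\alpha$.

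The paper resolves exactly this tension by splitting the two roles of $z_\alpha$. There, $z_\xi$ codes only the \emph{ordinal} $\delta(\xi)$, and the model that certifies membership is $L_{\delta(\xi)+\omega}$, which does contain the whole tuple $\seq{\delta(\nu),z_\nu,x_\nu:\nu\le\xi}$ including $x_\xi$; a code $r$ for that model is then \emph{existentially quantified}, not read off recursively from $x$. The point of coding $\delta(\xi)$ into $x_\xi$ is solely to guarantee $\delta(\xi)+\omega<\omega_1^{x_\xi}$, so that the witness $r$ can be found in $L_{\omega_1^{x_\xi}}\cap 2^\omega$; one then invokes the standard fact (from Miller, ultimately Spector--Gandy) that for $\Pi^1_1$ $\Phi$ the set $\{x:\exists r\in L_{\omega_1^{x}}\cap 2^\omega\,\Phi(r,x)\}$ is still $\Pi^1_1$. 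This bounded existential quantifier is the missing ingredient in your write-up; without it (or some substitute for it) your defining formula is not satisfied by the $x_\alpha$ you construct. The rest of your argument --- the use of the Lemma, the maximality via bookkeeping, and the soundness/absoluteness of the $\Sigma_1$ recursion between $L_\gamma$ and $L$ --- is essentially the paper's and is fine.
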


In the rest of the paper, $<_L$ will always stand for the canonical global ${L}$ well-order. Whenever $r \in 2^\omega$, we write $E_r \subseteq \omega^2$ for the relation defined by $$m E_r n \text{ iff } r(2^m 3^n) = 0.$$

If $E_r$ is a well-founded and extensional relation then we denote with $M_r$ the unique transitive $\in$-model isomorphic to $(\omega,E_r)$. Notice that $\{r \in 2^\omega : E_r \text{ is well-founded and extensional}\}$ is $\Pi^1_1$.

If $E_r$ is a well-order on $\omega$ then $\| r \|$ denotes the unique countable ordinal $\alpha$ so that $(\omega, E_r)$ is isomorphic to $(\alpha, \in)$. We also say that $r$ codes $\alpha$. The set of $r$ so that $E_r$ is a well-order is called $WO$. $WO$ is obviously $\Pi^1_1$. 

For any real $x \in 2^\omega$ we define $\omega_1^x$ to be the least countable ordinal which has no recursive code in $x$, i.e. the least ordinal $\alpha$ so that for any recursive function $r \colon 2^\omega \to 2^\omega$, $r(x)$ does not code $\alpha$. 

\begin{proof}[Proof of Theorem~\ref{thm:coanalytictower}]
Let $\seq{y_\xi : \xi < \omega_1}$ enumerate $[\omega]^\omega$ via the canonical well order of ${L}$. We will construct a sequence $\seq{\delta(\xi),z_\xi, x_\xi : \xi < \omega_1}$, where for every $\xi < \omega_1$:

 \begin{itemize}
  \item $\delta(\xi)$ is a countable ordinal
  \item $z_\xi \in 2^\omega \cap {L}_{\delta(\xi)+\omega}$
  \item $x_\xi \in [\omega]^{\omega} \cap {L}_{\delta(\xi)+\omega}$
 \end{itemize}

 The sequence is defined by the following requirements for each $\xi < \omega_1$:

 \begin{enumerate}
  \item $\delta(\xi)$ is the least ordinal $\delta$ greater than $\sup_{\nu < \xi} \delta(\xi)$ so that $y_\xi,\seq{\delta(\nu),z_\nu,x_\nu : \nu < \xi}  \in {L}_{\delta}$ and ${L}_{\delta}$ projects to $\omega$\footnote{This means that over ${L}_{\delta}$ there is a definable surjection to $\omega$. The set of such $\delta$ is unbounded in $\omega_1$.}.
  \item $z_\xi$ is the $<_L$ least code for the ordinal $\delta(\xi)$.
  \item $\seq{x_\nu : \nu < \xi}$ is a tower and $\forall \nu < \xi (\card{x_\nu\cap O}) = \omega \wedge \card{x_\nu \cap E} = \omega)$.
  \item $x_\xi$ is $<_L$ least so that $\forall \nu < \xi (x_\xi \subseteq^* x_\nu)$, $\card{x_\xi \cap O} = \omega$, $\card{x_\xi \cap E} = \omega$, $z_\xi \leq_T x$ and $\card{y_\xi \cap \omega \setminus x} = \omega$.
 \end{enumerate}

 Notice that $z_\xi$ and $x_\xi$ indeed can be found in ${L}_{\delta(\xi)+\omega}$ given that $y_\xi,\seq{x_\nu : \nu < \xi}  \in {L}_{\delta(\xi)}$, and that ${L}_{\delta(\xi)}$ projects to $\omega$. It is then straightforward to check that (1)-(4) uniquely determine a sequence $\seq{\delta(\xi),z_\xi, x_\xi : \xi < \omega_1}$ for which $\seq{x_\xi : \xi < \omega_1}$ is a maximal tower.

\begin{claim}
 $\{x_\xi : \xi < \omega_1 \}$ is a ${\Pi}^1_1$ subset of $2^\omega$.
\end{claim}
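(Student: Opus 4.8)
The plan is to show that membership in the tower $\{x_\xi : \xi < \omega_1\}$ can be characterized by a $\Pi^1_1$ formula. The standard Miller-style strategy is to find, for each element $x_\xi$ of the tower, a real that codes a countable ordinal and an initial segment of the construction in such a way that the correctness of the coding can be verified inside a countable transitive model of a sufficiently large fragment of $\mathrm{ZF}$. Concretely, I would aim to prove that $x \in \{x_\xi : \xi < \omega_1\}$ if and only if there exists a real $r$ such that $E_r$ is well-founded and extensional (a $\Pi^1_1$ condition), and in the transitive collapse $M_r$ the real $x$ appears as some $x_\xi$ in the sequence built by running requirements (1)--(4) \emph{internally}. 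The key point, which makes this $\Pi^1_1$ rather than merely $\Sigma^1_2$, is that the existential quantifier over $r$ can be eliminated: there is a \emph{canonical} minimal witnessing real, namely $z_\xi$ together with the code for $\delta(\xi)+\omega$, so that rather than saying ``there exists a model'' one says ``every well-founded extensional $r$ of the right form computes $x$ correctly.''

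The key steps, in order, are as follows. First I would make precise the internal version of the construction: inside any $L_\delta$ that projects to $\omega$ and contains the relevant initial data, requirements (1)--(4) are absolute enough to be evaluated, so that ``$x = x_\xi$ in the construction relativized to $M_r$'' is an arithmetic-in-$r$ statement once $M_r$ is given. Second, I would use the fact that $x_\xi, z_\xi \in L_{\delta(\xi)+\omega}$ to guarantee that a correct witness $r$ coding an ordinal around $\delta(\xi)+\omega$ always exists for genuine tower elements. Third, and most importantly, I would invoke the condensation-style uniqueness: because $\delta(\xi)$ is chosen \emph{least} and $z_\xi$ is $<_L$-least, any two models correctly coding enough of the construction must agree on the value of $x_\xi$, which lets me rewrite the $\Sigma^1_2$-looking statement ``$\exists r (\ldots)$'' as the $\Pi^1_1$ statement ``for the canonical $r$ attached to $x$, the construction run inside $M_r$ yields $x$.'' The technical device here is that the minimal code $z_\xi$ is itself recursive in $x_\xi$ by requirement (4) (since $z_\xi \leq_T x_\xi$), so $x$ literally carries its own witness; the quantifier over reals collapses to a recursive operation on $x$, and well-foundedness is the only $\Pi^1_1$ ingredient left.

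The main obstacle I expect is exactly this quantifier-elimination step: showing that the naive definition ``there is a countable model in which $x$ is enumerated'' does not cost a real quantifier of the wrong complexity. The work is to verify that $z_\xi \leq_T x_\xi$ lets one \emph{recover} from $x$ alone a real coding $\delta(\xi)$, and then to argue that well-foundedness of the decoded relation, together with the model thinking ``I am an $L_\delta$ projecting to $\omega$ and I have correctly executed (1)--(4) up to $x$,'' pins down $x$ uniquely as a genuine member of the tower. One must be careful that a \emph{false} witness — a well-founded model that believes it has run the construction but has the wrong ordinals — cannot sneak in a spurious $x$; this is handled by the leastness clauses in (1) and (2), which force any correct model to compute the same $\delta(\nu)$, $z_\nu$, $x_\nu$ for all $\nu$ below the relevant stage, so disagreement is impossible. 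Once these absoluteness and uniqueness facts are in place, assembling the final $\Pi^1_1$ formula is routine bookkeeping with the coding apparatus $E_r$, $M_r$ and $WO$ already introduced.
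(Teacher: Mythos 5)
Your overall architecture matches the paper's: a formula $\Phi(r,x)$ asserting that $E_r$ is well-founded and extensional and that $M_r$ internally verifies an initial segment of the construction ending in $x$, with absoluteness of clauses (1)--(4) for suitable transitive models and the leastness clauses guaranteeing that different correct models cannot disagree. The gap is in the decisive step, the complexity of the quantifier over $r$. The paper does \emph{not} eliminate that quantifier: it \emph{bounds} it, showing that a genuine $x_\xi$ always has a witness $r \in L_{\omega_1^{x_\xi}} \cap 2^\omega$ with $M_r = L_{\delta(\xi)+\omega}$, and then invoking the standard fact that $\exists r \in L_{\omega_1^{x}} \cap 2^\omega\,(\Phi(r,x))$ with $\Phi$ being $\Pi^1_1$ is again $\Pi^1_1$. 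The sole purpose of the requirement $z_\xi \leq_T x_\xi$ is to force $\delta(\xi)+\omega < \omega_1^{x_\xi}$, so that the witness lands inside $L_{\omega_1^{x_\xi}}$.

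Your two proposed mechanisms for handling the quantifier both fail as stated. First, you claim the witness is obtained ``by a recursive operation on $x$'' because ``$x$ literally carries its own witness.'' This conflates a code for the \emph{ordinal} $\delta(\xi)$ --- which is indeed recursive in $x_\xi$, namely $z_\xi$ read off from the odd/even pattern --- with a code for the \emph{structure} $L_{\delta(\xi)+\omega}$, which is what $\Phi$ needs; the latter is in general only an element of $L_{\omega_1^{x_\xi}}$, not recursive (or even hyperarithmetic in any uniform recursive way) in $x_\xi$. Second, the reformulation ``every well-founded extensional $r$ of the right form computes $x$ correctly'' is vacuously true of any $x$ admitting no such $r$ at all, so without an accompanying existence assertion it does not define the tower --- and the existence assertion is exactly the $\exists r$ you were trying to avoid. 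To repair the proof you should replace both devices by the bounded quantifier $\exists r \in L_{\omega_1^{x}} \cap 2^\omega$ and cite the fact that this quantifier preserves $\Pi^1_1$-ness; the rest of your outline (absoluteness, uniqueness via leastness, $z_\xi \leq_T x_\xi$) then goes through as in the paper.
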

\begin{proof}
Let $\Psi(v)$ be the formula expressing that for some $\xi < \omega_1$, $v = \seq{\delta(\nu),z_\nu, x_\nu : \nu \leq \xi}$. More precisely, $\Psi(v)$ says that $v$ is a sequence $\seq{\rho_\nu, \zeta_\nu, \tau_\nu : \nu \leq \xi}$ of some length $\xi +1$, that satisfies the clauses (1)-(4) for every $\nu \leq \xi$.

The formula $\Psi(v)$ is absolute for transitive models of some finite fragment $\hbox{Th}$ of ZFC which holds at limit stages of the ${L}$ hierarchy. Namely we need absoluteness of the formula $\varphi_1(\xi,y)$ expressing that $y=y_\xi$, $\varphi_2(\delta,M)$ expressing that $M = {L}_{\delta}$ projects to $\omega$ and $\varphi_3(z,\delta)$ expressing that $z$ is the $<_L$ least code for $\delta$.

Moreover we have that $\seq{\delta(\nu),z_\nu, x_\nu : \nu \leq \xi} \in {L}_{\delta(\xi)+\omega}$ and that $${L}_{\delta(\xi)+\omega} \models \Psi(\seq{\delta(\nu),z_\nu, x_\nu : \nu \leq \xi})$$ for every $\xi < \omega_1$.

Now let $\Phi(r,x)$ be a formula expressing that $E_r$ is a well founded and extensional relation, $M_r \models \hbox{Th}$ and for some $v \in M_r$, $$M_r \models v \text{ is a sequence } \seq{\rho_\nu, \zeta_\nu, \tau_\nu : \nu \leq \xi} \wedge \Psi(v) \wedge \tau_\xi = x . $$

We thus have that $x = x_\xi$ for some $\xi < \omega_1$ iff $\exists r \in 2^\omega \Phi(r,x)$. $\Phi(r,x)$ can clearly be taken as a ${\Pi}^1_1$ formula.

For any $\xi < \omega_1$, the well order $\delta(\xi)$ is coded by $z_\xi$ and $z_\xi \leq_T x_\xi$. Thus $\delta(\xi) + \omega < \omega_1^{x_\xi}$ and there is $r \in {L}_{\omega_1^{x_\xi}}$ so that $M_r = {L}_{\delta(\xi)+\omega}$. In particular $$\exists r \in {L}_{\omega_1^{x_\xi}} \cap 2^\omega (\Phi(r,x_\xi)). $$

We get that $$\exists \xi < \omega_1 (x = x_\xi) \leftrightarrow \exists r \in {L}_{\omega_1^{x}} \cap 2^\omega (\Phi(r,x)). $$
The right hand side can be expressed by a ${\Pi}^1_1$ formula.

\end{proof}
\end{proof}

\begin{remark}
By Theorem~\ref{thm:towersubsetofL} the $\Pi^1_1$ tower constructed above is a subset of $L$. This implies that its definition will evaluate to the same set in any extension of $L$. 
\end{remark}

\begin{corollary}
 The existence of a coanalytic tower is consistent with the bounding number $\mathfrak{b}$ being arbitrarily large.
\end{corollary}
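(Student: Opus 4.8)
The plan is to start in $L$ and pass to a forcing extension that makes the bounding number large while keeping the $\Pi^1_1$ maximal tower alive. In $L$, let $T$ be the $\Pi^1_1$ maximal tower produced in Theorem~\ref{thm:coanalytictower}, which by Theorem~\ref{thm:indestructibletower} is indestructible by every proper Suslin forcing. To arrange $\mathfrak{b} = \kappa$ for a prescribed regular uncountable cardinal $\kappa$, I would force with the finite support iteration $\mathbb{D}_\kappa$ of Hechler forcing of length $\kappa$; this is a ccc forcing, and it is standard that $L^{\mathbb{D}_\kappa} \models \mathfrak{b} = \mathfrak{d} = \kappa$. The goal is then to check that $T$ remains a $\Pi^1_1$ maximal tower in $L^{\mathbb{D}_\kappa}$.

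Two of the three requirements are essentially free. Since $T \subseteq L$, forcing adds no new elements to $T$ and does not alter $\subseteq^*$ on $T$, so $T$ is still well ordered by $\subseteq^*$, i.e.\ still a tower; and by the remark following Theorem~\ref{thm:coanalytictower} the very same $\Pi^1_1$ formula defines exactly $T$ in any extension of $L$. Thus the entire problem reduces to showing that $T$ stays \emph{maximal}, that is, that $\mathbb{D}_\kappa$ adds no pseudointersection of $T$. I would organize this by induction on the stages $\alpha \leq \kappa$, proving that $T$ is maximal in each $L^{\mathbb{D}_\alpha}$. The limit stages are handled cleanly by ccc-ness together with absoluteness: for $\kappa$ regular uncountable every real of $L^{\mathbb{D}_\kappa}$ already appears in some $L^{\mathbb{D}_\alpha}$ with $\alpha < \kappa$, and the statement ``$y$ is a pseudointersection of $T$'', being $\forall x \in T\,(y \subseteq^* x)$ with $T$ unchanged, is absolute between the intermediate model and the final model. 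Hence maximality at all $\alpha < \kappa$ propagates to $\kappa$, and a cofinal union argument disposes of the limit steps below $\kappa$ as well.

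The successor step is the one I expect to be the main obstacle. There we must know that Hechler forcing, applied \emph{over the intermediate model} $L^{\mathbb{D}_\alpha}$, adds no pseudointersection to $T$. Over $L$ itself this is immediate, since Hechler forcing is Suslin and ccc, hence a proper Suslin forcing, so Theorem~\ref{thm:indestructibletower} applies directly. The difficulty is that $\mathbb{D}_\kappa$ is not itself a Suslin poset once $\kappa > \omega_1$ (its conditions are finite partial functions on $\kappa$, not coded by reals), so Theorem~\ref{thm:indestructibletower} does not literally apply to the whole iteration, and at stage $\alpha$ the ground model is no longer $L$. The crux is therefore to verify that the indestructibility argument for $T$ relativizes, i.e.\ that the combinatorial property of $T$ witnessing indestructibility by a single proper Suslin forcing is robust enough to survive as a ground-model property through each step of the iteration; granting this, each Hechler iterand preserves the maximality of $T$, the induction closes, and $T$ is a $\Pi^1_1$ maximal tower in a model with $\mathfrak{b} = \kappa$ arbitrarily large, which is exactly the assertion of the corollary.
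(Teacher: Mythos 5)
Your overall setup matches the paper's --- force with a finite support Hechler iteration over $L$, note that the tower is a subset of $L$ so both the set and its $\Pi^1_1$ definition survive unchanged, and reduce everything to preservation of maximality --- but the preservation argument, which is the entire content of the corollary, has genuine gaps. The paper does not argue stage by stage at all: it simply invokes the Baumgartner--Dordal theorem \cite{BD} that finite support iterations of Hechler forcing preserve \emph{every} ground model maximal tower, so no indestructibility machinery is needed and the tower of Theorem~\ref{thm:coanalytictower} can be used directly. Note also that Theorem~\ref{thm:indestructibletower} does not assert that the tower of Theorem~\ref{thm:coanalytictower} is Suslin-indestructible; it constructs a \emph{different} tower (diagonalizing against all trees coded in $L$) with that property, so your opening sentence is a misattribution, though repairable by switching towers.

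The two substantive problems are in your induction. First, the successor step is not proved: you correctly identify that at stage $\alpha$ the ground model is $L^{\mathbb{D}_\alpha}$ rather than $L$, where the diagonalization of Theorem~\ref{thm:indestructibletower} (which runs only over trees lying in $L$) need not cover the relevant analytic sets of names, and you then write ``granting this'' --- that is exactly the point requiring an argument, and it is not obvious that the indestructibility relativizes. Second, your limit step is wrong below $\kappa$: at a limit ordinal $\lambda$ of countable cofinality a finite support iteration adds new reals (for instance Cohen reals over the union of the earlier models), so it is false that every real of $L^{\mathbb{D}_\lambda}$ appears in some $L^{\mathbb{D}_\alpha}$ with $\alpha<\lambda$; handling such limits is where much of the content of the Baumgartner--Dordal preservation theorem lies. (A route closer to your intentions that can be made to work: every real of $L^{\mathbb{D}_\kappa}$ lies in the extension by the subiteration supported on a countable set of coordinates, which is isomorphic to a countable length finite support Hechler iteration over $L$ and hence to a single proper Suslin ccc poset coded in $L$, to which Theorem~\ref{thm:indestructibletower} applies directly --- but this requires using the tower of Theorem~\ref{thm:indestructibletower} and verifying that such subiterations are complete suborders, neither of which appears in your proposal.)
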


Recall that the bounding number is defined as the least size of an unbounded family in $(\omega^\omega, <^*)$. It is a natural lower bound for many other classical cardinal characteristics. 

\begin{proof}
 It is well known that finite support iterations of Hechler forcing for adding a dominating real preserve all ground model maximal towers to be maximal (see \cite{BD} for more details).
\end{proof}

\section{Indestructible Towers}

Remember that the pseudointersection number $\mathfrak{p}$ is the least cardinal $\kappa$ so that any set $\mathcal{F} \subseteq [\omega]^\omega$ with the finite intersection property and $\vert \mathcal{F} \vert < \kappa$ has a pseudointersection. $\mathcal{F}$ has the finite intersection property if for any $\mathcal{F}' \in [\mathcal{F}]^{<\omega}$, $\bigcap \mathcal{F}'$ is infinite. We obtain the following result. 

\begin{theorem}
Assume $\mathfrak{p} = \mathfrak{c}$. Let $\mathcal{P}$ be a collection of at most $\mathfrak{c}$ many proper posets of size $\mathfrak{c}$. Then there is a maximal tower indestructible by any $\mathbb{P} \in \mathcal{P}$.
\end{theorem}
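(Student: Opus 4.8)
The plan is to build the tower by transfinite recursion of length $\mathfrak{c}$ as a $\subseteq^*$-decreasing sequence $\langle x_\alpha : \alpha < \mathfrak{c}\rangle$, using the hypothesis $\mathfrak{p}=\mathfrak{c}$ to pass limit stages: at each $\alpha < \mathfrak{c}$ the family $\{x_\beta : \beta < \alpha\}$ is a filter base of size $<\mathfrak{p}$, so it has a pseudointersection $w_\alpha$, and I always take $x_\alpha \subseteq w_\alpha$ with $w_\alpha \setminus x_\alpha$ infinite, keeping the tower strictly decreasing. Along the way two things must be arranged: that the tower is maximal already in $V$ (by absoluteness of ``$y\subseteq^* x_\alpha$'' this kills every ground model pseudointersection in all extensions, the $V$-maximality being the special case of trivial names below), and that no $\mathbb{P}\in\mathcal{P}$ adds a pseudointersection. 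The naive attempt to enumerate all pairs $(\mathbb{P},\dot y)$ with $\dot y$ a name for an element of $[\omega]^\omega$ founders on cardinality: a proper poset of size $\mathfrak{c}$ can carry $2^{\mathfrak{c}}$ nice names, far more than the $\mathfrak{c}$ stages available. Overcoming this is the heart of the argument.

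The reduction is to let the \emph{tasks} be triples $(\mathbb{P},Q,\dot z)$ where $\mathbb{P}\in\mathcal{P}$, $Q\in[\mathbb{P}]^{\leq\omega}$ is a countable subposet, and $\dot z$ is a nice $Q$-name for a subset of $\omega$. Since $|\mathbb{P}|=\mathfrak{c}$ we have $|[\mathbb{P}]^{\leq\omega}|=\mathfrak{c}^{\aleph_0}=\mathfrak{c}$, and a nice $Q$-name over a countable $Q$ is coded by a real, so there are at most $\mathfrak{c}$ names per $Q$; as $|\mathcal{P}|\leq\mathfrak{c}$, there are only $\mathfrak{c}$ tasks altogether. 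I fix a bookkeeping surjection assigning one task to each stage $\alpha<\mathfrak{c}$.

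At stage $\alpha$, given the assigned task $(\mathbb{P},Q,\dot z)$ and the pseudointersection $w_\alpha$ of $\{x_\beta:\beta<\alpha\}$, I build $x_\alpha\subseteq w_\alpha$ (infinite, coinfinite in $w_\alpha$) satisfying the following \emph{spreading guarantee}: for every $q'\in Q$ with $q'\Vdash_Q |\dot z\cap w_\alpha|=\omega$ and every $k\in\omega$, there are $q''\leq_Q q'$ and $n>k$ with $n\in w_\alpha\setminus x_\alpha$ and $q''\Vdash_Q \check n\in\dot z$. This is achieved by a countable diagonalization: enumerate the pairs $(q',k)$; at each step only finitely many elements of $w_\alpha$ have been decided ``in'' or ``out''; when $q'$ forces $\dot z\cap w_\alpha$ infinite, that very fact lets me find, below $q'$, an undecided $n>k$ in $w_\alpha$ and a $q''\leq_Q q'$ forcing $\check n\in\dot z$, which I put ``out'', while reserving one fresh undecided element ``in''. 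Deciding only two elements per step keeps both $x_\alpha$ and $w_\alpha\setminus x_\alpha$ infinite, and $x_\alpha\subseteq w_\alpha\subseteq^* x_\beta$ with $w_\alpha\setminus x_\alpha$ infinite gives $x_\alpha\subsetneq^* x_\beta$ for all $\beta<\alpha$.

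For the verification, suppose toward a contradiction that some $\mathbb{P}\in\mathcal{P}$, some $p$, and some (nice) name $\dot y$ satisfy $p\Vdash_{\mathbb{P}}$ ``$\dot y\in[\omega]^\omega$ is a pseudointersection of the tower''. Choose a countable $M\prec H_\theta$ containing $\mathbb{P},\dot y,p,\mathcal{P}$, the bookkeeping, and the sequence $\langle x_\alpha:\alpha<\mathfrak{c}\rangle$ as a single element. Set $Q=\mathbb{P}\cap M$ and let $\dot z$ be the nice $Q$-name reflecting $\dot y$ (with antichains $A_n\subseteq Q$ maximal below $\{r\in Q: r\Vdash_{\mathbb{P}}\check n\in\dot y\}$). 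The task $(\mathbb{P},Q,\dot z)$ lies in $M$, hence is assigned some stage $\alpha^*\in M$, so $w_{\alpha^*},x_{\alpha^*}\in M$. Let $q\leq p$ be $(M,\mathbb{P})$-generic. For each $k$ the set $E_k=\{q'\in Q: q'\Vdash_Q \dot z\cap w_{\alpha^*}\text{ finite}\}\cup\{q'\in Q:\exists n>k\,(n\in w_{\alpha^*}\setminus x_{\alpha^*}\wedge q'\Vdash_Q\check n\in\dot z)\}$ is dense below $p$ in $Q$ by the spreading guarantee, and $E_k\in M$. Since $q$ is $(M,\mathbb{P})$-generic, for any generic $G\ni q$ the filter $G\cap Q$ is $M$-generic and meets each $E_k$, while $\dot y[G]=\dot z[G\cap Q]$; because $\dot y[G]\subseteq^* x_{\alpha^*}\subseteq w_{\alpha^*}$ is infinite the ``finite'' alternative cannot occur in $G$, so for every $k$ some $n>k$ lies in $(w_{\alpha^*}\setminus x_{\alpha^*})\cap\dot y[G]$. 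Thus $\dot y[G]\not\subseteq^* x_{\alpha^*}$, a contradiction. The step I expect to be the main obstacle is precisely this passage between $\mathbb{P}$-forcing and the auxiliary $Q$-forcing, namely checking that the reflected name evaluated by the $M$-generic filter $G\cap Q$ recovers $\dot y[G]$; this is where properness (existence of $(M,\mathbb{P})$-generic conditions) enters, and it is exactly what makes the $\mathfrak{c}$ countable tasks suffice against all $2^{\mathfrak{c}}$ names.
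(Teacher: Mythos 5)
Your overall architecture (recursion of length $\mathfrak{c}$ using $\mathfrak{p}=\mathfrak{c}$ at limit stages, one diagonalization task per stage, a density argument at the end) matches the paper's, but the device you introduce to beat the cardinality of the name space does not work as written, and it is also unnecessary. The paper's nice names are by definition built from \emph{countable} antichains, and properness guarantees that every $\mathbb{P}$-name for a real equals such a name below a dense set of conditions; hence there are only $|\mathbb{P}|^{\aleph_0}=\mathfrak{c}$ relevant names, and one simply enumerates all triples $(\mathbb{P},p,\dot y)$ with $\dot y$ nice, splitting at each stage a pseudointersection into disjoint infinite pieces $x^0, x^1$ and keeping the piece $x^i$ for which some $q\leq p$ forces $\dot y\not\subseteq^* x^i$ (such $i$ exists since $p$ cannot force $\dot y$ to be almost contained in both). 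So the ``$2^{\mathfrak{c}}$ nice names'' obstacle that you place at the heart of the argument is not actually there.

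The genuine gap is in your verification. First, the task $(\mathbb{P},Q,\dot z)$ does \emph{not} lie in $M$: one never has $Q=\mathbb{P}\cap M\in M$ for a countable $M\prec H_\theta$ with $\mathbb{P}$ uncountable, since otherwise $M$ would see that $Q$ is a countable proper subset of $\mathbb{P}$ and produce an element of $(\mathbb{P}\cap M)\setminus Q$. Consequently the stage $\alpha^*$ at which the bookkeeping handles this task need not belong to $M$, and $w_{\alpha^*}$, $x_{\alpha^*}$, $Q$, $\dot z$ need not be elements of $M$, so $E_k\notin M$. Second, and independently, $(M,\mathbb{P})$-genericity of $q$ only guarantees that $G\cap M$ meets sets of the form $D\cap M$ with $D\in M$ dense in $\mathbb{P}$; an arbitrary dense subset of the countable poset $Q$ --- which is all that $E_k$ is --- can be disjoint from $G\cap Q$. (For instance, for a countably closed $\mathbb{P}$ let $q$ be a lower bound of an $M$-generic filter $H\subseteq Q$; then $Q\setminus H$ is dense in $Q$ but is missed.) So the step ``$G\cap Q$ meets each $E_k$'' is unjustified, and it is exactly the hinge on which your final contradiction rests --- indeed the step you yourself flagged as the main obstacle. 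Any repair must make the stage-$\alpha^*$ diagonalization visible to $M$, i.e.\ must parametrize the tasks by countably determined data that the bookkeeping can hand to $M$ in advance; carried out, this collapses to the paper's enumeration of nice names with countable antichains.
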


Here $\mathfrak{c}$ is the size of the continuum. 

\begin{proof}
Let us call a $\mathbb{P}$ name $\dot x$ for a real a nice name whenever it has the form $\bigcup_{n \in \omega} \{ (p,\check n) : p \in A_{n} \}$ where the $A_n$'s are countable antichains in $\mathbb{P}$. Remember that when $\mathbb{P}$ is proper, then for any $\mathbb{P}$ name $\dot x$ for a real and any $p \in \mathbb{P}$, there is a nice name $\dot y$ and $q \leq p$ such that $q \forces \dot y = \dot x$. The number of nice $\mathbb{P}$ names is $\vert\mathbb{P}\vert^{\aleph_0}$.

Let us enumerate all pairs $\seq{ (\mathbb{P}_\alpha, p_\alpha, \dot y_\alpha): \alpha < \mathfrak{c}}$ where $p_\alpha \in \mathbb{P}_\alpha$, $\mathbb{P}_\alpha \in \mathcal{P}$ and $\dot y_\alpha$ is a nice $\mathbb{P}_\alpha$ name such that $p_\alpha \forces \dot y_\alpha \in [\omega]^\omega$.

We construct a tower $\seq{x_\alpha : \alpha < \mathfrak{c}}$ recursively. At step $\alpha$ we first choose a pseudointersection $x$ of $\seq{x_\xi : \xi < \alpha}$ (here we use $\alpha <\mathfrak{p}$). Next we partition $x$ into two disjoint infinite subsets $x^0, x^1$. Now note that $p_\alpha \forces_{\mathbb{P}_\alpha} (\dot y_\alpha \subseteq^* x^0 \wedge \dot y_\alpha \subseteq^* x^1)$ is impossible. Thus we find $i \in 2$ and $q_\alpha \leq p_\alpha$ such that $q_\alpha \forces_{\mathbb{P}_\alpha} \dot y_\alpha \not\subseteq^* x^i$. Let $x_\alpha = x^i$.

Now let $\dot x$ be an arbitrary $\mathbb{P}$ name for a real for some $\mathbb{P} \in \mathcal{P}$. We see easily that the set $D = \{ q \in \mathbb{P} : \exists \alpha < \mathfrak{c} ( q \forces \dot x \not\subseteq^* x_\alpha) \}$ is dense. Namely for any $p$ we find $(\mathbb{P}_\alpha, p_\alpha, \dot y_\alpha)$ where $p_\alpha \leq p$ and $p_\alpha \forces \dot x = \dot y_\alpha$. Then we have $q_\alpha \leq p$ with $q_\alpha \in D$. 
\end{proof}

\begin{definition}
 A forcing notion $(\mathbb{P},\leq)$ is Suslin if \begin{enumerate}
                                   \item $\mathbb{P} \subseteq 2^\omega$ is analytic,
                                   \item $\leq \subseteq 2^\omega \times 2^\omega$ is analytic,
                                   \item the incompatibility relation $\perp \subseteq 2^\omega \times 2^\omega$ is analytic (and in particular Borel). 
                                  \end{enumerate}

\end{definition}

The next thing we want to show is that (in $L$) for $\mathcal{P}$ the collection of all proper Suslin posets, we can get an indestructible maximal tower which is coanalytic.
\begin{theorem}
\label{thm:indestructibletower}
 (V=L) There is a $\Pi^1_1$ maximal tower indestructible by any proper Suslin poset. 
\end{theorem}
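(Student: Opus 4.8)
The plan is to merge the two constructions we already have: the $\Pi^1_1$ coding machinery from Theorem~\ref{thm:coanalytictower} and the diagonalization against forcing names from the preceding theorem (the $\mathfrak p = \mathfrak c$ result). In $L$ we have $\mathfrak p = \mathfrak c = \omega_1$, so we recursively build a tower $\seq{x_\xi : \xi < \omega_1}$ where at each step we not only satisfy the bookkeeping conditions (1)--(4) from the proof of Theorem~\ref{thm:coanalytictower} that guarantee a $\Pi^1_1$ definition, but also diagonalize against one triple $(\mathbb P, p, \dot y)$ consisting of a proper Suslin poset, a condition, and a (suitably coded) nice name for a real. The crucial point that makes this work within the coding framework is that a Suslin poset is coded by a single real (the analytic codes for $\mathbb P$, $\leq$ and $\perp$), so the enumeration of all relevant triples can be interleaved with the enumeration $\seq{y_\xi : \xi < \omega_1}$ of $[\omega]^\omega$ and absorbed into the same $<_L$-least bookkeeping. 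Since there are only $\omega_1$ many Suslin codes in $L$, a single recursion of length $\omega_1$ suffices to handle them all.

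First I would fix, via $<_L$, an enumeration $\seq{(\mathbb P_\xi, p_\xi, \dot y_\xi) : \xi < \omega_1}$ of all triples where $\mathbb P_\xi$ is (coded by a real that is) a proper Suslin poset, $p_\xi \in \mathbb P_\xi$, and $\dot y_\xi$ is a nice $\mathbb P_\xi$-name forced by $p_\xi$ to be in $[\omega]^\omega$; I would arrange a single $<_L$-recursion that simultaneously enumerates $[\omega]^\omega$ for maximality and these triples for indestructibility. At stage $\xi$ I first take (the $<_L$-least) pseudointersection $x$ of $\seq{x_\nu : \nu < \xi}$ with $|x \cap O| = |x \cap E| = \omega$ and $|y_\xi \cap (\omega\setminus x)| = \omega$, as in the Lemma; I then split $x = x^0 \sqcup x^1$ into two infinite halves, observe that $p_\xi \forces (\dot y_\xi \subseteq^* x^0 \wedge \dot y_\xi \subseteq^* x^1)$ is impossible, choose $i \in 2$ and $q_\xi \le p_\xi$ with $q_\xi \forces \dot y_\xi \not\subseteq^* x^i$, and set $x_\xi = x^i$; finally I take $z_\xi$ to be the $<_L$-least code of $\delta(\xi)$ and thin $x_\xi$ as in the Lemma so that $z_\xi \le_T x_\xi$, preserving the split-induced property. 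The density argument from the previous theorem then shows that for any name $\dot x$ for a real added by any $\mathbb P \in \mathcal P$, the set of conditions forcing $\dot x \not\subseteq^* x_\xi$ for some $\xi$ is dense, so the tower remains maximal in the extension.

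The main obstacle I anticipate is verifying that the diagonalization step can be carried out \emph{inside} the $L$-hierarchy at a controlled level $L_{\delta(\xi)+\omega}$, so that the $\Pi^1_1$ coding of Theorem~\ref{thm:coanalytictower} still goes through verbatim. Concretely, the statement ``$p_\xi \forces_{\mathbb P_\xi} \dot y_\xi \not\subseteq^* x^i$'' must be decided correctly by a small model $M_r$, which requires that the forcing relation for Suslin posets over nice names for reals be absolute between $L_{\delta(\xi)+\omega}$ (or $M_r \models \mathrm{Th}$) and $V=L$. This is exactly where Suslinness is essential: because $\mathbb P$, $\le$, and $\perp$ are analytic, the relevant forcing statements about reals are themselves projective and absolute by Shoenfield-type arguments, so a transitive model satisfying a sufficient finite fragment $\mathrm{Th}$ of ZFC will compute them correctly. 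I would therefore enlarge $\mathrm{Th}$ to also guarantee this forcing-absoluteness, extend the formula $\Psi(v)$ to encode clauses recording the chosen $q_\xi$ and $i$, and then the formula $\Phi(r,x)$ and the equivalence $\exists \xi (x = x_\xi) \leftrightarrow \exists r \in L_{\omega_1^x}\cap 2^\omega\, \Phi(r,x)$ yield the $\Pi^1_1$ definition exactly as before. The remaining bookkeeping — that $z_\xi \le_T x_\xi$ still forces $\delta(\xi)+\omega < \omega_1^{x_\xi}$ and that all witnesses live in $L_{\delta(\xi)+\omega}$ — is routine once the absoluteness of the forcing step is established.
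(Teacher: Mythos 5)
Your high-level plan (interleave the $\mathfrak p=\mathfrak c$ diagonalization with the Miller coding) and your outer verification (the density argument via nice names) match the paper, but the step you yourself flag as the main obstacle is a genuine gap, and the way you propose to close it does not work. The statement ``$q \forces_{\mathbb P} \dot y \not\subseteq^* x^i$'' for a Suslin poset unravels to something of the form $\forall q'\leq q\,\forall n\,\exists q''\leq q'\,\exists m\geq n\,(m\notin x^i \wedge A_m \text{ is predense below } q'')$, where predensity of the countable antichain $A_m$ is itself a $\Pi^1_1$ statement quantifying over all conditions. None of this is upward absolute from a countable transitive model $L_{\delta(\xi)+\omega}$ (equivalently $M_r$) to $L$: such a model can believe $A_m$ is predense below $q''$ while missing the real witnessing incompatibility, so it can compute the forcing relation incorrectly. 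Enlarging the finite fragment $\mathrm{Th}$ cannot repair this --- satisfying more of ZFC does not make a countable transitive model compute $\Pi^1_1$ or $\Sigma^1_2$ facts about reals correctly, and Shoenfield absoluteness applies to inner models and generic extensions, not to arbitrary countable transitive models. (A secondary problem of the same kind: ``$\mathbb P$ is proper'' is not projective, so your enumeration of triples is not something the small models can recognize either.)

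The paper's proof is built precisely to avoid ever evaluating the forcing relation inside the $L$-hierarchy. The construction enumerates all \emph{trees} $T_\alpha$ on $\omega\times\omega$, and at stage $\xi$ records one of two \emph{locally witnessed} alternatives relative to the disjoint pseudointersections $x^0,x^1\in L_\delta$: either (a) an actual branch $(x,w)\in[T_\xi]\cap L_\delta$ with $x\subseteq^* x^0$ (a $\Sigma^1_1$ fact, upward absolute because the witness is present), or (b) $f(T_\xi,x^0)\in\WO$ together with an order-preserving map into an ordinal inside $L_\delta$, where $f$ is a fixed recursive map with $f(T,y)\in\WO$ iff every $x\in p[T]$ meets $\omega\setminus y$ infinitely (the ranking function certifies wellfoundedness in an upward absolute way). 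Forcing enters only in the outer verification in $V=L$: for a proper Suslin $\mathbb P$, $p$, and nice name $\dot x$, the trace set $\{ z : \exists q\leq p\, (n\in z \leftrightarrow q\not\forces n\in\omega\setminus\dot x)\}$ is analytic, hence equals $p[T_\alpha]$ for some $\alpha$, and the dichotomy chosen at stage $\alpha$ then yields $p\not\forces\dot x\subseteq^* x_\alpha$ in either case. Without this replacement of the forcing statement by an absolute tree dichotomy, your construction is not $\Pi^1_1$-codable, so the proposal as written does not establish the theorem.
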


\begin{proof}
First let us note that there is a recursive map $f \colon \operatorname{Tree} \times [\omega]^\omega \to 2^\omega$, where $\operatorname{Tree}$ is the set of trees on $\omega \times \omega$, such that $f(T,y) \in \WO$ iff $\forall x \in p[T] ( \card{x \cap (\omega \setminus y)} = \omega)$. Fix this map $f$. 

 For the construction of our tower we now enumerate via the canonical well order of $L$ all trees $\seq{T_\alpha : \alpha < \omega_1}$ on $\omega \times \omega$. Now as in the proof of Theorem~\ref{thm:coanalytictower} we define a sequence $\seq{\delta(\xi),z_\xi, x_\xi : \xi < \omega_1}$ with
 
  \begin{itemize}
  \item $\delta(\xi)$ is a countable ordinal
  \item $z_\xi \in 2^\omega \cap {L}_{\delta(\xi)+\omega}$
  \item $x_\xi \in [\omega]^{\omega} \cap {L}_{\delta(\xi)+\omega}$
 \end{itemize}
 and the following properties: 
 
  \begin{enumerate}
  \item $\seq{x_\nu : \nu < \xi}$ is a tower and $\forall \nu < \xi (\card{x_\nu\cap O}) = \omega \wedge \card{x_\nu \cap E} = \omega)$.
  \item $\delta(\xi)$ is the least ordinal $\delta$ greater than $\sup_{\nu < \xi} \delta(\xi)$ so that 
  \begin{itemize}
   \item $\seq{\delta(\nu),z_\nu,x_\nu : \nu < \xi}, T_\xi  \in {L}_{\delta}$,
   \item there are disjoint pseudointersections $x^0,x^1 \in L_\delta$ of $\seq{x_\nu : \nu < \xi}$ both hitting $O$ and $E$ infinitely,
   \item either (a) there is $(x,w) \in [T_\xi] \cap L_\delta$ such that $x \subseteq^* x^0$ or (b) $f(T_\xi, x^0) \in \WO$, $\|f(T_\xi, x^0)\| < \delta$ and there is in $L_\delta$ a order preserving map $(\omega,E_{f(T_\xi, x^0)}) \to \|f(T_\xi, x^0)\|$, 
   \item and ${L}_{\delta}$ projects to $\omega$.
  \end{itemize}
  \item $z_\xi$ is the $<_L$ least code for the ordinal $\delta(\xi)$.
  \item $x_\xi$ is $<_L$ least so that $x_\xi \subseteq^* x^1$ or $x_\xi \subseteq^* x^0$ depending on whether (a) or (b) holds true, $\card{x_\xi \cap O} = \omega$, $\card{x_\xi \cap E} = \omega$ and $z_\xi \leq_T x_\xi$.
 \end{enumerate}

 As in the proof of Theorem~\ref{thm:coanalytictower} we see that this definition determines a tower $\seq{x_\xi : \xi < \omega_1}$ which is $\Pi^1_1$. 
 
 Now let us note the following for a proper Suslin poset $\mathbb{P}$. Whenever $\dot x$ is a nice $\mathbb{P}$ name for a real and $p \in \mathbb{P}$, then the set $$\{ z \in [\omega]^\omega : \exists q \leq p (n \in z \leftrightarrow q \not\forces n \in \omega \setminus \dot x) \}$$ is analytic ($q \not\forces n \in \omega \setminus \dot x$ iff $\exists r \in \dom \dot x [(r,n)\in \dot x \wedge r \not\perp q])$. 
 
 Thus for any $\mathbb{P},p\in \mathbb{P}$ and $\dot x$ a nice name there is $\alpha < \omega_1$ so that $$p[T_\alpha]= \{ z \in [\omega]^\omega : \exists q \leq p (n \in z \leftrightarrow q \not\forces n \in \omega \setminus \dot x) \}.$$
 
 Consider $x_\alpha$ and the respective disjoint sets $x^0$ and $x^1$ at stage $\alpha$ of the construction. There are two options: 
 
 \begin{enumerate}[(a)]
  \item There is $(x,w) \in [T_\alpha]$ such that $x \subseteq^* x^0$. In this case we have chosen $x_\alpha \subseteq^* x^1$ and there is $q \leq p$ so that $\vert \{n \in \omega : q\not\forces n \notin \dot x \} \cap x^1 \vert < \omega$. In particular $p \not\forces \dot x \subseteq^* x_\alpha$.
  \item Or $L_{\delta(\alpha)} \models$ ``$(\omega,E_{f(T_\xi,x^0)})$ \text{ is isomorphic to an ordinal}''. This means that $L \models$ ``$(\omega,E_{f(T_\xi,x^0)})$ \text{ is isomorphic to an ordinal}'' and this means that for any $x \in p[T_\alpha]$, $x$ has infinite intersection with $\omega \setminus x^0$. In this case we chose $x_\alpha \subseteq^* x^0$. Now if $q \leq p$ and $n \in \omega$ are arbitrary we can find $r \leq q$ and $m \geq n$ such that $r \forces m \in \dot x \setminus x_\alpha$. This means again that $p \not\forces \dot x \subseteq^* x_\alpha$. 
 \end{enumerate}
 
 Thus we have shown that for any proper Suslin poset $\mathbb{P}$, $\dot x$ an arbitrary $\mathbb{P}$ name for a real and $p \in \mathbb{P}$, $p \not\forces \dot x \text{ is a pseudointersection of } \langle x_\xi : \xi < \omega_1 \rangle$.

 \end{proof}

\section{$\omega_1$ and $\mathbf{\Sigma}^1_2$ definitions}

\begin{definition}
 Let $\mathcal{F}$ be a filter on $\omega$ containing all cofinite sets. Then Mathias forcing relative to $\mathcal{F}$ is the poset $\BbM(\mathcal{F})$ consisting of pairs $(s,F) \in [\omega]^{<\omega} \times \mathcal{F}$ such that $\max s < \min F$. The extension relation is defined by $(s,F) \leq (t,E)$ iff $t \subseteq s$, $F \subseteq E$ and $t \setminus s \subseteq E$.
\end{definition}

\begin{lemma}
\label{lem:destroysigma12tower}
 Assume that $X$ is a $\mathbf{\Sigma}^1_2$ definable subset of $[\omega]^\omega$, linearly ordered with respect to $\subseteq^*$. Then there is a ccc forcing notion $\Q$ consisting of reals so that for any transitive model ${V}' \supseteq {V}^\Q$ (with the same ordinals), the reinterpretation of $X$ in $V'$ is not an ilt in ${V}'$.
\end{lemma}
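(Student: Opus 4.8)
The plan is to destroy $X$ by adding a pseudointersection with a $\sigma$-centered Mathias forcing, and then to make the destruction robust by a Shoenfield absoluteness reflection. First I set $\mathcal{F} = \{ a \subseteq \omega : \exists x \in X\, (x \subseteq^* a) \}$, together with all cofinite sets. Since $X$ is linearly ordered by $\subseteq^*$, any two elements of $X$ have one almost contained in the other, so $\mathcal{F}$ is closed under finite intersections and is a free filter. Let $\Q = \BbM(\mathcal{F})$ be the Mathias forcing relative to $\mathcal{F}$, as defined above. Conditions sharing a stem $s$ are pairwise compatible (intersect the side conditions, using that $\mathcal{F}$ is a filter), so $\Q$ is $\sigma$-centered, hence ccc, and each condition $(s,F)$ is (coded by) a real, so $\Q$ consists of reals. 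The generic real $r$ is a pseudointersection of $\mathcal{F}$, and in particular $r \subseteq^* y$ for every $y \in X^V$.

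Now fix any transitive $V' \supseteq V^\Q$ with the same ordinals. By Shoenfield upward absoluteness for $\mathbf{\Sigma}^1_2$ formulas, $X^{V^\Q} \subseteq X^{V'}$, and the reinterpretation of $X$ is computed from the same (absolute) Shoenfield tree. I split into two cases. If $X^{V^\Q}$ is not linearly ordered by $\subseteq^*$, then an incomparable pair of its elements persists to $V'$, so $X^{V'}$ is not linearly ordered and hence not an ilt. Otherwise $X^{V^\Q}$ is linearly ordered, and I argue by contradiction: suppose $X^{V'}$ is an ilt. Since $r$ is a pseudointersection of $X^V \subseteq X^{V'}$ while $X^{V'}$ has none, there is $x \in X^{V'}$ with $r \not\subseteq^* x$. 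For each $y \in X^V$ linearity gives $x \subseteq^* y$ or $y \subseteq^* x$; the latter would yield $r \subseteq^* y \subseteq^* x$, contradicting $r \not\subseteq^* x$, so $x \subsetneq^* y$ for all $y \in X^V$. Fixing any $y_0 \in X^V$, the statement
$\Theta(r) \equiv \exists x\,\big( x \in X \wedge r \setminus x \text{ is infinite} \wedge \exists y\, ( y \in X \wedge r \subseteq^* y \wedge x \subseteq^* y) \big)$
holds in $V'$, witnessed by this $x$ and $y = y_0$. Crucially, $\Theta(r)$ is $\mathbf{\Sigma}^1_2(p,r)$, where $p$ is the parameter in the definition of $X$, since the $X$-membership clauses are $\mathbf{\Sigma}^1_2$ and everything else is arithmetic in $r,x,y$.

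By Shoenfield absoluteness between $V^\Q$ and $V'$, $\Theta(r)$ then already holds in $V^\Q$. I expect the main obstacle to be the final step: refuting $\Theta(r)$ in $V^\Q$ by a genericity argument for $\BbM(\mathcal{F})$. Here one must rule out a real $x \in X^{V^\Q}$ that lies $\subseteq^*$-below the whole ground-model tower yet satisfies $r \setminus x$ infinite; note that a crude ``every pseudointersection of $\mathcal{F}$ almost contains $r$'' is false (e.g. $r$ meets both $O$ and $E$), so the argument must genuinely use that $x$ lands in the $\mathbf{\Sigma}^1_2$ set $X$. The plan is a density argument: given a nice name $\dot{x}$ and a condition $(s,F) \forces \dot{x} \in X \wedge r \setminus \dot{x} \text{ infinite}$, I would repeatedly extend the stem by elements of the generic while thinning $F$ inside $\mathcal{F}$, forcing membership decisions on $\dot{x}$, so as to drive $r \subseteq^* \dot{x}$ and contradict $r \setminus \dot{x}$ being infinite; the delicate point is maintaining compatibility of these decisions with $\dot{x}$ remaining below all of $X^V$ while keeping the side conditions in $\mathcal{F}$. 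Once $\neg\Theta(r)$ is established in $V^\Q$, it transfers back up to $V'$, contradicting the existence of the witness $x$ and completing Case B, so $X^{V'}$ is not an ilt in either case.
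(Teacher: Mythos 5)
Your opening moves match the paper's in spirit (a Mathias-type forcing over a filter attached to $X$, ccc-ness, and a final $\mathbf{\Pi}^1_2$ Shoenfield transfer to $V'$), but the heart of the proof is missing: you never actually establish $\neg\Theta(r)$ in $V^\Q$, and the density argument you sketch for it does not work. The difficulty you yourself flag is the real one: a condition $(s,F)$ in $\BbM(\mathcal{F})$ only controls the generic $r$ and the side sets $F \in \mathcal{F}$; it gives you no grip on a name $\dot{x}$ that is forced into the $\mathbf{\Sigma}^1_2$ set $X$. A new element of $X^{V^\Q}$ could a priori sit strictly $\subseteq^*$-below every element of $X^V$, and nothing about your filter $\mathcal{F} = \{a : \exists x \in X\,(x \subseteq^* a)\}$ forces $r$ to be almost contained in such a real; as you note, being a pseudointersection of $X^V$ is not enough (e.g.\ $r \cap E$ is one). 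So the case where the lemma has actual content is exactly the case your argument leaves open.

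The missing idea is descriptive-set-theoretic, not forcing-combinatorial. Write $X = \bigcup_{\xi < \omega_1} X_\xi$ with each $X_\xi$ analytic (via the norm representation of $\mathbf{\Sigma}^1_2$ sets). By Theorem~\ref{thm:noanalyticmlt} no analytic set is an ilt, so (when $X$ has no pseudointersection in $V$) linearity gives for each $\xi$ an element $x_\xi \in X^V$ with $x_\xi \subseteq^* y$ for all $y \in X_\xi$. Take $\mathcal{F}$ to be the filter generated by $\{x_\xi : \xi < \omega_1\}$ and force with $\BbM(\mathcal{F})$. The point is that ``$x_\xi$ is a lower bound for $X_\xi$'' is a $\mathbf{\Pi}^1_1$ statement and hence persists to $V^\Q$, and since $\Q$ is ccc the reinterpretation of $X$ in $V^\Q$ is still $\bigcup_{\xi < \omega_1^V} X_\xi$. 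Thus \emph{every} element of $X^{V^\Q}$, including the new ones, lies $\supseteq^*$-above some ground-model $x_\xi$, and therefore above the generic $r$. This is precisely the step that rules out the problematic $\dot{x}$ and makes $r$ a pseudointersection of the reinterpreted $X$; the $\mathbf{\Pi}^1_2$ statement ``$r$ is a pseudointersection of $X$'' then transfers to any $V' \supseteq V^\Q$ by Shoenfield (note this last transfer is needed because $\omega_1^V$ may be countable in $V'$, so the union may acquire new pieces there). Without the decomposition into analytic pieces and their ground-model lower bounds inside $X$, I do not see how your plan can be completed.
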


\begin{proof}
 As $X$ is $\mathbf{\Sigma}^1_2$, $X$ can be written as a union $\bigcup_{\xi < \omega_1} X_\xi$ of analytic sets. Namely whenever $X = p[Y]$ where $Y \subseteq [\omega]^\omega \times 2^\omega$ is coanalytic then $Y$ can be written as $\{ (x,w) : f(x,w) \in \WO \}$ for some fixed continuous function $f$ related to the definition of $Y$ (see \cite{YM1} for more details). Then $X_\xi$ is defined as $\{ x \in [\omega]^\omega : \exists w \in 2^\omega (\| f(x,w) \| = \xi) \}$.  
 
 Moreover we see that in any model $W \supseteq V$ where $\omega_1^W = \omega_1^V$, the reinterpretation of $X$ is the union of the reinterpretations of the $X_\xi$.  

 If $X$ has a pseudointersection $x$ in $V$, then $x$ will stay a pseudointersection of (the reinterpretation of) $X$ in any extension by absoluteness. The statement $ \forall y (y \notin X \vee x \subseteq^* y) $ is $\mathbf{\Pi}^1_2$. In this case let $\mathbb{Q}$ be the trivial poset.   

 If $X$ is inextendible in $V$, then for any $\xi < \omega_1$ there is $x_\xi \in X$ so that $\forall y \in X_\xi (x_\xi \subseteq^* y)$. As $X$ is linearly ordered with respect to $\subseteq^*$, $\{ x_\alpha : \alpha < \omega_1 \}$ generates a non-principal filter $\mathcal{F}$. Let $\Q = \BbM(\mathcal{F})$. Then in ${V}^\Q$ there is a real $x$ so that $x \subseteq^* x_\alpha$ for every $\alpha < \omega_1$. By absoluteness $\forall y \in X_\xi (x_\xi \subseteq^* y)$ will still hold true in $V^\mathbb{Q}$. In particular $\forall y \in X_\xi (x \subseteq^* y)$ will hold true for any $\xi  < \omega_1^V$.
 
As $\mathbb{Q}$ is ccc we have that $\omega_1^{V^\mathbb{Q}} = \omega_1^V$. This implies that $x$ is actually a pseudointersection of $X$ in $V^\mathbb{Q}$. Again, this will hold true in any extension.  
\end{proof}

\begin{theorem}
\label{thm:omega1}
If there is a $\Sigma^1_2$ ilt, then $\omega_1 = \omega_1^{L}$. More generally, the existence of a $\Sigma^1_2(x)$ ilt implies $\omega_1 = \omega_1^{L[x]}$.
\end{theorem}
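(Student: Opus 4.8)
The plan is to prove the contrapositive of the general statement: assuming $\omega_1 > \omega_1^{L[x]}$, I will show that no $\Sigma^1_2(x)$ set can be an ilt. So suppose $X$ is $\Sigma^1_2(x)$ and, towards a contradiction, that $X$ is an ilt in $V$. The key idea is to apply Lemma~\ref{lem:destroysigma12tower} not in $V$ but inside the inner model $L[x]$, and then to realize the resulting generic back in $V$ using the collapse of $\omega_1^{L[x]}$.

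First I would check that the set $X^{L[x]}$ defined by the same $\Sigma^1_2(x)$ formula inside $L[x]$ is linearly ordered by $\subseteq^*$ there. The property of being linearly ordered by $\subseteq^*$, namely $\forall y\, \forall z\, (y \notin X \vee z \notin X \vee y \subseteq^* z \vee z \subseteq^* y)$, is $\Pi^1_2(x)$ since membership in $X$ is $\Sigma^1_2(x)$; hence it is absolute between $L[x]$ and $V$ by Shoenfield absoluteness. As $X$ is an ilt, and in particular linearly ordered, in $V$, it follows that $X^{L[x]}$ is linearly ordered in $L[x]$. Working now inside $L[x]$, Lemma~\ref{lem:destroysigma12tower} produces a ccc poset $\mathbb{Q} \in L[x]$, consisting of reals, such that in every transitive model $V' \supseteq L[x]^{\mathbb{Q}}$ with the same ordinals, the reinterpretation of $X$ fails to be an ilt.

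The crucial step is to obtain a $\mathbb{Q}$-generic filter over $L[x]$ already in $V$. Since $\mathbb{Q}$ is Mathias forcing relative to a filter generated by $\omega_1^{L[x]}$-many reals, we have $\vert \mathbb{Q}\vert^{L[x]} \le \omega_1^{L[x]}$; as $\mathbb{Q}$ is ccc every maximal antichain is countable, so by GCH in $L[x]$ there are at most $\omega_1^{L[x]}$-many maximal antichains of $\mathbb{Q}$ in $L[x]$. Because $\omega_1^{L[x]}$ is a countable ordinal in $V$, both $\mathbb{Q}$ and its family of $L[x]$-maximal antichains are countable in $V$, so I can enumerate these antichains and diagonalize to build a descending sequence of conditions meeting each of them, yielding a filter $G \in V$ that is $\mathbb{Q}$-generic over $L[x]$. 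Then $L[x][G] \subseteq V$ are transitive models with the same ordinals and $V \supseteq L[x]^{\mathbb{Q}}$, so taking $V' = V$ in the conclusion of the Lemma shows that the reinterpretation of $X$ in $V$ is not an ilt. But this reinterpretation is just $X$ itself, contradicting the assumption that $X$ is an ilt in $V$. Hence $\omega_1 = \omega_1^{L[x]}$, and the lightface case follows by taking $x$ recursive.

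The main obstacle is the cardinality bookkeeping in the third step: one must verify carefully that $\mathbb{Q}$ has size at most $\omega_1^{L[x]}$ in $L[x]$ and, more delicately, that the ccc together with GCH in $L[x]$ bounds the number of maximal antichains of $\mathbb{Q}$ by $\omega_1^{L[x]}$, so that a single real $G \in V$ can meet them all. Everything else—the $\Pi^1_2$ absoluteness of linear orderedness, and the identification of the reinterpretation of $X$ in $V' = V$ with $X$—is routine.
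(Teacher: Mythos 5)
Your proposal is correct and follows essentially the same route as the paper's proof: apply Lemma~\ref{lem:destroysigma12tower} inside $L[x]$, use the countability of $\omega_1^{L[x]}$ in $V$ to realize a $\mathbb{Q}$-generic over $L[x]$ within $V$, and conclude via the lemma that $X$ as reinterpreted in $V$ is not an ilt. The paper compresses the genericity step into the single observation that $\mathcal{P}(\omega)\cap L$ is countable in $V$; your explicit counting of maximal antichains (via ccc and GCH in $L[x]$) and your Shoenfield check that $X^{L[x]}$ is linearly ordered are exactly the details being elided there.
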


\begin{proof}
 We only prove the first part as the rest follows similarly. 
 
 Suppose that $X$ is a $\Sigma^1_2$ ilt and $\omega_1^L < \omega_1$. Apply Lemma~\ref{lem:destroysigma12tower} to (the definition of) $X$ in $L$ to get the respective poset $\mathbb{Q}$ in $L$. As $\omega_1^L < \omega_1$, $V \models \vert \mathcal{P}(\omega) \cap L\vert = \omega$. But this means there is a $\mathbb{Q}$ generic $x \in V$ over $L$. $L[x] \subseteq V$, thus by Lemma~\ref{lem:destroysigma12tower} $X$ has a pseudointersection in $V$, contradicting our assumption. 
\end{proof}

\begin{remark}
We think that the proofs of Lemma~\ref{lem:destroysigma12tower} and Theorem~\ref{thm:omega1} showcase something interesting about Schoenfield absoluteness. Recall that Schoenfield's absoluteness theorem says that $\Sigma^1_2$ formulas are absolute between any inner models $W \subseteq W'$, but it does not say anything about the relationship between $\omega_1^W$ and $\omega_1^{W'}$. In fact in many applications of $\Sigma^1_2$ absoluteness $W$ and $W'$ have the same $\omega_1$ (e.g. when $W'$ is a ccc or proper forcing extension of $W$). But in this case it can be deduced directly from analytic absoluteness and the representation of $\Sigma^1_2$ sets as the same $\omega_1$ union of analytic set in any extension with the same $\omega_1$. The reason is that the existential quantifier $\exists \alpha < \omega_1$ stays the same. So the full strength of Schoenfield absoluteness is only needed in the case where $\omega_1^W$ is countable in $W'$ and this is the case that we crucially used in the proof of Theorem~\ref{thm:omega1}. 
\end{remark}

We also want to remark that the proofs of Lemma~\ref{lem:destroysigma12tower} and Theorem~\ref{thm:omega1} are very general and can be applied to many other maximal combinatorial families. For example A. T\"ornquist has shown the following theorem in \cite{AT1}, using a similar argument. 

\begin{theorem}
If there is a $\Sigma^1_2$ mad family, then $\omega_1 = \omega_1^{L}$. More generally, the existence of a $\Sigma^1_2(x)$ mad family implies $\omega_1 = \omega_1^{L[x]}$.
\end{theorem}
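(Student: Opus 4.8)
The plan is to prove the exact analogue of Lemma~\ref{lem:destroysigma12tower} for almost disjoint families and then to run the proof of Theorem~\ref{thm:omega1} essentially verbatim. Concretely, I would first establish the following: if $X$ is a $\mathbf{\Sigma}^1_2$ almost disjoint family, then there is a $\sigma$-centered (hence ccc) forcing $\mathbb{Q}$ consisting of reals such that for any transitive model $V' \supseteq V^{\mathbb{Q}}$ with the same ordinals, the reinterpretation of $X$ is not maximal in $V'$. As in Lemma~\ref{lem:destroysigma12tower}, write $X = \bigcup_{\xi < \omega_1} X_\xi$ with each $X_\xi$ analytic, noting that each $X_\xi \subseteq X$ is itself an almost disjoint family and that in any model with the same $\omega_1$ the reinterpretation of $X$ is the union of the reinterpretations of the $X_\xi$. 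If $X$ is already non-maximal in $V$, a witnessing real is added by the trivial forcing, so assume $X$ is mad in $V$.

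The heart of the argument is a single $\subseteq^*$-descending sequence that simultaneously avoids every piece. Recursively I would construct $\langle c_\xi : \xi < \omega_1 \rangle$ in $[\omega]^\omega$, descending mod finite, so that $c_\xi$ is almost disjoint from every member of $X_\xi$. At stage $\xi$ the countably many $c_\eta$ ($\eta < \xi$) form a tower, hence have a pseudointersection $d_\xi$; the trace $\{ a \cap d_\xi : a \in X_\xi,\ |a \cap d_\xi| = \omega \}$ is an analytic almost disjoint family on $d_\xi \cong \omega$, so by Mathias' theorem \cite{ADM1} it is not maximal, yielding an infinite $c_\xi \subseteq d_\xi$ almost disjoint from all of $X_\xi$ with $c_\xi \subseteq^* c_\eta$ for every $\eta < \xi$. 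The $c_\xi$ descend mod finite and are infinite, so they generate a proper non-principal filter $\mathcal{F}$, and I set $\mathbb{Q} = \BbM(\mathcal{F})$. Conditions sharing a stem are compatible, whence $\mathbb{Q}$ is $\sigma$-centered and the generic real $c$ satisfies $c \subseteq^* c_\xi$ for every $\xi$; therefore $c$ is infinite and almost disjoint from every $X_\xi$, i.e. from all of $X$.

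The absoluteness bookkeeping then proceeds exactly as in Lemma~\ref{lem:destroysigma12tower}. The statement ``$c_\xi$ is almost disjoint from every member of $X_\xi$'' is $\Pi^1_1$ in the analytic parameter defining $X_\xi$, hence absolute between transitive models; combined with the absolute fact $c \subseteq^* c_\xi$ this shows that $c$ is almost disjoint from all of $X$ already in $V^{\mathbb{Q}}$, which has the same $\omega_1$. The global assertion ``$c$ is almost disjoint from every member of $X$'' is $\Pi^1_2$, so by Shoenfield absoluteness it transfers to any inner model $V' \supseteq V^{\mathbb{Q}}$ with the same ordinals, even after $\omega_1$ has been collapsed; this is the single place where the full strength of Shoenfield is invoked, precisely as highlighted in the Remark following Theorem~\ref{thm:omega1}. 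Since $c$ is infinite and almost disjoint from $X^{V'}$, it witnesses non-maximality there.

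Granting this lemma, the theorem follows as in Theorem~\ref{thm:omega1}. Suppose $X$ is a $\Sigma^1_2$ mad family and $\omega_1^L < \omega_1$. Applying the lemma inside $L$ produces a ccc poset $\mathbb{Q} \in L$ of size at most $\mathfrak{c}^L = \omega_1^L$; its maximal antichains are countable and there are at most $\omega_1^L$ of them in $L$, a set that is countable in $V$ because $\omega_1^L < \omega_1$. Hence a $\mathbb{Q}$-generic $g$ over $L$ exists in $V$, and $L[g] \subseteq V$ is an inner model. Applying the lemma with ground model $L$ and $V' = V$ shows that $X$ is not maximal in $V$, contradicting that $X$ is mad; the $\Sigma^1_2(x)$ case is identical with $L$ replaced by $L[x]$. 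I expect the main obstacle to be the combinatorial recursion: one must funnel all $\omega_1$ analytic pieces through a single $\subseteq^*$-descending sequence $\langle c_\xi \rangle$ so that one Mathias real defeats every piece simultaneously, each step invoking non-maximality of an analytic almost disjoint family after restriction to the pseudointersection $d_\xi$. Properness of the generated filter is then automatic from the descending construction, and the only remaining care is the absoluteness classification—checking that the surviving statement is genuinely $\Pi^1_2$ so that Shoenfield applies between $L[g]$ and $V$ in spite of the mismatch between $\omega_1^L$ and $\omega_1$.
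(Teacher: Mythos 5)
Your high-level architecture is the right one, and it is essentially the template the paper has in mind (note that the paper does not prove this theorem itself; it only cites T\"ornquist and remarks that the method of Lemma~\ref{lem:destroysigma12tower} ``can be applied''): decompose $X$ in $L$ into $\omega_1^L$ analytic pieces, build a single $\subseteq^*$-descending sequence defeating each piece, force with $\BbM(\mathcal{F})$ over the generated filter, and transfer the $\Pi^1_2$ witness by Shoenfield. The endgame (counting maximal antichains of the $\sigma$-centered poset in $L$, finding a generic in $V$, and the absoluteness bookkeeping) is all fine. The problem is the recursive construction of $\langle c_\xi : \xi < \omega_1 \rangle$, which is exactly where the mad family case is genuinely harder than the ilt case: for an ilt the descending structure comes for free from linearity, whereas here you must manufacture it, and your step does not go through as stated.

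Concretely, let $\mathcal{I}(X)$ denote the ideal generated by $X$ together with the finite sets. At stage $0$, Mathias' theorem applied to the trace of $X_0$ gives \emph{some} infinite $c_0$ almost disjoint from $X_0$, but nothing prevents $c_0$ from being contained in a single $b \in X \setminus X_0$ --- indeed any infinite subset of such a $b$ is a legitimate output of your step, and since $X$ is mad, $c_0$ must at least meet some such $b$ infinitely. If $c_0 \subseteq b$ and $b \in X_\xi$, then at stage $\xi$ every pseudointersection $d_\xi$ of the earlier $c_\eta$'s satisfies $d_\xi \subseteq^* b$, so \emph{every} infinite subset of $d_\xi$ meets $b \in X_\xi$ infinitely and the required $c_\xi$ does not exist; the recursion dies. (Relatedly, your appeal to Mathias silently assumes the trace on $d_\xi$ is an infinite a.d.\ family; a finite trace can already almost cover $d_\xi$.) To repair this you must maintain the invariant $c_\xi \in \mathcal{I}(X)^+$, i.e.\ that $c_\xi$ is not almost covered by finitely many members of $X$, and this requires two lemmas you do not supply: (a) a countable $\subseteq^*$-descending sequence in $\mathcal{I}(X)^+$ has a pseudointersection in $\mathcal{I}(X)^+$ (true, by a case analysis on whether cofinally many members of $X$ meet the $c_\eta$'s infinitely); and, much more seriously, (b) given $d \in \mathcal{I}(X)^+$ and an analytic $B \subseteq X$, there is $c \subseteq^* d$ with $c \in \mathcal{I}(X)^+$ and $c$ almost disjoint from every member of $B$. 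Statement (b) is \emph{not} a corollary of ``analytic a.d.\ families are not maximal'': Mathias gives an infinite $c \subseteq d$ avoiding $B$, but gives no control ensuring $c$ stays $\mathcal{I}(X)$-positive (e.g.\ that $c$ meets infinitely many members of $X \setminus B$ infinitely while still avoiding the whole analytic family $B$ mod finite). Proving (b) is the actual content of T\"ornquist's argument in \cite{AT1} and needs a genuine strengthening of Mathias' theorem via the tree representation of $B$; as written, your proof has a hole precisely at the step you yourself flagged as the main obstacle.
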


The argument for maximal independent families is a bit different. Let us recall the definition of a maximal independent family. 

\begin{definition}
 A set $X \subseteq [\omega]^\omega$ is called independent if for any $F \in [X]^{<\omega}$ and $G \in [X]^{<\omega}$ where $F \cap G = \emptyset$, $\bigcap_{x \in F} x \cap \bigcap_{y \in G} (\omega \setminus y)$ is infinite. An independent family is called maximal if it is maximal under inclusion.
\end{definition}

The set $\bigcap_{x \in F} x \cap \bigcap_{y \in G} (\omega \setminus y)$ is often denoted $\sigma(F,G)$. We will also use this notation below. Note that an independent family $X$ is not maximal iff there is a real $x$ so that $x \cap \sigma(F,G)$ and $(\omega \setminus x) \cap \sigma(F,G)$ are infinite for all $F,G \in [X]^{<\omega}$ where $F \cap G = \emptyset$. Such a real will be called independent over $X$. 

We obtain the following result.
\begin{theorem}
\label{thm:independent}
If there is a $\Sigma^1_2$ maximal independent family, then $\omega_1 = \omega_1^{L}$. More generally, the existence of a $\Sigma^1_2(x)$ maximal independent family implies $\omega_1 = \omega_1^{L[x]}$.
\end{theorem}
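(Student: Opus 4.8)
The plan is to imitate the two-step argument used for towers: first prove an analogue of Lemma~\ref{lem:destroysigma12tower} producing a ccc forcing that destroys maximality of a $\mathbf{\Sigma}^1_2$ independent family in a way that survives into any further transitive extension, and then run the argument of Theorem~\ref{thm:omega1} verbatim. Concretely, the destruction lemma to aim for reads: if $X$ is a $\mathbf{\Sigma}^1_2$ independent family, then there is a ccc forcing $\Q$ consisting of reals such that in any transitive model $V' \supseteq V^\Q$ with the same ordinals, the reinterpretation of $X$ is no longer maximal, i.e. there is a real independent over it. Granting this, Theorem~\ref{thm:independent} follows exactly as Theorem~\ref{thm:omega1}: assuming $\omega_1^{L[x]} < \omega_1$ and a $\Sigma^1_2(x)$ maximal independent family $X$, apply the lemma to the definition of $X$ inside $L[x]$ to obtain $\Q \in L[x]$; since $\vert \mathcal{P}(\omega) \cap L[x] \vert = \omega$ in $V$, a $\Q$-generic $c$ over $L[x]$ exists in $V$, and then $L[x][c] \subseteq V$ has the same ordinals, so the lemma yields a real independent over $X$ in $V$, contradicting maximality.

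For the lemma I would first write $X = \bigcup_{\xi < \omega_1} X_\xi$ as an increasing union of analytic sets, exactly as in the proof of Lemma~\ref{lem:destroysigma12tower}, noting that this representation is reinterpreted correctly in any extension with the same $\omega_1$. If $X$ is already non-maximal, the trivial forcing works. If $X$ is maximal, the object to add is a real $c$ splitting every cell $\sigma(F,G)$ (for disjoint finite $F,G \subseteq X$), which is precisely a real independent over $X$; the natural forcing has conditions $(s,\mathcal{A})$ with $s \in 2^{<\omega}$ and $\mathcal{A} \in [X]^{<\omega}$, is $\sigma$-centered (conditions with a fixed stem $s$ have the common extension $(s,\mathcal{A}\cup\mathcal{B})$) and hence ccc, and the density requirements force the generic to meet each cell in both $0$'s and $1$'s infinitely often. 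The useful reduction is that, the $X_\xi$ being increasing and every $F \cup G$ being finite, ``$c$ is independent over $X$'' is equivalent to ``$c$ is independent over $X_\xi$ for every $\xi$'', and for a fixed analytic piece the statement ``$c$ is independent over $X_\xi$'' is $\mathbf{\Pi}^1_1$ (membership in $X_\xi$ is $\mathbf{\Sigma}^1_1$ and the splitting clause is arithmetic), hence absolute between transitive models.

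The step I expect to be the main obstacle is exactly the preservation clause, i.e. showing that the added real stays independent over the reinterpreted family in an extension $V'$ whose $\omega_1$ is strictly larger than that of $V^\Q$ (this is the configuration actually used in the deduction, where $V^\Q = L[x][c]$ and $V' = V$). The raw statement ``$c$ is independent over $X$'' is only $\mathbf{\Pi}^1_2$, so it is not automatically upward absolute, and this is where the independent case genuinely differs from the tower case: an independent family may contain a perfect set, so it is \emph{not} thin, and one cannot appeal to the Mansfield--Solovay containment (Theorem~\ref{thm:towersubsetofL}) to argue that the family does not grow. My plan is therefore to push the preservation entirely through the $\mathbf{\Pi}^1_1$ absoluteness of piecewise independence, using as absolute witnesses the fact that any element of $X \setminus X_\xi$ is independent over the piece $X_\xi$ (a $\mathbf{\Pi}^1_1$ statement), and arranging the forcing so that the generic's independence is certified against these witnesses rather than against the unbounded reinterpretation directly. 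The delicate point, which I would scrutinize most carefully, is reconciling the pieces indexed by ordinals that are countable in $V'$ but uncountable in $V^\Q$; securing that every new cell of $X^{V'}$ is nonetheless split by $c$ is the crux on which the whole argument turns.
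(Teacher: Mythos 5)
Your high-level plan (pass to $L[x]$, decompose $X$ into $\omega_1^{L[x]}$ many analytic pieces, add a generic real independent over the family, and transfer the non-maximality up to $V$) is the paper's plan, but the step you single out as ``the crux on which the whole argument turns'' is left unresolved, and the reason you believe it is an obstacle rests on a mistake about absoluteness. The statement ``$c$ is independent over $X$'', with $X$ given by its $\Sigma^1_2(x)$ definition, is $\Pi^1_2(x,c)$, and by Shoenfield's theorem $\Pi^1_2$ statements with real parameters \emph{are} absolute --- in particular upward absolute --- between inner models containing those parameters; here the relevant pair is $L[x][c] \subseteq V$, and $L[x][c]$ is an inner model of $V$. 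This is exactly how the paper closes the argument, and also how Lemma~\ref{lem:destroysigma12tower} handles the tower case (``$x$ is a pseudointersection of $X$'' is likewise $\mathbf{\Pi}^1_2$ and ``will hold true in any extension''). So there is no need to certify the generic against the pieces $X_\xi$ for $\xi \geq \omega_1^{L[x]}$ individually: once $c$ is independent over $X$ as computed in $L[x][c]$ --- where $X$ really is the union of the first $\omega_1^{L[x]}$ pieces, since $\omega_1^{L[x][c]} = \omega_1^{L[x]}$ --- Shoenfield transports the full $\Pi^1_2$ assertion to $V$, where it speaks about all of $X^V$, new pieces included. Your observation that Mansfield--Solovay is unavailable because independent families need not be thin is correct but irrelevant; that containment is never used in this case.

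There is also a genuine gap in the forcing step itself. Your poset with conditions $(s,\mathcal{A})$, $\mathcal{A} \in [X]^{<\omega}$, only guarantees by density that the generic splits the cells $\sigma(F,G)$ built from \emph{ground-model} elements of $X$; but the analytic pieces $X_\xi$, reinterpreted in the extension, may acquire new elements, and what you need is that $c$ is independent over the reinterpreted $X_\xi$ (this is what ``$c$ is independent over $X$'' means when evaluated in $L[x][c]$). The paper instead uses a Cohen real together with Miller's argument (Lemma~\ref{lem:independent}): the analytic sets $H$ and $K$ of reals failing to split some cell of the analytic family are shown to be meager via a $\Delta$-system argument on a perfect almost disjoint subfamily of a comeager set; covering $H \cup K$ by a meager $F_\sigma$ set coded in the ground model makes ``$c \notin H \cup K$'' absolute and hence applicable to the reinterpretation. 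If you wish to keep your $\sigma$-centered poset you would have to prove an analogous preservation statement for its generic; as written, that step is missing. With Cohen forcing and Lemma~\ref{lem:independent} in place, your deduction of the theorem from the destruction lemma, imitating Theorem~\ref{thm:omega1}, is correct.
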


In \cite{AM1} Miller basically proved that a Cohen real is independent over any ground model coded analytic independent family. He did not put his theorem in these words, so before we go on let us repeat his argument. 

\begin{lemma}[{\cite[Proof of Theorem 10.28]{AM1}}]
\label{lem:independent}
 Let $\varphi(x)$ be a $\mathbf{\Sigma}^1_1$ formula defining an independent family and let $c$ be a Cohen real over $V$. Then in $V[c]$, $c$ is independent over the family defined by $\varphi(x)$.  
\end{lemma}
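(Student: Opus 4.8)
The plan is to show that a Cohen real $c$, viewed as an element of $2^\omega \cong [\omega]^\omega$, is independent over the family $A = \{x : \varphi(x)\}$ as computed in $V[c]$. Since $\varphi$ is $\mathbf{\Sigma}^1_1$, by the Mansfield--Solovay analysis we may write $A = p[T]$ for a tree $T$ on $2 \times \omega$ coded in $V$, and independence of $A$ is an absolute ($\mathbf{\Pi}^1_1$, hence $\mathbf{\Sigma}^1_2$-level) property that holds in $V[c]$ by Shoenfield absoluteness. What must be verified is that for every finite disjoint $F, G \subseteq A^{V[c]}$, both $c \cap \sigma(F,G)$ and $(\omega \setminus c) \cap \sigma(F,G)$ are infinite, where $\sigma(F,G) = \bigcap_{x \in F} x \cap \bigcap_{y \in G}(\omega \setminus y)$.

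First I would fix names $\dot x_1, \dots, \dot x_k$ for a purported finite set of members of $A$, together with their witnesses: for each such name, a condition forcing $(\dot x_j, \dot w_j) \in [T]$. I would work below a condition $p$ (a finite partial function $\omega \to 2$) that decides the partition of the $\dot x_j$ into the positive part $F$ and the negative part $G$. The key combinatorial fact, which is exactly where the independence of the ground-model-coded family $A$ is used, is that $\sigma(F,G)$ is forced to be infinite: this follows because $T$ is in $V$ and, by absoluteness of the defining relation, any branch data through $T$ realizes sets whose $\sigma(F,G)$ is infinite (independence being a property already decided by finitely much branch information extended generically).

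The crucial density argument is then this: given $p$ and any $n \in \omega$, I want to find $q \le p$ and some $m \ge n$ with $m \in \sigma(F,G)$ (as decided by $q$) and $q(m) = 1$, and symmetrically some $m'$ with $q(m') = 0$. Because $\sigma(F,G)$ is forced infinite, I can extend the branch segments determined by $q$ far enough to pin down a value $m \ge n$ lying in $\sigma(F,G)$; since $c$ is the Cohen generic and the coordinate $c(m)$ has not yet been decided by the finite condition, I am free to set $c(m) = 1$ (or $0$) in the extension. Hence the sets of conditions forcing ``$c$ meets $\sigma(F,G)$ above $n$'' and ``$\omega \setminus c$ meets $\sigma(F,G)$ above $n$'' are each dense, so by genericity both $c \cap \sigma(F,G)$ and $(\omega \setminus c) \cap \sigma(F,G)$ are infinite in $V[c]$.

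The main obstacle I anticipate is the interaction between the finitely many witness names and the generic: one must be careful that extending $q$ to force a new element of $\sigma(F,G)$ does not inadvertently commit the generic coordinate we wish to control, which is why I separate the branch-extension step (acting on the tree side, guided by $T \in V$) from the decision about $c(m)$ (a fresh coordinate of the Cohen condition). A secondary subtlety is ensuring that the ``independent family'' hypothesis is genuinely invoked at the level of $V$: the infinitude of each $\sigma(F,G)$ for finite subsets drawn from $p[T]$ is what prevents $q$ from being forced to avoid $\sigma(F,G)$ entirely above $n$, and this is precisely the content of $\varphi$ defining an independent family, transported to $V[c]$ by Shoenfield absoluteness.
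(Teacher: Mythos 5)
There is a genuine gap at the central density step of your argument, and the paper's proof takes a different route precisely to avoid it. You want the set of conditions forcing ``$c$ meets $\sigma(F,G)$ above $n$'' to be dense, and you propose to achieve this by first extending ``on the tree side'' to pin down some $m\geq n$ lying in $\sigma(F,G)$ and only then freely setting $c(m)=1$ or $0$. But $F$ and $G$ are finite subsets of $A^{V[c]}$, so their elements are given by arbitrary Cohen names, and these names may depend on exactly the coordinates of $c$ you wish to leave undecided; there is no separate ``tree side,'' since the branches through $T$ witnessing $\dot x_j\in p[T]$ are themselves names evaluated by the same generic. Concretely, if $\dot y$ is the name for $\omega\setminus \dot c$, then a condition forces $m\in\dot y$ only by deciding $c(m)=0$, and $c\cap \dot y=\emptyset$; your density argument applied to $F=\{\dot y\}$, $G=\emptyset$ would ``prove'' that $c$ meets $\dot y$ infinitely often. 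The lemma survives only because such diagonalizing reals are provably \emph{not} members of $A^{V[c]}$, and establishing that is precisely the missing content: the infinitude of each $\sigma(F,G)$ (which you correctly get from absoluteness of independence) does not prevent $\sigma(F,G)$ from being entirely contained in $\omega\setminus c$ when $F,G$ are chosen in $V[c]$.

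The paper supplies the missing ingredient by a category argument. It defines the analytic sets $H$ and $K$ of reals that almost contain, respectively almost avoid, some $\sigma(F,G)$ with $F,G$ finite disjoint subsets of the (reinterpreted) family, and shows both are meager: if, say, $H$ were nonmeager it would be comeager on a basic open set, hence contain a perfect almost disjoint family $P$; applying the Delta-system lemma to the finite supports $F_x\cup G_x$ for $x\in P$ and refining, one finds $x\neq y$ with $(F_x\cup F_y)\cap(G_x\cup G_y)=\emptyset$ and $\sigma(F_x\cup F_y,G_x\cup G_y)\subseteq^* x\cap y=^*\emptyset$, contradicting independence. Then $H\cup K$ is covered by a meager $F_\sigma$ set $C$ coded in $V$, the inclusion $H\cup K\subseteq C$ is absolute, and the Cohen real avoids $C$. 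That Delta-system/perfect a.d.\ family computation is the combinatorial heart of the proof; your proposal replaces it with a density claim that fails without it.
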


\begin{proof}
Let $X$ denote the set $\{x \in [\omega]^\omega : \varphi(x) \}$ in any model extending $V$. Note that in any model $X$ is an independent family by Schoenfield absolutness. 
 Let $$K = \{x \in [\omega]^\omega : \exists F \in [X]^{<\omega} \exists G \in [X]^{<\omega} (F \cap G = \emptyset \wedge \vert \sigma(F,G) \cap x \vert< \omega)  \} $$
 
 and 
 
 $$H = \{x \in [\omega]^\omega : \exists F \in [X]^{<\omega} \exists G \in [X]^{<\omega} (F \cap G = \emptyset \wedge \vert \sigma(F,G) \cap (\omega \setminus x) \vert< \omega)  \} .$$
 
 These sets are both analytic. Note that $x$ is independent over $X$ iff $x \notin H \cup K$. To show that any Cohen real $c$ is independent over $X$, i.e. $c \notin H \cup K$ it suffices to prove that $H$ and $K$ are meager. Why? When $H \cup K$ is meager then there is a meager $F_\sigma$ set $C$ so that $H \cup K \subseteq C$ and this statement is absolute ($\forall x (x \in H \cup K \rightarrow x \in C)$). As $c$ is Cohen, $V[c] \models c \notin C$ and thus $V[c] \models c \notin H \cup K$ which implies that in $V[c]$, $c$ is independent over $X$. 
 
 So let us prove: 
 \begin{clm}
  $K$ and $H$ are meager. 
 \end{clm}

 \begin{proof}
 Suppose e.g. that $H$ is nonmeager. The argument for $K$ will follow similarly. Because $H$ is analytic it has the Baire property and is thus comeager somewhere. It is well known and easy to see that any comeager set contains a perfect set of almost disjoint reals. So let $P \subseteq H$ be a perfect almost disjoint family. For each $x \in P$ we have $F_x$ and $G_x$ so that $\sigma(F_x,G_x) \subseteq^* x$. By the Delta system lemma, there is a set $S \in [P]^{\omega_1}$ and $R \in [P]^{<\omega}$ so that $$\forall x \neq y \in S ((F_x \cup G_x) \cap (F_y \cup G_y) = R).$$ For any $x\in S$ we define $R_x^0 = R \cap F_x$ and $R_x^1 = R \cap G_x$. As $S$ is uncountable there is an uncountable $S' \subseteq S$ so that $$\forall x,y \in S' (R_x^0 = R_y^0 \wedge R_x^1 = R_y^1).$$
 
 But now note that for any $x \neq y \in S'$, $F_x \cap G_y = (R \cap F_x) \cap (R \cap G_y) = R_x^0 \cap R_y^1 = R_x^0 \cap R_x^1 = \emptyset$. By symmetry we also have that $F_y \cap G_x = \emptyset$ and this implies that $$(F_x \cup F_y) \cap (G_x \cup G_y) = \emptyset.$$ In particular we can form $\sigma(F_x \cup F_y, G_x \cup G_y)$. By choice of $F_x,G_x,F_y,G_y$ we have that $$\sigma(F_x \cup F_y, G_x \cup G_y) \subseteq^* x \cap y =^* \emptyset$$ as $P$ was an almost disjoint family. But this contradicts the independence of $X$.  
 \end{proof}
 
\end{proof}

\begin{proof}[Proof of Theorem~\ref{thm:independent}]
Assume $X$ is a $\Sigma^1_2$ maximal independent family. Then in $L$, $X$ is also independent and it can be written as a union $\bigcup_{\xi < \omega_1^L} X_\xi$ of analytic sets $X_\xi$. As $\omega_1^L < \omega_1$, there is a Cohen real $c$ over $L$. We have that $\omega_1^{L[c]} = \omega_1^L$ and in $L[c]$, $X$ still corresponds to the union $\bigcup_{\xi < \omega_1^L} X_\xi$. By the above lemma $c$ is independent over all the $X_\xi$ so in particular $c$ is independent over $X$. This statement is $\Pi^1_2$ and thus absolute between any inner models containing $c$. In particular in $V$, $X$ is not maximal. 
\end{proof}

\begin{theorem}
\label{thm:hamel}
If there is a $\Sigma^1_2$ Hamel basis of $\mathbb{R}$, then $\omega_1 = \omega_1^{L}$. More generally, the existence of a $\Sigma^1_2(x)$ Hamel basis of $\mathbb{R}$ implies $\omega_1 = \omega_1^{L[x]}$.
\end{theorem}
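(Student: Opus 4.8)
The plan is to follow the template of Theorem~\ref{thm:independent} and its supporting Lemma~\ref{lem:independent}, replacing the notion ``independent over $X$'' by ``not in the $\mathbb{Q}$-span of $X$''. The central lemma I would isolate is the Hamel analogue of Lemma~\ref{lem:independent}: if $Y \subseteq \mathbb{R}$ is $\mathbf{\Sigma}^1_1$ and linearly independent over $\mathbb{Q}$, then $\spn_{\mathbb{Q}}(Y)$ is meager; consequently a Cohen real $c$ over any model containing a code for $Y$ satisfies $c \notin \spn_{\mathbb{Q}}(Y)$ in the extension, since $c$ avoids the $F_\sigma$ meager cover $C$ of $\spn_{\mathbb{Q}}(Y)$ and ``$\spn_{\mathbb{Q}}(Y) \subseteq C$'' is a $\Pi^1_1$ statement, hence absolute.

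To prove the lemma I would first observe that $W := \spn_{\mathbb{Q}}(Y)$ is analytic (a countable union, over $n$ and over rational tuples, of continuous images of $Y^n$) and is a $\mathbb{Q}$-subspace of $\mathbb{R}$, so it has the Baire property. By the standard $0$--$1$ law for additive subgroups with the Baire property (via Pettis/Steinhaus: a non-meager subgroup contains an interval around $0$ and is therefore all of $\mathbb{R}$), $W$ is either meager or equal to $\mathbb{R}$. The work is in excluding $W = \mathbb{R}$, i.e.\ in showing that no analytic set is a Hamel basis. Here I would fix $h_0 \in Y$, set $V_0 = \spn_{\mathbb{Q}}(Y \setminus \{h_0\})$, and note that $V_0$ is again an analytic proper $\mathbb{Q}$-subspace (it omits $h_0$ by linear independence), hence meager by the same $0$--$1$ law; but if $Y$ spanned $\mathbb{R}$ then $\mathbb{R} = \bigcup_{q \in \mathbb{Q}}(V_0 + q h_0)$ would be a countable union of meager sets, a contradiction. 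Thus $W$ is meager.

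With the lemma in hand the main argument mirrors the proof of Theorem~\ref{thm:independent}. Assuming a $\Sigma^1_2$ Hamel basis $X$ and $\omega_1^L < \omega_1$, I would note that ``the set defined by the given formula is linearly independent'' is $\Pi^1_2$ (the matrix ``$x_i \in X$'' is $\Sigma^1_2$ and occurs negatively), true in $V$, hence true in $L$ by Shoenfield absoluteness, so $X^L$ is linearly independent in $L$. As in Lemma~\ref{lem:destroysigma12tower} I would write $X^L = \bigcup_{\xi < \omega_1^L} X_\xi$ with each $X_\xi$ analytic. Since $\omega_1^L < \omega_1$ there is a Cohen real $c$ over $L$ with $c \in V$, and since Cohen forcing is ccc we have $\omega_1^{L[c]} = \omega_1^L$, so in $L[c]$ the set $X$ is still $\bigcup_{\xi<\omega_1^L} X_\xi$. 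For each finite $F \subseteq \omega_1^L$ the set $Y_F = \bigcup_{\xi \in F} X_\xi$ is analytic and linearly independent, so by the lemma $c \notin \spn_{\mathbb{Q}}(Y_F)$; as every element of $\spn_{\mathbb{Q}}(X)$ lies in some $\spn_{\mathbb{Q}}(Y_F)$, we conclude $c \notin \spn_{\mathbb{Q}}(X)$ in $L[c]$. Finally ``$c \notin \spn_{\mathbb{Q}}(X)$'' is again $\Pi^1_2$, so it is absolute between $L[c]$ and $V$; thus in $V$ the real $c$ lies outside the span of $X$, contradicting that $X$ is a Hamel basis. The relativized statement is obtained by replacing $L$ with $L[x]$ throughout.

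The main obstacle is the lemma, and specifically the step ruling out $W = \mathbb{R}$: unlike the independent-family case of Lemma~\ref{lem:independent}, where a non-meager analytic witness directly yields a Delta-system contradiction, here ``non-meager span'' collapses to ``span $= \mathbb{R}$'' by the subgroup $0$--$1$ law, so the genuine content is the classical fact that there is no analytic Hamel basis; the hyperplane argument above (fixing $h_0$ and covering $\mathbb{R}$ by countably many translates of a meager subspace) is the device I would use to supply it.
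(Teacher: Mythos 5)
Your proposal is correct, and its outer skeleton coincides with the paper's: decompose the $\Sigma^1_2$ set into $\omega_1^{L}$ analytic pieces in $L$, use $\omega_1^L<\omega_1$ to find a Cohen real $c\in V$ over $L$, show $c$ is linearly independent over each (finite union of the) analytic pieces, and transfer ``$c\notin\spn_{\mathbb{Q}}(X)$'' to $V$ by $\Pi^1_2$ absoluteness. Where you genuinely diverge is in the key lemma. The paper's Lemma~\ref{lem:hamel} (following Miller) argues directly with the forcing: if some basic open $U$ forced $\dot c$ into the span, then $c$ and the perturbed Cohen real $c+sx$ (for $x\in A\cap V$ fixed and $s$ a small rational) would both admit representations over $A$, and subtracting them produces a nontrivial $\mathbb{Q}$-linear dependence in $A$ in $V[c]$, contradicting absoluteness of linear independence. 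You instead prove the purely descriptive-set-theoretic statement that $\spn_{\mathbb{Q}}(Y)$ is meager for any analytic linearly independent $Y$ --- via the Pettis zero--one law for subgroups with the Baire property plus Sierpi\'nski's hyperplane-covering trick to rule out $\spn_{\mathbb{Q}}(Y)=\mathbb{R}$ --- and then invoke the standard fact that a Cohen real avoids every ground-model-coded meager set, with absoluteness of the $\Pi^1_1$ inclusion $\spn_{\mathbb{Q}}(Y)\subseteq C$. Your route has two advantages: it runs structurally parallel to Lemma~\ref{lem:independent} (which likewise reduces to meagerness of the ``bad'' analytic sets), and it isolates a clean classical fact (no analytic set with the Baire property spans $\mathbb{R}$ independently in a non-meager way) that sidesteps the slightly delicate cancellation issue in the perturbation argument. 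You are also more careful than the paper at one point: passing from ``$c$ is independent over each $X_\xi$'' to ``$c\notin\spn_{\mathbb{Q}}(X)$'' really does require considering the finite unions $Y_F$, which you make explicit and the paper elides.
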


A Hamel basis of $\mathbb{R}$ is a maximal set of linearly independent reals over the rationals $\mathbb{Q}$. Again it was Miller who first showed that a Cohen real in $\mathbb{R}$ is independent over any ground model coded analytic linearly independent family.   

\begin{lemma}[{\cite[Proof of Theorem 9.25]{AM1}}]
\label{lem:hamel}
 Assume $A\subseteq \mathbb{R}$ is an analytic set of reals that are linearly independent over the field of rationals. Assume $c \in \mathbb{R}$ is a Cohen real over $V$. Then in $V[c]$, $c$ is linearly independent over (the reinterpretation of) $A$.     
\end{lemma}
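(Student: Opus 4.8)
The plan is to run the same scheme as in the proof of Lemma~\ref{lem:independent}, replacing its combinatorial core by a linear–algebraic one. Write $D=\operatorname{span}_{\mathbb{Q}}(A)$ for the $\mathbb{Q}$-linear span of $A$, computed in whatever model we are working in. Since $A$ is linearly independent over $\mathbb{Q}$, a real $x$ is linearly independent over $A$ exactly when $x\notin D$; moreover the assertion ``$A$ is linearly independent over $\mathbb{Q}$'' is $\mathbf{\Pi}^1_1$, so by Mostowski absoluteness $A$ stays linearly independent in $V[c]$. Thus it suffices to produce, working in $V$, a meager $F_\sigma$ set $C\supseteq D$. The inclusion $\forall x\,(x\in D\to x\in C)$ is $\mathbf{\Pi}^1_1$ (as $D$ is analytic and $C$ is Borel), hence absolute to $V[c]$, so that $D^{V[c]}\subseteq C$; as $c$ is Cohen it avoids the ground-model coded meager set $C$, whence $c\notin D^{V[c]}$, i.e.\ $c$ is linearly independent over $A$ in $V[c]$. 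This is verbatim the absoluteness bookkeeping of Lemma~\ref{lem:independent}, so I would keep it brief and concentrate on the new ingredient.

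First I would record that $D$ is analytic and has the Baire property: it is the countable union, over $n\in\omega$ and over rational tuples $(q_1,\dots,q_n)\in(\mathbb{Q}\setminus\{0\})^n$, of the continuous images of the analytic sets $\{(a_1,\dots,a_n)\in A^n : a_i \text{ pairwise distinct}\}$ under the map $(a_1,\dots,a_n)\mapsto\sum_i q_i a_i$. Clearly $D$ is a $\mathbb{Q}$-subspace of $\mathbb{R}$.

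The heart of the argument, and the step I expect to be the main obstacle, is to show that $D$ is meager. I would isolate the dichotomy: \emph{a $\mathbb{Q}$-subspace $V\subseteq\mathbb{R}$ with the Baire property is either meager or equal to $\mathbb{R}$}. Indeed, if $V$ is nonmeager then by the Pettis–Piccard theorem (see e.g.\ \cite{AK}) the difference set $V-V$ contains an open neighbourhood of $0$; but $V-V=V$ since $V$ is a subgroup, and a $\mathbb{Q}$-subspace containing an interval $(-\delta,\delta)$ must be all of $\mathbb{R}$, because any $r$ equals $N\cdot(r/N)$ for a large integer $N$ with $r/N\in(-\delta,\delta)$. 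Applying this to $D$, it remains to rule out $D=\mathbb{R}$. If $D=\mathbb{R}$ then, being linearly independent, $A$ is a Hamel basis; fixing $h_0\in A$ and setting $V_0=\operatorname{span}_{\mathbb{Q}}(A\setminus\{h_0\})$, the set $V_0$ is again analytic and is a \emph{proper} $\mathbb{Q}$-subspace (properness because $h_0\notin V_0$ by independence), hence meager by the dichotomy. But then $\mathbb{R}=\bigcup_{q\in\mathbb{Q}}(qh_0+V_0)$ is a countable union of meager sets, contradicting the Baire category theorem. Therefore $D\neq\mathbb{R}$ and $D$ is meager, which supplies the required $C$ and completes the proof.
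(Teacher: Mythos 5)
Your argument is correct, but it takes a genuinely different route from the paper's. The paper proves the lemma by a direct forcing computation that exploits translation invariance of Cohen forcing: assuming some basic open $U$ forces $\dot c$ into the rational span of $A$, one fixes $x\in A\cap V$ and a small rational $s$ with $c+sx\in U$, observes that $c+sx$ is again Cohen over $V$ (being a ground-model affine image of $c$), and subtracts the two resulting representations of $c$ and $c+sx$ to manufacture a rational dependence among elements of $A$ in $V[c]$, contradicting the absoluteness of the ($\mathbf{\Pi}^1_1$) statement that $A$ is linearly independent. You instead establish the purely ground-model fact that $D=\operatorname{span}_{\mathbb{Q}}(A)$ is meager --- via the Pettis-theorem dichotomy that a $\mathbb{Q}$-subspace of $\mathbb{R}$ with the Baire property is either meager or all of $\mathbb{R}$, together with the classical Sierpi\'nski-style Baire-category argument excluding $D=\mathbb{R}$ (no analytic Hamel basis) --- and then run exactly the absoluteness bookkeeping of Lemma~\ref{lem:independent}. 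Your version is structurally parallel to the paper's proof of Lemma~\ref{lem:independent} (meagerness of the bad set plus an absolute $\mathbf{\Pi}^1_1$ covering statement), and it isolates a reusable ground-model fact: \emph{any} proper analytic $\mathbb{Q}$-subspace of $\mathbb{R}$ is meager, so any real avoiding all ground-model-coded meager sets is independent over $A$. The paper's argument is shorter and needs no Pettis machinery, but it is tailored to Cohen reals, relying on the closure of the class of Cohen reals under translation by ground-model reals. Both versions serve equally well in the later MA($\omega_1$) application, where one only has a real in $V$ that is Cohen over an elementary submodel.
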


\begin{proof}
 We assume that $A \neq \emptyset$, else the argument is trivial. Let $x \in A \cap V$ be arbitrary. Suppose that $U \forces$ ``$\dot c$ is not independent over $A$'' where $U \subseteq \mathbb{R}$ is some basic open set. Say $$U \forces \exists x_0,\dots,x_n \in A \exists q_0,\dots q_n \in \mathbb{Q} (\dot c = q_0x_0 + \dots + q_nx_n)$$ for some $n \in \omega$. 
 Now let $c \in U$ be Cohen over $V$ and $x_0,\dots,x_n, \in A, q_0,\dots q_n \in \mathbb{Q}$ so that $$c = q_0x_0 + \dots + q_nx_n.$$ Let $s$ be a small enough rational number so that $c + s x \in U$. Remember that, as $x \in V$, $c + s x$ is also a Cohen real over $V$. Thus let $y_0, \dots, y_n \in A, r_0, \dots, r_n \in \mathbb{Q}$ so that  
 
 $$c + sx = r_0y_0 + \dots + r_ny_n.$$
 
 But now we have that $$r_0y_0 + \dots + r_ny_n - (q_0x_0 + \dots + q_nx_n) = sx$$ and so $A$ is not linearly independent in $V[c]$. But this is impossible by absoluteness. 
\end{proof}

\begin{proof}[Proof of Theorem~\ref{thm:hamel}]
Same as the proof of Theorem~\ref{thm:independent}.
\end{proof}

For ultrafilters the proof is not much different. It will appear in \cite{JS1}. 

\begin{theorem}
\label{thm:ultrafilter}
If there is a $\Sigma^1_2$ ultrafilter, then $\omega_1 = \omega_1^{L}$. More generally, the existence of a $\Sigma^1_2(x)$ ultrafilter implies $\omega_1 = \omega_1^{L[x]}$.
\end{theorem}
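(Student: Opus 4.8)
The plan is to mirror the proof of Theorem~\ref{thm:independent} line by line, replacing the ``Cohen real is independent'' input of Lemma~\ref{lem:independent} by the analogous fact for ultrafilters. Throughout I read \emph{ultrafilter} as non-principal ultrafilter, so that $X$ is a proper filter all of whose finite subfamilies $F$ have infinite intersection $\bigcap F$; call this the finite intersection property (FIP). The new ingredient I need is: if $A \subseteq [\omega]^\omega$ is $\mathbf{\Sigma}^1_1$ with the FIP and $c$ is Cohen over $V$, then in $V[c]$ the real $c$ \emph{splits} $\bigcap F$ for every finite $F \subseteq A$, i.e.\ both $c \cap \bigcap F$ and $(\omega \setminus c) \cap \bigcap F$ are infinite.

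To prove this I would set $H = \{ y : \exists F \in [A]^{<\omega}\ (\bigcap F \subseteq^* y) \}$ and $K = \{ y : \exists F \in [A]^{<\omega}\ (\bigcap F \subseteq^* \omega \setminus y) \}$, both analytic, and show $H \cup K$ is meager exactly as in Lemma~\ref{lem:independent}; a Cohen real then avoids an $F_\sigma$ meager set containing $H \cup K$, and this containment, being $\mathbf{\Pi}^1_2$, persists in $V[c]$, which is precisely the claim. For the meagerness of $H$: were $H$ non-meager it would be comeager on a basic clopen set and hence contain a perfect almost disjoint family $P$; choosing $F_y \in [A]^{<\omega}$ with $\bigcap F_y \subseteq^* y$ for each $y \in P$ and taking $y \neq y'$ in $P$, the set $\bigcap(F_y \cup F_{y'})$ is infinite by the FIP yet is almost contained in $y \cap y' =^* \emptyset$, a contradiction. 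Since $K$ is the image of $H$ under the complementation homeomorphism $y \mapsto \omega \setminus y$, it is meager as well.

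With this in hand the argument copies Theorem~\ref{thm:independent}. Suppose towards a contradiction that $X$ is a $\Sigma^1_2(x)$ ultrafilter while $\omega_1^{L[x]} < \omega_1$. In $L[x]$ write $X = \bigcup_{\xi < \omega_1^{L[x]}} X_\xi$ as a union of analytic sets coded in $L[x]$, as in Lemma~\ref{lem:destroysigma12tower}; since ``$X$ has the FIP'' is $\Pi^1_2(x)$, Schoenfield absoluteness carries it down to $L[x]$, so every $X_\xi \subseteq X$ has the FIP in $L[x]$. As $\omega_1^{L[x]} < \omega_1$ there is a Cohen real $c$ over $L[x]$ lying in $V$, and $\omega_1^{L[x][c]} = \omega_1^{L[x]}$ ensures that $X$ is still $\bigcup_{\xi} X_\xi$ in $L[x][c]$. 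Applying the lemma in $L[x]$ to each analytic piece $X_\xi$, in $L[x][c]$ the real $c$ splits $\bigcap F$ for every finite $F \subseteq X_\xi$; but if $c \in X_\xi$ then $F = \{c\}$ would force $c$ to split itself, which is absurd, so $c \notin X_\xi$, and symmetrically $F = \{\omega \setminus c\}$ gives $\omega \setminus c \notin X_\xi$. Hence $c \notin X$ and $\omega \setminus c \notin X$ in $L[x][c]$. Both are $\Pi^1_2(c,x)$ statements, so by Schoenfield absoluteness between the inner model $L[x][c]$ and $V$ they hold in $V$ as well; but then $X$ decides neither $c$ nor its complement, contradicting that $X$ is an ultrafilter in $V$. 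The relative statement follows by carrying the parameter $x$ throughout.

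The routine parts — the $\omega_1^{L[x]}$-decomposition and the absoluteness bookkeeping — are already done in Lemma~\ref{lem:destroysigma12tower} and Theorem~\ref{thm:independent}, so I expect the only genuine content to lie in the lemma, and within it in the asymmetry between the two ways $c$ can fail to split $\bigcap F$. The case $\bigcap F \subseteq^* y$ is killed immediately by almost disjointness together with the FIP, exactly as in Miller's argument; the opposite case $\bigcap F \subseteq^* \omega \setminus y$, however, is \emph{not} refuted by an almost disjoint family, and the main point is to notice that it reduces to the first under complementation. A secondary subtlety to keep honest is that in $L[x][c]$ the reinterpreted family $X$ acquires new members, possibly including $c$ itself; this is harmless because the lemma is proved for the reinterpreted analytic set and is applied to one fixed piece $X_\xi$ at a time, which is exactly why retaining the FIP on each $X_\xi$ in $L[x]$ matters.
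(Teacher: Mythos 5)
Your proof is correct. The paper itself gives no argument for Theorem~\ref{thm:ultrafilter} (it defers to \cite{JS1}, remarking only that the proof ``is not much different'' from the independent-family case), and your proposal executes exactly the template the authors indicate: the splitting analogue of Lemma~\ref{lem:independent} via meagerness of the two analytic failure sets (with the complementation homeomorphism correctly handling the second one, which is the only genuinely new point), followed by the decomposition into analytic pieces over $L[x]$, a Cohen real, and Shoenfield absoluteness as in Theorem~\ref{thm:independent}.
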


We want to remark the ideas above can also be used to show that under Martin's Axiom none of the families above have $\mathbf{\Sigma}^1_2$ witnesses.  

\begin{theorem}
 MA($\omega_1$) implies that there is no $\mathbf{\Sigma}^1_2$ ilt, mad family, maximal independent family, Hamel basis or ultrafilter. 
\end{theorem}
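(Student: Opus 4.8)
The plan is to adapt the $\Sigma^1_2$-destruction machinery already developed in this section to the setting where $\omega_1^L < \omega_1$ (indeed $\omega_1^{L[x]} < \omega_1$ for the relevant parameter) is forced by MA($\omega_1$). The key observation is that MA($\omega_1$) implies $2^{\aleph_0} > \omega_1$, and in particular that $\omega_1^{L[x]} < \omega_1$ for every real $x$, since under MA($\omega_1$) the set $\mathcal{P}(\omega) \cap L[x]$ has size $\omega_1 < \mathfrak{c}$ and cannot contain all reals. More to the point, MA($\omega_1$) asserts exactly that every ccc poset meets any given family of $\omega_1$ dense sets, which is the hypothesis one needs to run the generic-real arguments from Lemma~\ref{lem:destroysigma12tower} and Lemma~\ref{lem:independent} \emph{inside $V$} rather than in a forcing extension.

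First I would treat each family type by reusing its associated ccc forcing. For an ilt $X$, Lemma~\ref{lem:destroysigma12tower} produces the ccc poset $\mathbb{Q} = \BbM(\mathcal{F})$ where $\mathcal{F}$ is generated by the lower bounds $\{x_\xi : \xi < \omega_1\}$ of the analytic pieces $X_\xi$; the generic adds a pseudointersection. Rather than pass to $V^{\mathbb{Q}}$, I would list the $\omega_1$ many dense sets $D_\xi = \{(s,F) \in \mathbb{Q} : x_\xi \in F\text{'s generated filter or } \max s > \xi\text{-th element}\}$ guaranteeing that the generic diagonalizes all the $x_\xi$, and apply MA($\omega_1$) to obtain a filter meeting all of them; the induced real $x$ is then a genuine pseudointersection of $X$ in $V$ by the same absoluteness bookkeeping as in the lemma (the relation $\forall y \in X_\xi(x \subseteq^* y)$ is analytic and survives, and $x \subseteq^* x_\xi$ for all $\xi$). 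This contradicts inextendibility. For mad families one uses the analogous ccc (finite-approximation) poset adding a real almost disjoint from the family; for maximal independent families, Hamel bases and ultrafilters one replaces Mathias forcing by Cohen forcing, since Lemmas~\ref{lem:independent} and~\ref{lem:hamel} show a Cohen real is independent (resp.\ linearly independent) over each analytic piece, and MA($\omega_1$) again supplies an actual Cohen-generic real over the $\omega_1$ dense sets coding ``avoid the $F_\sigma$ meager set $C_\xi \supseteq H_\xi \cup K_\xi$ attached to $X_\xi$''. In each case the witnessing statement (not a pseudointersection / independent over $X$ / not in the span) is $\mathbf{\Pi}^1_2$ or analytic and hence absolute, so maximality fails in $V$.

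The main obstacle is ensuring that the decomposition $X = \bigcup_{\xi < \omega_1} X_\xi$ into analytic sets, together with the lower-bound or meager-witness data $X_\xi$, genuinely lives in $V$ with only $\omega_1$ many dense sets to meet — in particular that the $X_\xi$ remain analytic and that their reinterpretations are handled uniformly. This is exactly the point where boldface $\mathbf{\Sigma}^1_2$ matters: the decomposition into $\omega_1$ analytic sets (via the continuous $f$ with $X_\xi = \{x : \exists w\, (\|f(x,w)\| = \xi)\}$) is available outright in $V$ for boldface definitions, with no need to descend to $L[x]$ and come back up, so the generic real can be built directly by MA. I would therefore organize the proof as a single lemma extracting ``$X$ a $\mathbf{\Sigma}^1_2$ family of the given type, written as $\bigcup_{\xi<\omega_1} X_\xi$ with $X_\xi$ analytic, plus a ccc poset and $\omega_1$ dense sets whose generic witnesses non-maximality over each $X_\xi$'' and then invoking MA($\omega_1$); the per-family content is then just identifying the right poset and dense sets, which in all five cases is already supplied by the lemmas above.
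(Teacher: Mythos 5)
Your proposal has two problems, one of which is fatal for the mad family case. First, the claim that MA($\omega_1$) implies $\omega_1^{L[x]}<\omega_1$ for every real $x$ is false: forcing MA $+\ 2^{\aleph_0}=\omega_2$ over $L$ by the usual finite support ccc iteration preserves cardinals, so the resulting model satisfies MA($\omega_1$) together with $\omega_1=\omega_1^{L}$. What MA($\omega_1$) gives you is only that $\mathcal{P}(\omega)\cap L[x]$ is a \emph{proper} subset of $\mathcal{P}(\omega)$ (it has size at most $\omega_1<\mathfrak{c}$), which is much weaker than $\mathcal{P}(\omega)\cap L[x]$ being countable. Consequently you cannot obtain the theorem by combining MA($\omega_1$) with Theorems~\ref{thm:omega1}, \ref{thm:independent}, \ref{thm:hamel} and \ref{thm:ultrafilter}; the arguments really must be run inside $V$, as you then go on to do. For ilt's your Mathias-forcing-plus-MA argument is correct, though it can be shortened: a $\mathbf{\Sigma}^1_2$ ilt has a cofinal subset of size $\omega_1$ (corollary to Theorem~\ref{thm:noanalyticmlt}), and MA($\omega_1$) gives $\mathfrak{p}>\omega_1$, so that subset already has a pseudointersection. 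For independent families and Hamel bases your argument coincides with the paper's (a Cohen real over a suitable $\omega_1$-sized model exists because under MA($\omega_1$) the union of $\omega_1$ many meager sets is meager, and Lemmas~\ref{lem:independent} and~\ref{lem:hamel} do the rest); for ultrafilters your category argument can be made to work (every analytic subset of a non-principal filter is meager, so the whole $\mathbf{\Sigma}^1_2$ ultrafilter would be meager), although the paper instead uses Lebesgue measurability of $\mathbf{\Sigma}^1_2$ sets under MA($\omega_1$) and the non-measurability of ultrafilters.

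The genuine gap is the mad family case. There is no ``analogous finite-approximation ccc poset'' that does the job: to force an infinite set almost disjoint from a mad family $X=\bigcup_{\xi<\omega_1}X_\xi$ with the natural poset of pairs $(s,F)$, $F\in[X]^{<\omega}$, you must meet one dense set for each $y\in X$, and $\vert X\vert\geq\mathfrak{a}>\omega_1$ under MA($\omega_1$), so MA($\omega_1$) does not apply. To get by with only $\omega_1$ many dense sets you would need a lemma of the form ``some single generic real is infinite and almost disjoint from \emph{every} member of a given analytic a.d.\ family'' --- the analogue of Lemma~\ref{lem:independent} --- and no such lemma is available off the shelf: a Cohen real is useless here (it meets every ground model infinite set infinitely often), and an analytic a.d.\ family is in general not countably generated modulo finite, so no countable collection of dense conditions in a Mathias-type poset captures a whole piece $X_\xi$ at once. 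Producing exactly this kind of diagonalization against analytic almost disjoint families is the hard combinatorial core of T\"ornquist's work, which is why the paper simply cites \cite{AT1} for the mad family case rather than adapting Lemma~\ref{lem:destroysigma12tower}.
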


\begin{proof}
 For mad families this was proven in \cite{AT1}. For ilt's Theorem~\ref{thm:towersubsetofL} is enough. For ultrafilters it suffices to note that under MA($\omega_1$) every $\mathbf{\Sigma}^1_2$ set is Lebesgue measurable (see \cite{AKA}) and an ultrafilter cannot be Lebesgue measurable. The argument for independent families and Hamel bases is the same. Write $X = \bigcup_{\xi < \omega_1} B_\xi$ where the $B_\xi$'s are analytic. Let $M$ be an elementary submodel of size $\omega_1$ containing all the parameters defining the $B_\xi$'s. Then let $c \in V$ be Cohen over $M$ and use Lemma~\ref{lem:independent} or Lemma~\ref{lem:hamel} to conclude that $c$ is independent or linearly independent over $X$.
\end{proof}

\section{Solovay's model}

In this section we prove the following result. 

\begin{theorem}
\label{thm:solovay}
 There is no ilt in Solovay's model. 
\end{theorem}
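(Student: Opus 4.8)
The plan is to work inside Solovay's model, the standard setup being $V = L(\mathbb{R})^{M[G]}$ where $M$ is a ground model, $\kappa$ is inaccessible in $M$, and $G$ is $\mathrm{Col}(\omega, {<}\kappa)$-generic. The essential feature I would exploit is that in Solovay's model every set of reals is ``$\infty$-Baire'' / has nice regularity properties, and more specifically every set of reals is definable from a real and an ordinal. So suppose toward a contradiction that $X \subseteq [\omega]^\omega$ is an ilt in Solovay's model. I would fix a real parameter $a$ and a formula so that $X = \{ x : \varphi(x, a) \}$, and by the homogeneity of the Levy collapse I may assume $X$ is definable (in $M[G]$) from $a$ together with ordinal parameters, where $a \in M[G \restriction \alpha]$ for some $\alpha < \kappa$.

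The core of the argument is a genericity/absoluteness step in the spirit of Theorem~\ref{thm:noanalyticmlt} and Lemma~\ref{lem:destroysigma12tower}. The idea is that because the definition of $X$ is absolute between the relevant intermediate models by homogeneity of the collapse, a Cohen-generic (or Mathias-type generic) real added over a model containing $a$ can be pushed into $X$ or used to witness a pseudointersection, contradicting inextendibility. Concretely, first I would establish a cofinality reduction analogous to the Claim inside Theorem~\ref{thm:noanalyticmlt}: for a suitable forcing $T$ (Cohen forcing) whose generic produces a candidate element of $X$, I can arrange that all conditions force the generic into the cofinal part of $X$. Since $X$ is linearly ordered by $\subseteq^*$ and this is absolute, for each $y \in X$ some condition must decide $\dot c \subseteq^* y$ in one of the two directions; the cofinality of every cone rules out the direction $y \subseteq^* \dot c$, forcing $\dot c \subseteq^* y$ on a dense set, and then cofinality of $Y \subseteq X$ yields the contradiction exactly as in the analytic case.

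The point where Solovay's model does the real work, and where this departs from the boldface $\mathbf{\Sigma}^1_1$ result, is that in $M[G]$ the definition $\varphi(\cdot, a)$ remains meaningful and (by the factorization $M[G] = M[G\restriction\alpha][G']$ with $G'$ generic for a collapse that is homogeneous and adds the requisite generic reals over $M[G\restriction\alpha]$) the set $X$ is reinterpreted correctly in the intermediate models. Thus I can find, inside $M[G]$, a real $c$ that is Cohen (or Mathias) over a model $N \ni a$ for which $X \cap N$ is an ilt from $N$'s point of view, run the forcing argument there, and transfer the conclusion upward by the absoluteness of ``$c$ is a pseudointersection'' — the statement $\forall y (\varphi(y,a) \rightarrow c \subseteq^* y)$ being low in the projective hierarchy once $X$ is written as a union of its analytic approximations. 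This produces a pseudointersection of $X$ in $M[G]$, contradicting inextendibility.

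The hard part, and the step I would spend the most care on, is the bookkeeping around \emph{which} forcing to use and ensuring the generic real it produces actually lands below every element of $X$ rather than merely below a cofinal approximation. In Lemma~\ref{lem:destroysigma12tower} the filter $\mathcal{F}$ generated by the lower bounds $x_\xi$ of the analytic pieces was used to define Mathias forcing $\mathbb{M}(\mathcal{F})$; the analogous move here requires that the $\omega_1$-union representation of $X$ as analytic sets be available and that the collapse supply a generic for $\mathbb{M}(\mathcal{F})$ over the appropriate intermediate model. The delicate issue is that in Solovay's model $\omega_1$ is not accessible in the same way, so rather than the $\Sigma^1_2$ representation I expect to rely on the fact that in the collapse every real appears at some bounded stage and the tail of the collapse is generic for a homogeneous poset; I would need to verify that this tail generic either directly is, or easily computes, a pseudointersection-producing generic over the model containing the code for $X$. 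Making this transfer precise — matching the regularity-under-definability of Solovay's model to the concrete genericity requirement of the forcing argument — is the crux.
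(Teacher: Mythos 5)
Your setup is right: you correctly identify the relevant facts about Solovay's model (every set of reals is definable from a real $a$ and ordinals, every real is $\sigma[G]$ for some small forcing over $L[a]$, and $W$ contains generics for all small posets over $L[a]$), and your instinct to build a filter $\mathcal{F}$ of ``lower bounds'' and force with $\mathbb{M}(\mathcal{F})$ over the intermediate model is exactly what the paper does. But there is a genuine gap at precisely the point you flag as the crux, and you do not close it. The argument of Lemma~\ref{lem:destroysigma12tower} extracts the generators $x_\xi$ of $\mathcal{F}$ from the representation of a $\mathbf{\Sigma}^1_2$ set as an $\omega_1$-union of analytic sets, each of which has a lower bound in $X$ by Theorem~\ref{thm:noanalyticmlt}. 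In Solovay's model $X$ is an arbitrary set of reals with no projective complexity bound, so neither that representation nor the tree/cofinal-cone argument of Theorem~\ref{thm:noanalyticmlt} is available, and ``the tail of the collapse is homogeneous'' does not by itself produce the filter. Your proposal never says what the generators of $\mathcal{F}$ actually are.

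The paper's missing ingredient is a product-forcing lemma. Given a triple $(\mathbb{P},p,\sigma)$ with $p \forces \varphi(\sigma)$, one passes to $\mathbb{P}\times\mathbb{P}$ with two copies $\sigma_0,\sigma_1$ of $\sigma$; since $(p,p)$ forces both into $X$ and linearity of $X$ by $\subseteq^*$ is part of the hypothesis, some $(p_0,p_1)\leq(p,p)$ forces, say, $\sigma_0\setminus n\subseteq\sigma_1$. This yields a condition $p_1$ such that $x_{(\mathbb{P},p,\sigma)} := \{m : p_1\forces m\in\sigma\}$ contains every $m\geq n$ that is forced into $\sigma$ by \emph{some} extension of $p_0$; in particular it is an infinite ground-model set with $p_1\forces x_{(\mathbb{P},p,\sigma)}\subseteq^*\sigma$. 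A second product argument (over finite products of the posets) shows these sets have the finite intersection property, which is what makes $\mathbb{M}(\mathcal{F})$ sensible; and density of the conditions $p_1$ below each $p$ with $p\forces\varphi(\sigma)$ is what guarantees the Mathias generic lies $\subseteq^*$-below \emph{every} element of $X$, not just a cofinal family. Without this decomposition of $X$ by forcing names in place of the unavailable analytic decomposition, the argument does not go through.
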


Let us review some basics about Solovay's model. A good presentation of Solovay's model can be found in \cite[Chapter 26]{TJ1}. Assuming $\kappa$ is an inaccessible cardinal in the constructible universe $L$ we first form an extension $V$ of $L$ in which $\omega_1 = \kappa$ using the L\'evy collapse (see again \cite[Chapter 26]{TJ1}). Then we let $W \subseteq V$ consist of all sets which are hereditarily definable from ordinals and reals as the only parameters. $W$ is then called Solovay's model. The only facts that we use about $W$ are listed below and are well-known.  

Suppose $a \in 2^\omega \cap W$ is arbitrary, then

 \begin{enumerate}
  \item for every poset $\mathbb{P} \in H(\kappa)^{L[a]}$, there is a $\mathbb{P}$ generic filter over $L[a]$ in $W$, 
  \item whenever $x \in 2^\omega \cap W$, there is a poset $\mathbb{P} \in H(\kappa)^{L[a]}$, $\sigma \in H(\kappa)^{L[a]}$ a $\mathbb{P}$ name and $G \in W$ a $\mathbb{P}$ generic over $L[a]$ so that $x = \sigma[G]$.
 \end{enumerate}

 Suppose $X \in \mathcal{P}(2^\omega) \cap W$. Then there is $a \in 2^\omega \cap W$ and a formula $\varphi(x)$ in the language of set theory using only $a$ and ordinals as parameters so that 
 
 \begin{enumerate}
 \setcounter{enumi}{2}
  \item for any poset $\mathbb{P} \in H(\kappa)^{L[a]}$, $\sigma \in H(\kappa)^{L[a]}$ a $\mathbb{P}$ name and $G \in W$, $\mathbb{P}$ generic over $L[a]$, $$ \sigma[G] \in X \leftrightarrow \exists p \in G (p \forces \varphi(\sigma)) .$$
 \end{enumerate}

Until the end of the section we are occupied with proving Theorem~\ref{thm:solovay}.
To prove Theorem~\ref{thm:solovay}, assume that $X \in \mathcal{P}(2^\omega) \cap W$ is linearly ordered with respect to $\subseteq^*$. We will show that $X$ cannot be an ilt. Let $a \in 2^{\omega} \cap W$ and $\varphi(x)$ be as in (3). To simplify notation we will assume that $a \in L$ and thus $L[a] = L$. From now on let us work in $L$. 

\begin{lemma}
 Let $\mathbb{P} \in H(\kappa)$, $p \in \mathbb{P}$ and $\sigma$ a $\mathbb{P}$ name so that $p \forces \varphi(\sigma)$. Then there is $p_0, p_1 \leq p$ and $n \in \omega$ so that for any $m \geq n$, $$\exists r \leq p_0 (r \forces m \in \sigma) \rightarrow p_1 \forces m \in \sigma.$$ 
\end{lemma}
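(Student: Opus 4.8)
The plan is to exploit the linearity of $X$ under $\subseteq^*$ by comparing two mutually generic copies of $\sigma$ against each other. I work in $L$ throughout. Since $\kappa$ is inaccessible, $\mathbb{P} \times \mathbb{P} \in H(\kappa)$; let $\sigma_0$ and $\sigma_1$ be the $\mathbb{P}\times\mathbb{P}$-names that read $\sigma$ off the left and right coordinate respectively, so that $\sigma_0[G \times H] = \sigma[G]$ and $\sigma_1[G\times H] = \sigma[H]$ for any $\mathbb{P}\times\mathbb{P}$-generic $G \times H$. I will use the standard product-forcing fact that an atomic statement depending on only one coordinate is decided by that coordinate alone: $(q_0,q_1) \forces m \in \sigma_0$ iff $q_0 \forces_{\mathbb{P}} m \in \sigma$, and $(q_0,q_1) \forces m \in \sigma_1$ iff $q_1 \forces_{\mathbb{P}} m \in \sigma$.

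First I would show
$$(p,p) \forces_{\mathbb{P}\times\mathbb{P}} \sigma_0 \subseteq^* \sigma_1 \vee \sigma_1 \subseteq^* \sigma_0.$$
Suppose not; then some $(q_0,q_1) \leq (p,p)$ forces the negation. By the Solovay fact (1), applied to the restriction of $\mathbb{P}\times\mathbb{P}$ below $(q_0,q_1)$, there is a $\mathbb{P}\times\mathbb{P}$-generic $G\times H \in W$ over $L$ with $(q_0,q_1) \in G\times H$. Both $G$ and $H$ are then $\mathbb{P}$-generic over $L$, contain $p$, and lie in $W$, so by fact (3) we get $\sigma[G],\sigma[H] \in X$. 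As $X$ is linearly ordered by $\subseteq^*$ in $W$, one of $\sigma[G]\subseteq^*\sigma[H]$ or $\sigma[H]\subseteq^*\sigma[G]$ holds in $W$; since $\subseteq^*$ is arithmetical, this is absolute to $L[G\times H]$, contradicting the choice of $(q_0,q_1)$. Hence the displayed statement holds. Refining $(p,p)$, I can then find $(p_0,p_1) \leq (p,p)$ and $n \in \omega$ deciding both which disjunct holds and the witness $n$; relabelling the two coordinates if necessary (the two cases being symmetric), I may assume $(p_0,p_1) \forces \forall m \geq n\,(m \in \sigma_0 \to m \in \sigma_1)$.

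Finally I would read off the conclusion. Fix $m \geq n$ and suppose $r \leq p_0$ satisfies $r \forces_{\mathbb{P}} m \in \sigma$. By the coordinate fact $(r,p_1) \forces m \in \sigma_0$, and since $(r,p_1) \leq (p_0,p_1)$ forces $m \in \sigma_0 \to m \in \sigma_1$, we get $(r,p_1) \forces m \in \sigma_1$. But $m \in \sigma_1$ depends only on the right coordinate, so $(r,p_1)\forces m \in \sigma_1$ is equivalent to $p_1 \forces_{\mathbb{P}} m \in \sigma$. This is exactly the required implication for this $m$, so $p_0$, $p_1$, $n$ witness the lemma.

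The main obstacle is the bookkeeping between $W$ and $L$: the linear order lives in $W$ and is accessible only through facts (1)--(3), whereas the forcing relation and the names live in $L$. The crux is therefore to establish the displayed disjunction as a genuine $L$-forcing statement using only the $W$-generics supplied by fact (1); this is legitimate precisely because $\subseteq^*$ is arithmetical and hence absolute between $W$ and the relevant $\mathbb{P}\times\mathbb{P}$-extension of $L$. Once this is in place, the coordinate-separation property of product forcing makes the final step routine.
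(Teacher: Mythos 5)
Your proof is correct and follows essentially the same route as the paper: pass to $\mathbb{P}\times\mathbb{P}$ with the two coordinate copies $\sigma_0,\sigma_1$ of $\sigma$, use the linearity of $X$ (via the Solovay facts) to find $(p_0,p_1)\leq(p,p)$ and $n$ forcing $\sigma_0\setminus n\subseteq\sigma_1$ up to relabelling, and then read off the implication from the coordinate-separation property of product forcing. Your write-up is in fact slightly more explicit than the paper's in justifying that $(p,p)$ forces the $\subseteq^*$-comparability of $\sigma_0$ and $\sigma_1$, which the paper leaves implicit.
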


\begin{proof}
 Consider $\mathbb{P} \times \mathbb{P} \in H(\kappa)$ and $\sigma_0$ and $\sigma_1$ the $\mathbb{P} \times \mathbb{P}$ names so that whenever $G_0 \times G_1$ is $\mathbb{P} \times \mathbb{P}$ generic over $V$ then $\sigma_0[G_0 \times G_1] = \sigma[G_0]$, $\sigma_1[G_0 \times G_1] = \sigma[G_1]$. 
 
 Note that $(p,p) \forces \varphi(\sigma_0) \wedge \varphi(\sigma_1)$, because whenever $G_0 \times G_1$ is $\mathbb{P} \times \mathbb{P}$ generic over $V$ with $(p,p) \in G_0 \times G_1$ then $G_0$ and $G_1$ are $\mathbb{P}$ generic over $V$ with $p \in G_0, G_1$. But then there must be $(p_0,p_1) \leq (p,p)$ and $n \in \omega$ so that either, $$(p_0,p_1) \forces \sigma_0 \setminus n \subseteq \sigma_1$$ or $$(p_0,p_1) \forces \sigma_1 \setminus n \subseteq \sigma_0.$$ 
 
Say wlog that $(p_0,p_1) \forces \sigma_0 \setminus n \subseteq \sigma_1$. Note that whenever $\exists r_0 \leq p_0 (p_0 \forces m \in \sigma)$ for some $m \geq n$ then $p_1 \forces m \in \sigma$. Suppose this was not the case. Then there is $r_1 \leq p_1$ so that $r_1 \forces m \notin \sigma$. But then $(r_0,r_1) \forces \exists m \geq n (m \in \sigma_0 \wedge m \notin \sigma_1)$ which is a contradiction to $(r_0,r_1) \leq (p_0,p_1)$.  
\end{proof}

Still in $L$, let $\langle\mathbb{P}_\xi, p_\xi, \sigma_\xi : \xi < \kappa \rangle$ enumerate all triples $\langle \mathbb{P}, p, \sigma\rangle $, where $\mathbb{P} \in H(\kappa)$, $p \in \mathbb{P}$ and $\sigma \in H(\kappa)$ is a $\mathbb{P}$ name so that $p \forces \varphi(\sigma)$. This is possible as $\vert H(\kappa) \vert = \kappa$. 
 
For every $\xi < \kappa$ we find $p_\xi^0, p_\xi^1 \leq p_\xi$ in $\mathbb{P}_\xi$ and $n \in \omega$ so that for every $m \geq n$ $$\exists r \leq p_\xi^0 (r \forces m \in \sigma_\xi) \rightarrow p_\xi^1 \forces m \in \sigma_\xi.$$ 

Let $x_\xi = \{ m \in \omega : p_\xi^1 \forces m \in \sigma_\xi \}$ for every $\xi < \kappa$. 

\begin{clm}
 $\{x_\xi : \xi < \kappa \}$ has the finite intersection property. 
\end{clm}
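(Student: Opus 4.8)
The plan is to show that each $x_\xi$ almost-contains a genuine element of $X$; once this is established, the finite intersection property falls out of the linearity of $X$ under $\subseteq^*$. So the core of the argument is the assertion that for every $\xi < \kappa$ there is $u_\xi \in X$ with $u_\xi \subseteq^* x_\xi$, and the whole claim reduces to proving this sandwiching.

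To prove it, I would fix $\xi$ and let $n$ be the natural number provided by the Lemma for the triple $\langle \mathbb{P}_\xi, p_\xi, \sigma_\xi \rangle$. Using fact (1) (applied, if necessary, to $\mathbb{P}_\xi$ restricted below $p_\xi^0$) I would pick, inside $W$, a filter $G$ that is $\mathbb{P}_\xi$-generic over $L$ and contains $p_\xi^0$, and set $u_\xi = \sigma_\xi[G]$. Since $p_\xi^0 \leq p_\xi$ and $p_\xi \forces \varphi(\sigma_\xi)$, upward closure of $G$ gives $p_\xi \in G$, so fact (3) yields $u_\xi \in X$. To see $u_\xi \subseteq^* x_\xi$, take any $m \in u_\xi$ with $m \geq n$. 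Then some $r \in G$ forces $m \in \sigma_\xi$, and since $p_\xi^0 \in G$ as well, directedness of $G$ furnishes $r' \in G$ with $r' \leq r, p_\xi^0$ and $r' \forces m \in \sigma_\xi$. Thus $\exists r' \leq p_\xi^0 (r' \forces m \in \sigma_\xi)$, so the Lemma gives $p_\xi^1 \forces m \in \sigma_\xi$, i.e. $m \in x_\xi$. Hence $u_\xi \setminus n \subseteq x_\xi$, which is exactly $u_\xi \subseteq^* x_\xi$.

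Granting the sandwiching, the finite intersection property is routine. Given $\xi_1, \dots, \xi_k < \kappa$, the members $u_{\xi_1}, \dots, u_{\xi_k}$ all lie in $X$, and since $X$ is linearly ordered by $\subseteq^*$ this finite set has a $\subseteq^*$-least element $u_{\xi_{j_0}}$. Then $u_{\xi_{j_0}} \subseteq^* u_{\xi_j} \subseteq^* x_{\xi_j}$ for every $j \leq k$, so $u_{\xi_{j_0}} \subseteq^* \bigcap_{j \leq k} x_{\xi_j}$; as $u_{\xi_{j_0}}$ is infinite, so is the intersection.

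I expect the only delicate point to be the middle step: producing the generic $G \in W$ through $p_\xi^0$ and correctly invoking both fact (3) (to place $u_\xi$ inside $X$) and the Lemma (to push $u_\xi$ almost-inside $x_\xi$). The passage from ``$m \in \sigma_\xi[G]$'' to ``some condition below $p_\xi^0$ forces $m \in \sigma_\xi$'' rests on directedness of $G$ and must be handled with care, since the Lemma quantifies only over extensions of $p_\xi^0$; the concluding linear-order argument is then purely combinatorial.
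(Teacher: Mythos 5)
Your proof is correct, and it reorganizes the argument in a way that differs from the paper's. The paper proves the claim by contradiction: assuming $\bigcap_{i<k} x_{\xi_i} \subseteq n$, it passes to the finite product $\prod_{i<k}\mathbb{P}_{\xi_i}$ below $(p^0_{\xi_0},\dots,p^0_{\xi_{k-1}})$, notes that this condition forces all $k$ names to satisfy $\varphi$, uses the finite intersection property of $X$ (witnessed through a generic in $W$) to find a single condition forcing some common $m \geq n$ into all $k$ names, and then applies the Lemma coordinatewise to conclude $m \in x_{\xi_i}$ for every $i$. You instead isolate the statement that each individual $x_\xi$ almost-contains an actual member $u_\xi = \sigma_\xi[G]$ of $X$, obtained from a single generic through $p^0_\xi$, and let the linearity of $X$ under $\subseteq^*$ handle the finite bookkeeping afterwards. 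Both arguments rest on exactly the same ingredients---the existence in $W$ of generics over $L$ through a prescribed condition (via restricting the poset below $p^0_\xi$), fact (3) to place the realized name inside $X$, and the implication supplied by the Lemma---but yours avoids the product posets entirely at this stage and records the slightly stronger structural fact that each $x_\xi$ is sandwiched between members of $X$ (namely $u_\xi \subseteq^* x_\xi$ from your argument, and $p^1_\xi \forces \check{x}_\xi \subseteq \sigma_\xi$ directly from the definition of $x_\xi$), which makes it transparent why the generated filter is nontrivial. The step you flag as delicate, passing from $m \in \sigma_\xi[G]$ to the existence of a condition below $p^0_\xi$ forcing $m \in \sigma_\xi$ via directedness of $G$, is handled correctly.
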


\begin{proof}[Proof of Claim.]
Suppose $x_{\xi_0}, \dots x_{\xi_{k-1}}$ are such that $\bigcap_{i<k} x_{\xi_i}$ is finite, say $\bigcap_{i<k} x_{\xi_i} \subseteq n$. Consider the poset $\mathbb{Q} = \prod_{i<k} \mathbb{P}_{\xi_i} \in H(\kappa)$, $(p_{\xi_0}^0, \dots,p_{\xi_{k-1}}^0 ) \in \mathbb{Q}$ and for every $i<k$, $\sigma_i$ the $\mathbb{Q}$ name so that whenever $(G_0,\dots, G_{k-1})$ is $\mathbb{Q}$ generic then $\sigma_i[G_0 \times \dots \times G_{k-1}] = \sigma_{\xi_i}[G_i]$. 

We have that $(p_{\xi_0}^0, \dots,p_{\xi_{k-1}}^0 ) \forces \varphi(\sigma_0) \wedge \dots \wedge \varphi(\sigma_{k-1})$ and thus, as $X$ has the finite intersection property, there is $m \geq n$ and $(r_0, \dots, r_{k-1}) \leq (p_{\xi_0}^0, \dots,p_{\xi_{k-1}}^0)$ so that $$(r_0, \dots, r_{k-1}) \forces m \in \bigcap_{i<k} \sigma_i.$$ 

But this means that $r_i \forces m \in \sigma_i$ and thus $m \in x_{\xi_i}$ for each individual $i$. This contradicts $\bigcap_{i<k} x_{\xi_i} \subseteq n$ as $m \geq n$.
\end{proof}

Let $\mathcal{F}$ be the filter generated by $\{x_\xi : \xi < \kappa \}$. We have that $\mathcal{F}\in \mathcal{P}([\omega]^\omega)$ and thus $\mathcal{F} \in H(\kappa)$. Moreover we have that $\mathbb{M}( \mathcal{F}) \in H(\kappa)$. Thus in $W$ there is $y \in [\omega]^\omega$ a $\mathbb{M}( \mathcal{F})$ generic real over $L$. 

\begin{clm}
 For every $x \in X$, $y \subseteq^* x$. In particular $X$ is not an ilt.  
\end{clm}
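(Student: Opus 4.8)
The goal is to produce, for each $x \in X$, the inclusion $y \subseteq^* x$; once this holds for every $x \in X$, the infinite set $y \in [\omega]^\omega \cap W$ is a pseudointersection of $X$ inside $W$, so $X$ has a pseudointersection in $W$ and cannot be an ilt. The plan is to reduce $y \subseteq^* x$ to a statement about the generators $x_\xi$. Since $\mathcal{F}$ is generated by $\{x_\xi : \xi < \kappa\}$ and $y$ is $\mathbb{M}(\mathcal{F})$-generic over $L$, we already have $y \subseteq^* x_\xi$ for every $\xi < \kappa$. Hence it suffices to find, for each $x \in X$, a single index $\xi$ with $x_\xi \subseteq^* x$, since then $y \subseteq^* x_\xi \subseteq^* x$.

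To locate such an index I would first represent $x$ generically: using (2) and (3) (with $a \in L$, so $L[a]=L$) fix $\mathbb{P} \in H(\kappa)$, a $\mathbb{P}$-name $\sigma$, a generic $G \in W$ over $L$ with $x = \sigma[G]$, and a condition $p \in G$ with $p \forces \varphi(\sigma)$. The key elementary observation is that $x_\xi \subseteq \sigma_\xi[G]$ as soon as $p_\xi^1 \in G$: indeed $x_\xi = \{m : p_\xi^1 \forces m \in \sigma_\xi\}$ by definition, so if $p_\xi^1 \in G$ then every $m \in x_\xi$ is forced into $\sigma_\xi$ by an element of $G$ and thus lies in $\sigma_\xi[G]$. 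So I only need to arrange that $G$ meets one of the constructed conditions $p_\xi^1$ coming from a triple with $\mathbb{P}_\xi = \mathbb{P}$, $\sigma_\xi = \sigma$ and $p_\xi \leq p$. This is a pure density argument carried out in $L$: for every $r \leq p$ one has $r \forces \varphi(\sigma)$, so the triple $\seq{\mathbb{P}, r, \sigma}$ occurs in the enumeration $\seq{\mathbb{P}_\eta, p_\eta, \sigma_\eta : \eta < \kappa}$ at some $\eta$ with $p_\eta = r$, and the associated condition satisfies $p_\eta^1 \leq p_\eta = r$. Hence $D = \{p_\eta^1 : \mathbb{P}_\eta = \mathbb{P},\ \sigma_\eta = \sigma,\ p_\eta \leq p\}$ is dense below $p$. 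By genericity $G$ meets $D$, producing $\xi$ with $p_\xi^1 \in G$, $\mathbb{P}_\xi = \mathbb{P}$, $\sigma_\xi = \sigma$; for this $\xi$ the observation gives $x_\xi \subseteq \sigma_\xi[G] = x$, and therefore $y \subseteq^* x$.

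The step I expect to be the crux is precisely this density/absorption move, and the main point to get right is that it does \emph{not} require $y$ and $G$ to be mutually generic: one never forces with $y$ over $L[G]$ or conversely, but only uses that $G$ is generic over $L$ to meet a dense set $D \in L$ read off from the fixed enumeration, together with the already-secured fact $y \subseteq^* x_\xi$. It is worth emphasising the division of labour here. The preceding Lemma and the finite-intersection-property Claim are exactly what guarantee that $\mathcal{F}$ is a proper filter of infinite sets, so that $\mathbb{M}(\mathcal{F})$ is nontrivial and the generic $y$ is a genuine infinite pseudointersection of $\mathcal{F}$; the present Claim then needs only the trivial containment $x_\xi \subseteq \sigma_\xi[G]$ under the hypothesis $p_\xi^1 \in G$. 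Finally, since $x \in X$ was arbitrary and $y \in [\omega]^\omega \cap W$, the set $y$ witnesses that $X$ has a pseudointersection in $W$, which completes the proof that $X$ is not an ilt and hence that there is no ilt in Solovay's model.
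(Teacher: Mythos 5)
Your proof is correct and follows essentially the same route as the paper: reduce $y \subseteq^* x$ to finding $\xi$ with $x_\xi \subseteq x = \sigma[G]$, and obtain such a $\xi$ by the density below $p$ of the conditions $p_\eta^1$ coming from triples $\langle \mathbb{P}, p', \sigma\rangle$ with $p' \leq p$ in the fixed enumeration. Your explicit remark that only genericity of $G$ over $L$ (meeting the dense set $D \in L$) is needed, and not mutual genericity with $y$, is a correct and welcome clarification of a point the paper leaves implicit.
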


\begin{proof}[Proof of Claim]
Let $x \in X$ be arbitrary. Then we have in $L$ a poset $\mathbb{P} \in H(\kappa)$ and a $\mathbb{P}$ name $\sigma$ so that there is in $W$ a $\mathbb{P}$ generic $G$ over $V$ so that $x = \sigma[G]$. Moreover there is $p \in G$ so that $p \forces \varphi(\sigma)$. 

It suffices to show that there is some $\xi < \kappa$ and $q \in G$ so that $q \forces x_\xi \subseteq^* \sigma$. To see this we simply show that the set of conditions $q \in \mathbb{P}$ so that $\exists \xi < \kappa (q \forces x_\xi \subseteq^* \sigma)$ is dense below $p$. To show this fix $p' \leq p$ arbitrary. Let $\xi$ be such that $\langle \mathbb{P}, p', \sigma \rangle = \langle \mathbb{P}_\xi, p_\xi, \sigma_\xi \rangle$. But then $p_\xi^1 \leq p_\xi$ and $p_\xi^1 \forces x_\xi \subseteq^* \sigma_\xi$.  
\end{proof}

This finishes the proof of Theorem~\ref{thm:solovay}.

\section{$\mathbf{\Sigma}^1_2$ vs $\mathbf{\Pi}^1_1$}

\begin{theorem}
 Any $\Pi^1_1(x)$ ilt contains a $\Pi^1_1(x)$ maximal tower. 
\end{theorem}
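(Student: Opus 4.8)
The plan is to reduce everything to $L[x]$, construct the tower there by the coding method of Theorem~\ref{thm:coanalytictower}, and transport maximality back to $V$ by absoluteness. Let $X$ be a $\Pi^1_1(x)$ ilt and set $X^{L[x]} = X \cap L[x]$. Since membership in $X$ is $\Pi^1_1(x)$ and $\subseteq^*$ is Borel, both are absolute between $L[x]$ and $V$, so $X^{L[x]}$ is exactly the set defined by the same formula in $L[x]$ and is linearly ordered by $\subseteq^*$ in both models. The first and crucial step is the claim that $L[x] \models$ ``$X^{L[x]}$ has no pseudointersection''. To see this, suppose $q \in L[x]$ were a pseudointersection, i.e.\ $q \subseteq^* r$ for all $r \in X^{L[x]}$; by absoluteness this holds in $V$ as well. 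Since $X$ is an ilt in $V$, $q$ is not a pseudointersection of $X$, so by linearity there is $w \in X$ with $w \subsetneq^* q$. The set $B = \{ v : v \in X \wedge v \subsetneq^* q\}$ is then a nonempty $\Sigma^1_2(x,q)$ set, so by the Mansfield--Solovay basis theorem \cite[Theorem 21.1]{AM} it has a member $v \in L[x,q] = L[x]$. Then $v \in X^{L[x]}$ and $v \subsetneq^* q$, contradicting $q \subseteq^* v$. Hence $X^{L[x]}$, being linearly ordered with no pseudointersection, is an ilt in $L[x]$.

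Working inside $L[x]$ (which satisfies $V = L[x]$), I would now mimic the construction in the proof of Theorem~\ref{thm:coanalytictower}, but selecting the tower elements from $X^{L[x]}$: recursively let $x_\xi$ be the $<_L$-least element of $X^{L[x]}$ with $x_\xi \subsetneq^* x_\nu$ for all $\nu < \xi$, together with the bookkeeping ordinals $\delta(\xi)$ and codes $z_\xi$ of that proof. By the claim the chain never has a pseudointersection inside $X^{L[x]}$, so the recursion continues until $\langle x_\nu : \nu < \xi\rangle$ is cofinal in $X^{L[x]}$; the resulting $T = \{x_\xi : \xi < \omega_1^{L[x]}\}$ is a maximal tower of $L[x]$ contained in $X^{L[x]}$, and the coding yields a $\Pi^1_1(x)$ definition of $T$ exactly as before. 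Maximality then transfers upward: $T \subseteq X$, and the statement ``$T$ has a pseudointersection'', namely $\exists p\, \forall r\, (r \in T \rightarrow p \subseteq^* r)$, is $\Sigma^1_2(x)$ (membership in the $\Pi^1_1(x)$ set $T$ occurs negatively). It is false in $L[x]$ since $T$ is maximal there, hence false in $V$ by Shoenfield absoluteness. Thus $T$ is a $\Pi^1_1(x)$ maximal tower in $V$ contained in $X$. Note that Theorem~\ref{thm:towersubsetofL} explains why one is forced into $L[x]$: any $\Pi^1_1(x)$ tower is a subset of $L[x]$, so the desired tower must live in $X^{L[x]}$, which is precisely why the claim above is what is needed.

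The main obstacle is to keep the definition $\Pi^1_1(x)$ rather than merely $\Sigma^1_2(x)$ under the constraint that the tower elements are not constructed freely but must be chosen from $X^{L[x]}$. In Theorem~\ref{thm:coanalytictower} the requirement $z_\xi \leq_T x_\xi$, which bounds the witnessing countable model below $\omega_1^{x_\xi}$ and collapses the existential search over codes to a $\Pi^1_1$ quantifier, was met by thinning out $x_\xi$; here no such freedom is available, since $x_\xi$ is pinned to an element of the given family. Resolving this --- by attaching the ordinal codes through an appropriate choice among the many candidates in $X^{L[x]}$ lying below the current chain, so that the search for the witnessing model remains $\Pi^1_1(x)$ in the parameter $(x,x_\xi)$ --- is the delicate part, and follows the template of the corresponding ``$\Sigma^1_2 \Rightarrow \Pi^1_1$'' arguments for mad families \cite{AT2}, independent families \cite{BFK1} and eventually different families \cite{FS1}. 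Everything else (absoluteness of the linear order and of membership, the basis-theorem step, and the Shoenfield transfer of maximality) is robust and parameter-uniform, so I expect this coding bookkeeping to be the only genuinely technical point.
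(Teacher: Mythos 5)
Your proposal has two genuine gaps, and together they cover essentially all of the content of the theorem. First, the central construction is missing. You correctly identify the obstacle: when the tower elements must be drawn from the given family $X$, you cannot thin $x_\xi$ to code the ordinal $z_\xi$ into it, so the device of Theorem~\ref{thm:coanalytictower} for turning the search for a witnessing model into a $\Pi^1_1$ condition is unavailable. But you then leave this unresolved and defer to the ``template'' of the mad/independent/eventually-different family arguments. The paper in fact abandons the recursive coding construction entirely: it shows that $X \cap C_1$ is cofinal in $X$, where $C_1$ is the largest thin $\Pi^1_1$ set (ruling out a perfect ``tail'' of $X$ via the Martin--Friedman theorem that a perfect set realizes every $\Delta^1_1$-degree above a fixed one, and ruling out a thin tail by unboundedness of $\{\omega_1^z : z \in Y\}$); it then uniformizes the levels $x \in L_{\|r(x)\|}$ by a single recursive $r$ to obtain a cofinal $\Pi^1_1$ subset $Y$ carrying a \emph{uniformly $\Sigma^1_1$-definable} well-order $\lhd$; and finally it sets $T = \{y \in Y : \forall x \lhd y\, (y \subseteq^* x)\}$, which is $\Pi^1_1$ because the $\lhd$-predecessor relation is $\Sigma^1_1$. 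The part you flag as ``the delicate part'' is the entire theorem, and the route you sketch for it is not the one that works.

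Second, your transfer of maximality back to $V$ is incorrect as stated. With $T$ a $\Pi^1_1(x)$ set, the statement $\exists p\, \forall r\, (r \in T \rightarrow p \subseteq^* r)$ has a $\Sigma^1_1(x)$ matrix (membership occurs negatively, as you note), so the quantifier $\forall r$ produces a $\Pi^1_2(x)$ formula and the outer $\exists p$ a $\Sigma^1_3(x)$ one --- not $\Sigma^1_2(x)$ --- and Shoenfield absoluteness does not apply; a real of $V \setminus L[x]$ could in principle be a pseudointersection of $T$ even though $T$ is maximal in $L[x]$. The paper transfers maximality by a different mechanism: it first proves that $X \cap L$ is \emph{cofinal in $X$ as computed in $V$} (writing $X$ as a union of $\omega_1 = \omega_1^L$ analytic pieces, each of which acquires a lower bound in $X \cap L$ by the argument of Corollary~2.5 applied in $L$, a fact that is upward absolute since ``$x$ is a pseudointersection of the analytic set $X_\xi$'' is $\Pi^1_1$), and then builds $T$ cofinal in $X \cap L$; cofinality in $X$ plus inextendibility of $X$ in $V$ gives maximality of $T$ in $V$ with no further absoluteness argument. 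Your reduction only shows that $X \cap L[x]$ is an ilt \emph{inside} $L[x]$ --- and even there the basis step should invoke the Shoenfield basis theorem for nonempty $\Sigma^1_2$ sets rather than Mansfield--Solovay, since an ilt, unlike a tower, need not be thin --- whereas cofinality of $X \cap L[x]$ in $X^V$ is the statement you actually need and do not prove.
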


\begin{proof}
 We are going to prove the statement only for lightface $\Pi^1_1$ as everything will relativize. So let $X$ be a $\Pi^1_1$ ilt. 
 
 \begin{clm}
  $X \cap L$ is cofinal in $X$ (where $L$ is the constructible universe). 
 \end{clm}

 \begin{proof}
  By Theorem~\ref{thm:omega1} we have that $\omega_1 = \omega_1^L$ must be the case. Thus $X$ can be written as a union $\bigcup_{ \xi < \omega_1} X_\xi$ of analytic sets $X_\xi$ which are coded in $L$ (see the proof of Lemma~\ref{lem:destroysigma12tower}). Note that $X \cap L$ is an ilt in $L$ by a downwards absoluteness argument. This implies that for every $\xi < \omega_1$ there is $x \in L \cap X$ which is a pseudointersection of $X_\xi$. The statement ``$x$ is a pseudointersection of $X_\xi$'' is absolute. Thus $X \cap L$ is indeed cofinal in $X$. 
 \end{proof}

 We may now work entirely in $L$, assume $X \in L$ and construct a $\Pi^1_1$ tower that is cofinal in $X$ (which implies that it is cofinal in $X$ as interpreted in $V$).

 Recall that $C_1 = \{ x \in 2^\omega : x \in L_{\omega_1^x} \}$ is the largest thin $\Pi^1_1$ set and for any $y$, $C_1(y) = \{ x \in 2^\omega : x \in L_{\omega_1^x}[y] \}$ is the largest thin $\Pi^1_1(y)$ set (see e.g \cite{YM1}).  
 
 \begin{clm}
  $C_1 \cap X$ is cofinal in $X$.
 \end{clm}
 
 \begin{proof}
  Suppose not, i.e there is $y \in X$ so that $C_1 \cap X \subseteq \{x \in [\omega]^\omega : y \subseteq^* x \}$. The set $ Y := \{ x \in X : x \subseteq^* y \}$ is $\Pi^1_1(y)$. We distinguish between two cases. 
  
  \begin{itemize}
   \item Case 1: $Y$ is thin. Then $Y \subseteq C_1(y)$. Moreover $\{\omega_1^z : z \in Y \}$ is unbounded in $\omega_1 = \omega_1^L$ (if $r$ is recursive such that $X = \{ x : r(x) \in \WO\}$ and $\delta$ bounds $\{\omega_1^z : z \in Y \}$, then $\{x : \|r(x)\| < \delta \}$ is a Borel subset of $X$ that is cofinal in $X$ which is impossible). But note that there is $\alpha < \omega_1$ large enough so that $L_\alpha [y] = L_\alpha$ and further $L_\beta [y] = L_\beta$ for any $\beta \geq \alpha$. So if $\omega_1^z > \alpha$ and $z \in Y$ then $z \in L_{\omega_1^z}[y] = L_{\omega_1^z}$. This is a contradiction to our assumption.  
   \item Case 2: $Y$ contains a perfect set $P$. By a theorem of Martin and Friedman (see \cite{M}), $P$ contains reals in any $\Delta^1_1$ degree above the degree of some $d \in 2^\omega$. The set $C_1$ is unbounded in the $\Delta^1_1$ degrees of $L$ (see \cite{YM1}) and is closed under $\Delta^1_1$ bi-reducibility. Thus there is $z \in P$ so that $z \in C_1$. Again we get a contradiction to our assumption.   
  \end{itemize}
 \end{proof}

 From now on we may assume that $X \subseteq C_1$. In the next step we will thin out $X$ even further. For each $x \in X$ let $\alpha_x < \omega_1^x$ be such that $x \in L_{\alpha_x}$ (this is possible as $x \in L_{\omega_1^x}$). Further let $r_x \colon [\omega]^\omega \to 2^\omega$ be recursive so that $\alpha_x = \| r_x(x) \|$. As there are only countably many recursive functions, there is one $r$ so that the set $\{ x \in X : r_x = r \}$ is cofinal in $X$. Fix such an $r$. Let $$Y := \{ x \in X : x \in L_{\| r(x) \|} \}. $$  
 
 $Y$ is a $\Pi^1_1$ cofinal subset of $X$. Thus let $s \colon [\omega]^\omega \to 2^\omega$ be a recursive function such that $Y = \{ x \in [\omega]^\omega : s(x) \in \WO \}$. We define the following well-order $\lhd$ on $Y$: $$x \lhd y \leftrightarrow \| s(x) \| < \| s(y) \| \vee (\| s(x) \| = \| s(y) \| \wedge x <_L y) $$
 
Let $\varphi_0(w,v)$ be a $\Sigma^1_1$ formula expressing that $(\omega,E_w)$ is properly embedable into $(\omega,E_v)$ and let $\varphi_1(w,v)$ be a $\Sigma^1_1$ formula expressing that $(\omega,E_w)$ is isomorphic to $(\omega,E_v)$. Moreover let $\psi(x,y)$ be a $\Sigma^1_1$ formula so that whenever $y \in Y$ and $x$ is arbitrary then $\psi(x,y)$ is equivalent to $x \in L_{\|r(y)\|} \wedge L_{ \|r(y)\|} \models x <_L y$. Let $\chi (x,y)$ be the $\Sigma^1_1$ formula $$\varphi_0(s(x),s(y)) \vee (\varphi_1(s(x),s(y)) \wedge \psi(x,y)).$$

We see that when $y \in Y$ then $$\chi(x,y) \leftrightarrow x \lhd y .$$ In particular, when $y \in Y$ then $\chi(x,y)$ implies that $x \in Y$. 

Finally we define $T := \{y \in Y : \forall x \lhd y (y \subseteq^* x) \}$. We have that $T$ is $\Pi^1_1$ as $y \in T$ iff $$y \in Y \wedge \forall x (\neg \chi(x,y) \vee y \subseteq^* x) .$$

$T$ is obviously a tower as the order $^*\supseteq$ on $T$ coincides with $\lhd$. $T$ is cofinal in $Y$ as for any $x \in Y$ if we let $y$ be $\lhd$ least in $Y$ so that $y \subseteq^* x$ then $y \in T$.

\end{proof}

\begin{theorem}
 The existence of a $\Sigma^1_2(x)$ ilt implies the existence of a $\Pi^1_1(x)$ ilt. 
\end{theorem}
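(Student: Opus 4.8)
The plan is to reduce the problem to the situation of the preceding theorem and then re-run its construction inside $L$. Since everything relativizes, I would first treat the lightface case: assume $X$ is a $\Sigma^1_2$ ilt and aim to produce a $\Pi^1_1$ ilt. By Theorem~\ref{thm:omega1} the hypothesis already yields $\omega_1 = \omega_1^L$, so, exactly as in the proof of Lemma~\ref{lem:destroysigma12tower}, I can write $X = \bigcup_{\xi<\omega_1} X_\xi$ as an increasing union of analytic sets $X_\xi$ whose codes lie in $L$; then $X \cap L = \bigcup_{\xi<\omega_1}(X_\xi \cap L)$.

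The first substantial step is to show that $X \cap L$ is again an ilt, now computed in $L$, and that it is cofinal in $X$. Cofinality is where the argument departs from the coanalytic case. For each $\xi$ the analytic set $X_\xi$ is a subchain of $X$, so by the Corollary to Theorem~\ref{thm:noanalyticmlt} it has a lower bound lying in $X$; hence the $\Sigma^1_2$ statement ``there is $w \in X$ with $w \subseteq^* y$ for all $y \in X_\xi$'' holds in $V$, and by Schoenfield absoluteness it holds in $L$. A witness $w_\xi \in X \cap L$ is, by upward absoluteness of the $\Pi^1_1$ statement ``$w_\xi$ is a lower bound of $X_\xi$'', an actual lower bound of $X_\xi$ in $V$; as the $X_\xi$ cover $X$, the family $\{ w_\xi : \xi<\omega_1 \} \subseteq X \cap L$ is cofinal in $X$. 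Linear orderedness of $X \cap L$ is immediate, and inextendibility in $L$ now bootstraps from cofinality: a pseudointersection $q \in L$ of $X \cap L$ would satisfy $q \subseteq^* y$ for every $y \in X \cap L$, and since $X \cap L$ is cofinal in the chain $X$ this makes $q$ a pseudointersection of $X$ in $V$, contradicting that $X$ is an ilt.

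With $X \cap L$ established as a cofinal sub-ilt I would work entirely inside $L$ and carry out the construction of the preceding theorem on $X \cap L$ to extract a $\Pi^1_1$ tower $T \subseteq X \cap L$ that is cofinal in $X \cap L$, hence in $X$. Because $T$ is cofinal in the inextendible chain $X$, any pseudointersection of $T$ would be a pseudointersection of $X$; thus $T$ has none and is a $\Pi^1_1$ maximal tower in $V$, in particular a $\Pi^1_1$ ilt. Relativizing the whole argument to a real parameter $x$ then gives the parameter version of the statement.

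The hard part will be this last step. The construction of the preceding theorem was performed for a \emph{coanalytic} ilt, and its reduction to the largest thin set $C_1$ (the claim that $C_1 \cap X$ is cofinal, together with the subsequent coding of ``$x \in X$'' inside $L_{\omega_1^x}$) exploited coanalyticity of $X$. Here $X \cap L$ is only $\Sigma^1_2$ in $L$, so that reduction must be re-examined. The perfect-set case survives unchanged: a perfect subchain contains reals in all sufficiently high $\Delta^1_1$-degrees by the theorem of Martin and Friedman, and since $C_1$ is unbounded in the $\Delta^1_1$-degrees of $L$ it meets such a set. The delicate case is a thin $\Sigma^1_2$ subchain that avoids $C_1$, where Mansfield--Solovay only gives containment in $L$ and not in $C_1$. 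I expect to resolve it by a reflection argument at the admissible ordinal $\omega_1^x$: for $x$ in the relevant cofinal part, a witness $w$ with $f(x,w) \in \WO$ can be found below $\omega_1^x$ (by $\Sigma^1_1$-reflection at $\omega_1^x$, once $x$ already appears in some $X_\xi$ with $\xi < \omega_1^x$), which at once forces a cofinal subfamily into $C_1$ and renders ``$x \in X$'' a $\Pi^1_1$ condition on that part — precisely what is needed to define $T$ coanalytically.
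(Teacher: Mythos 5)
The first half of your argument --- using Theorem~\ref{thm:omega1} to get $\omega_1=\omega_1^L$, decomposing $X$ into analytic pieces coded in $L$, pulling a lower bound for each piece into $X\cap L$ by Schoenfield and upward $\Pi^1_1$ absoluteness, and concluding that $X\cap L$ is a cofinal sub-ilt --- is essentially the paper's own first step. The problem is the second half. You propose to rerun the construction of the preceding theorem ($\Pi^1_1$ ilt $\to$ $\Pi^1_1$ maximal tower) on the set $X\cap L$, and you correctly identify that this requires turning the $\Sigma^1_2$ condition ``$x\in X$'' into a $\Pi^1_1$ condition on a cofinal part. But that conversion \emph{is} the entire content of the theorem, and your proposed resolution --- ``$\Sigma^1_1$-reflection at $\omega_1^x$'' producing a witness $w$ with $f(x,w)\in\WO$ below $\omega_1^x$ whenever $x\in X_\xi$ for some $\xi<\omega_1^x$ --- is not justified and does not obviously work. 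Schoenfield only places a witness for the $\Pi^1_1$ matrix $\varphi(x,w)$ somewhere in $L[x]$, with no bound at $\omega_1^x$; even the Gandy basis theorem, which yields a witness $w$ with $\omega_1^{x\oplus w}=\omega_1^x$, does not put $w$ into $L_{\omega_1^x}[x]$, and without such a bound the existential quantifier over $w$ does not become $\Pi^1_1$ via a Spector--Gandy type representation. Moreover there is no reason the subfamily of $x$ admitting such low witnesses should be cofinal. So the proof has a genuine gap exactly at its crux.

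The paper's solution is quite different and sidesteps reflection entirely: by $\Pi^1_1$ uniformization one may assume each $x\in X$ has a \emph{unique} witness $w$ with $\varphi(x,w)$, and one then codes the pair $(x,w)$ as a single set $y\subseteq\omega\times\omega$ whose $0$-th column is $x$ and whose $n$-th column is either $x\setminus x(n)$ or $x\setminus x(n+1)$, the choice encoding the bit $w(n)$. Membership in the resulting family $Y$ is $\Pi^1_1$ (the witness is now read off from $y$ rather than existentially quantified), the coding is monotone with respect to $\subseteq^*$ so $Y$ is still a linearly ordered family, and a pseudointersection of $Y$ would yield one of $X$. This produces the $\Pi^1_1$ ilt directly; extracting a maximal tower from it is then delegated to the preceding theorem, where coanalyticity is genuinely available. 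If you want to salvage your outline, you should replace the reflection step with a witness-coding device of this kind.
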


\begin{proof}
Let $X$ a $\Sigma^1_2$ ilt. As in the proof above we can show that $X \cap L$ is cofinal in $X$ (and this uses $\omega_1 = \omega_1^L$). So as $[\omega]^\omega \cap L$ is $\Sigma^1_2$ we may just assume that $X \subseteq L$. Let $\varphi(x,w)$ be $\Pi^1_1$ such that $x \in X$ iff $\exists w \varphi(x,w)$. Using $\Pi^1_1$ uniformization we can further assume that $x \in X$ iff $\exists! w \varphi(x,w)$. 




The idea will now be to get a linearly ordered tower that basically consists of $x \in X$ together with their unique witness $w$. To do this we have to introduce some notation. 

\begin{itemize}
 \item For $y \subseteq [\omega \times \omega]^\omega$, we write $y_n$ for $y$'s $n$'th vertical section. 
 \item For $x \in [\omega]^\omega$, we write $x(n)$ for the $n$'th element of $x$.
\end{itemize}

We now define the new ilt $Y$ which lives on $\omega \times \omega$. A set $y \in [\omega \times \omega]^\omega$ is in $Y$ iff the following hold true: 

\begin{enumerate}
 \item For every $n \geq 1$, $y_n = y_0 \setminus y_0(n)$ or $y_n = y_0 \setminus y_0(n+1)$. 
 \item If $w \in 2^\omega$ is such that $w(n) = \begin{cases}
                                                 0 \text{ if } y_{n+1} = y_0 \setminus y_0(n+1)\\
                                                 1 \text{ if } y_{n+1} = y_0 \setminus y_0(n+2)
                                                \end{cases}
$ then $\varphi(y_0,w)$ and in particular $y_0 \in X$. 
\end{enumerate}

\begin{enumerate}[(i)]
 \item $Y$ is $\Pi^1_1$: Checking whether $y \in [\omega \times \omega]^\omega$ is as described in (1) is $\Delta^1_1$. Checking whether for the function $w \in 2^\omega$ as in (2), $\varphi(y_0,w)$ holds true is $\Pi^1_1$.  
 \item $Y$ is linearly ordered by $\subseteq^*$: Let us note first that whenever $x \subsetneq^* y$ then eventually $x(n) > y(n)$. Why is this the case? As $x \subsetneq^* y$ (so $x \neq^* y$), there is a big enough $n \in \omega$ so that $\forall m \geq n (\vert y \cap x(m) \vert > m$. But this means that $x(m) > y(m)$ for all $m \geq n$. 
 
 Now let's assume that $x \neq y \in Y$ and without loss of generality that $x_0 \subsetneq^* y_0$. By the observation above there is an $n$ so that for every $m \geq n$, $x_0(m) > y_0(m)$ and $x_0(m) \in y_0$. But this also means that $\forall m \geq n$, $$x_m \subseteq x_0 \setminus x_0(m) \subseteq y_0 \setminus y_0(m+1) \subseteq y_m.$$ In particular $x_m \subseteq y_m$ for $m \geq n$. For $k < n$ we have that $x_k \subsetneq^* y_k$. Thus all together we have that $x \subsetneq^* y$. 
 
 \item $Y$ has no pseudointersection: Suppose $z$ is a pseudointersection of $Y$. If there is $n \in \omega$ so that $\vert z_n \vert = \omega$, then $z_n$ is a pseudointersection of $X$. Else let $x = \{ \min z_n : n \in \omega \wedge z_n \neq \emptyset \}$. It is easy to see that $x$ must be infinite (else $z$ would not be $\subseteq^*$ below any member of $Y$). We claim that $x$ is a pseudointersection of $X$. Namely let $y_0 \in X$ be arbitrary where $y \in Y$. As $z \subseteq^* y$, there is an $n$ so that $\forall m \geq n ( z_m \neq \emptyset \rightarrow (m,\min z_m) \in y )$. This means in particular that $\forall m \geq n (z_m \neq \emptyset \rightarrow \min z_m \in y_0)$.

\end{enumerate}

\end{proof}

\begin{theorem}
 \label{thm:bigequivalence}
 The following are equivalent:
 \begin{enumerate}
  \item There is a $\Sigma^1_2(x)$ ilt.
  \item There is a $\Pi^1_1(x)$ ilt.
  \item There is a $\Pi^1_1(x)$ maximal tower.
  \item There is a $\Sigma^1_2(x)$ maximal tower.
 \end{enumerate}
 
 \end{theorem}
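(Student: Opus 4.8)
The plan is to prove the four statements equivalent by closing the cycle of implications $(1) \Rightarrow (2) \Rightarrow (3) \Rightarrow (4) \Rightarrow (1)$, where the two substantive arrows are supplied verbatim by the two theorems immediately preceding this one and the remaining two are elementary. For $(1) \Rightarrow (2)$ I would simply invoke the preceding theorem, which asserts that the existence of a $\Sigma^1_2(x)$ ilt implies the existence of a $\Pi^1_1(x)$ ilt. For $(2) \Rightarrow (3)$ I would invoke the theorem stating that every $\Pi^1_1(x)$ ilt contains a $\Pi^1_1(x)$ maximal tower; in particular, from a $\Pi^1_1(x)$ ilt one obtains a $\Pi^1_1(x)$ maximal tower, giving the conclusion of $(3)$.

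The remaining two implications are bookkeeping. For $(3) \Rightarrow (4)$ it suffices to recall the pointclass inclusion $\Pi^1_1(x) \subseteq \Delta^1_2(x) \subseteq \Sigma^1_2(x)$: a $\Pi^1_1(x)$ definition $\forall y\,\psi$ of a set is turned into a $\Sigma^1_2(x)$ definition of the same set by prefixing a dummy existential quantifier, so any $\Pi^1_1(x)$ maximal tower is \emph{a fortiori} a $\Sigma^1_2(x)$ maximal tower. For $(4) \Rightarrow (1)$ I would observe that a maximal tower is a special instance of an ilt. Indeed, by Definition~\ref{max_tower} a tower is well ordered by the relation $x \leq y \Leftrightarrow y \subseteq^* x$, and a well order is in particular a linear order, so any two of its elements are $\subseteq^*$-comparable; this is exactly the requirement that it be linearly ordered with respect to $\subseteq^*$. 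Maximality of the tower means precisely that it has no pseudointersection. Hence the two defining clauses of an ilt hold, and a $\Sigma^1_2(x)$ maximal tower is in particular a $\Sigma^1_2(x)$ ilt, yielding $(1)$.

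I do not expect a genuine obstacle here, since the two nontrivial steps have already been established in the preceding theorems and the other two are immediate from the pointclass hierarchy and the definitions. The only point that warrants a moment's care is the verification that the definition of a (maximal) tower really subsumes the linear-order clause of an ilt: the tower is defined as well ordered by the \emph{reverse} relation $y \subseteq^* x$, but comparability is insensitive to the direction of the relation, so well-orderedness by this relation does entail that the set is linearly ordered with respect to $\subseteq^*$ in the sense required for an ilt. With this noted, the cycle closes and all four statements are equivalent.
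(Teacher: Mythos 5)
Your proposal is correct and follows exactly the paper's own proof: the authors likewise cite the two preceding theorems for $(1)\rightarrow(2)\rightarrow(3)$ and dismiss $(3)\rightarrow(4)\rightarrow(1)$ as trivial from the definitions. Your extra care in checking that a maximal tower is indeed an ilt (well-order implies linear order, maximality means no pseudointersection) is a fine elaboration of what the paper leaves implicit.
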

 
 \begin{proof}
  We have shown above that $(1) \rightarrow (2) \rightarrow (3)$. $(3) \rightarrow (4) \rightarrow (1)$ are trivial from the definitions. 
 \end{proof}


\begin{thebibliography}{00}

 \bibitem{BD}
 J. Baumgartner, P. Dordal
{\emph{Adjoining dominating functions.}}
J. Symbolic Logic 50 (1985), no. 1, 94-101.


\bibitem{BFK1}
J. Brendle, V. Fischer, Y. Khomskii
{\emph{Definable maximal independent families}}
accepted at the Transactions of the American Mathematical Society. 

\bibitem{BK1}
J. Brendle, Y. Khomskii
{\emph{Mad families constructed from perfect almost disjoint families.}}
J. Symbolic Logic 78 (2013), no. 4, 1164-1180. 

\bibitem{FFK1}
V. Fischer, S. D. Friedman, Y. Khomskii
{\emph{Co-analytic mad families and definable wellorders.}}
Arch. Math. Logic 52 (2013), no. 7-8, 809-822. 

\bibitem{FS1}
V. Fischer, D. Schrittesser
{\emph{Sacks indestructible eventually different families.}}
Submitted.


\bibitem{FST1}
V. Fischer, D. Schrittesser, A. T\"ornquist
{\emph{A co-analytic Cohen-indestructible maximal cofinitary group.}}
J. Symb. Log. 82 (2017), no. 2, 629-647. 

\bibitem{FT1}
V. Fischer, A. T\"ornquist
{\emph{A co-analytic maximal set of orthogonal measures.}}
Journal of Symbolic Logic, 75, 4, pp. 1403-1414.


\bibitem{HS1}
H. Horowitz, S. Shelah
{\emph{A Borel maximal cofinitary group.}}
arXiv:1610.01344.

\bibitem{TJ1}
T. Jech
{\emph{Set theory.}}
The third millennium edition, revised and expanded. Springer Monographs in Mathematics. Springer-Verlag, Berlin, 2003. xiv+769 pp. ISBN: 3-540-44085-2   

  
\bibitem{AKA}
 A. Kanamori
{\emph{The higher infinite.
Large cardinals in set theory from their beginnings.}} Second edition. Paperback reprint of the 2003 edition. Springer Monographs in Mathematics. Springer-Verlag, Berlin, 2009.

\bibitem{BAK1}
B. Kastermans
{\emph{The complexity of maximal cofinitary groups.}}
Proc. Amer. Math. Soc. 137 (2009), no. 1, 307-316. 

\bibitem{AK}
 A. Kechris
{\emph{Classical descriptive set theory.}}
Graduate Texts in Mathematics, 156. Springer-Verlag, New York, 1995.


\bibitem{M}
D. Martin
{\emph{Proof of a conjecture of Friedman.}}
Proc. Amer. Math. Soc. 55 (1976), no. 1, 129. 

\bibitem{ADM1}
A. R. D. Mathias
{\emph{Happy families.}}
Ann. Math. Logic 12 (1977), no. 1, 59-111.



\bibitem{AM}
A. Miller
{\emph{Descriptive set theory and forcing.
How to prove theorems about Borel sets the hard way.}} Lecture Notes in Logic, 4. Springer-Verlag, Berlin, 1995.

\bibitem{AM1}
A. Miller
{\emph{Infinite  combinatorics  and  definability.}}
Annals of Pure and Applied Logic, vol. 41 (1989), 179-203.

\bibitem{YM1}
Y. Moschovakis
{\emph{Descriptive set theory.}}
Second edition. Mathematical Surveys and Monographs, 155. American Mathematical Society, Providence, RI, 2009. xiv+502 pp. ISBN: 978-0-8218-4813-5 

\bibitem{RS1}
D. Raghavan, S. Shelah
{\emph{Comparing the closed almost disjointness and dominating numbers.}}
Fund. Math. 217 (2012), no. 1, 73-81. 


\bibitem{JS1}
J. Schilhan
{\emph{Coanalytic ultrafilter bases.}}
In preparation. 

\bibitem{DS1}
D. Schrittesser
{\emph{Compactness of maximal eventually different families.}}
Bull. Lond. Math. Soc. 50 (2018), no. 2, 340-348. 

\bibitem{AT1}
A. T\"ornquist
{\emph{Definability and almost disjoint families.}} 
Adv. Math. 330 (2018), 61-73. 

\bibitem{AT2}
A. T\"ornquist
{\emph{$\Sigma^1_2$ and $\Pi^1_1$ mad families.}}
J. Symbolic Logic 78 (2013), no. 4, 1181-1182. 








\end{thebibliography}
\end{document}